\newtheorem{theorem}{Theorem}[section]
\newtheorem{proposition}[theorem]{Proposition}
\newtheorem{lemma}[theorem]{Lemma}
\newtheorem{corollary}[theorem]{Corollary}
\theoremstyle{definition}
\newtheorem{example}[theorem]{Example}
\numberwithin{equation}{section}
\newcommand\FX{F\langle X\rangle}
\newcommand\NN{\mathbb{N}}
\newcommand\oA{\overline{A}}
\newcommand\ve{\varepsilon}
\begin{document}

\title[Jordan homomorphisms and T-ideals]{Jordan homomorphisms and T-ideals}

\author{Matej Brešar} 
\author{Efim Zelmanov}
\address{Faculty of Mathematics and Physics, University of Ljubljana \&
Faculty of Natural Sciences and Mathematics, University of Maribor \& IMFM, Ljubljana, Slovenia}
\email{matej.bresar@fmf.uni-lj.si}
\address{SICM, Southern University of Science and Technology,
Shenzhen, China}
\email{efim.zelmanov@gmail.com}

\thanks{The first author was partially supported by the ARIS Grants P1-0288 and J1-60025.  
The second author was partially supported by the NSFC Grant 12350710787 and the Guangdong Program 2023JC10X085.
}

\begin{abstract}
    Let $A$ and $B$ be associative algebras over a field $F$ with {\rm char}$(F)\ne 2$.
    Our first main result states that if $A$ is unital and equal to its commutator ideal,
    then every Jordan epimorphism
    $\varphi:A\to B$ is the 
     sum of a homomorphism and an antihomomorphism. Our second main result concerns (not necessarily surjective)  Jordan homomorphisms  from
     $H(A,*)$ to $B$, where $*$ is an involution
     on $A$ and $H(A,*)=\{a\in A\,|\, a^*=a\}$.
We show that there exists
a ${\rm T}$-ideal $G$ having the following two properties:
(1) the Jordan homomorphism
$\varphi:H(G(A),*)\to B$ can be extended to an (associative) homomorphism, subject to the condition  that the subalgebra generated by $\varphi(H(A,*))$
    has trivial annihilator, and
    (2)
 every element of the ${\rm T}$-ideal of identities of the algebra of $2\times 2$ matrices is nilpotent modulo $G$. A similar statement is true for Jordan homomorphisms from $A$ to $B$. A counter-example shows that the assumption on trivial annihilator cannot be removed.
\end{abstract}

\subjclass[2020]{16W10, 16W20, 16R10, 16R60, 17C05, 17C50}

\maketitle

\section{Introduction}

Let $A$ and $B$ be associative algebras over a field $F$ with char$(F)\ne 2$. Recall that a linear subspace $J$ of $A$  that is closed under the Jordan product $x\circ y=xy+yx$ is called a special Jordan algebra. 
A linear mapping $\varphi:J\to B$ is called a {\bf Jordan homomorphism}
if
\begin{equation}
\label{j1}\varphi(x\circ y)=\varphi(x)\circ \varphi(y)
\end{equation}
for all $x,y\in J$. We remark that
 while   $J$ can be any special Jordan algebra in this definition, the most studied and arguably most important cases are  when $J=A$  and $J=H(A,*)$,  the latter being the set of symmetric elements in  the algebra $A$ with an involution $*:A\to A$. We will also confine ourselves to these two cases.

 The restrictions of (associative) homomorphisms and antihomomorphisms  $A\to B$ to $J$ are obviously Jordan homomorphisms. Finding conditions under which a Jordan homomorphism $J\to B$ arises from
 homomorphisms and antihomomorphisms is a classical problem that originated in the 1940-50s in the works of G.\ Ancochea \cite{An1, An2}, I.\ Kaplansky \cite{Kap}, L.-K.\ Hua \cite{Hua}, N.\ Jacobson and C.\ E.\ Rickart \cite{JR, JR2}, 
 R.\ V.\ Kadison \cite{K}, and
  I.\ N.\ Herstein \cite{Her}.
This problem is natural from the point of view of the Jordan theory, and is independently important since Jordan homomorphisms naturally appear in a variety of mathematical topics. 

For more information about the history and background of the study of Jordan homomorphisms we
 refer the reader to \cite{B, FIbook, Mc, Mol}. In the next few lines we 
 will  briefly describe the results from two papers (\cite{Mc} and \cite{B}) that are relevant for our work. 

In \cite{Z}, the second author constructed the so called ``tetrad-eating'' T-ideal $T$ of the free Jordan algebra. In \cite{Mc}, 
K. McCrimmon showed that if $J$ is a unital  special Jordan algebra such that
$J=T(J)$, then all Jordan homomorphisms $J\to B$ are of the usual form.

In \cite{B}, the first author proved that if a Jordan homomorphism $\varphi:A\to B$
has the property
that  the ideal  generated by all $\varphi(xy)-\varphi(x)\varphi(y)$, $x,y\in A$, has trivial intersection with the ideal generated by 
 all $\varphi(xy)-\varphi(y)\varphi(x)$, $x,y\in A$ (such a Jordan homomorphism is called {\bf splittable}), then the restriction of $\varphi$
 to the {\bf commutator ideal} $K(A)$ (= the ideal generated by all commutators $[x,y]=xy-yx$, $x,y\in A)$
 has the usual form. The restriction to $K(A)$ is justified by examples.

We will extend the results from \cite{Mc} and \cite{B} in two directions. To describe our results, we first introduce some terminology.

 We will refer to Jordan homomorphisms having the usual form as ``standard''. To define this precisely, we separate the case where $J=A$ and the case where $J=H(A,*)$.
 \begin{enumerate}
     \item[(a)] A Jordan homomorphism $\varphi:A\to B$  is {\bf standard} if there exist a homomorphism $\varphi_1:A\to B$ and an antihomomorphism $\varphi_2:A\to B$ such that \begin{equation*} \label{ha0}   \varphi=\varphi_1+\varphi_2\quad\mbox{and}\quad  \varphi_1(A)\varphi_2(A)= \varphi_2(A)\varphi_1(A)=(0). \end{equation*}
     \item[(b)] 
  A Jordan homomorphism $\varphi:H(A,*)\to B$  is {\bf standard}   
if it can be extended to a homomorphism from $A$ to $B$.
 \end{enumerate}
 (Note that if $\Phi:A\to B$ is an  antihomomorphism, then $x\mapsto \Phi(x^*)$ is a homomorphism that coincides with $\Phi$ on $H(A,*)$. That is why there are no antihomomorphisms in (b).)

In Section \ref{sec2} we show
  that if $A$ is any unital algebra such that $A=K(A)$, then every Jordan homomorphism from 
$A$ onto an arbitrary  algebra $B$ is standard (Theorem \ref{mt1}). Thus, unlike in \cite{B}, no condition on $\varphi$ is required. 
Also,  condition $A=T(A)$ from \cite{Mc} obviously
implies $A=K(A)$, but not vice-versa. Moreover,
 the final result of Section \ref{sec2} (Corollary \ref{cfin})
gives some evidence that  condition $A=K(A)$ is natural and in some sense optimal. We also mention that our method of proof in Section \ref{sec2} is different from those in  \cite{B} and \cite{Mc}.

 For a subset $S$ of $A$, we write
 $\langle S\rangle$ for the
(associative) subalgebra of $A$
generated by $S$. 
In Section \ref{sec3} we prove the existence of a T-ideal $G$
such that every element of the ${\rm T}$-ideal of identities of the algebra of $2\times 2$ matrices is nilpotent modulo $G$,
and every Jordan homomorphism $A\to B$ (resp.\ $H(A,*)\to B$)
is standard on $G(A)$ (resp.\
$H(G(A),*)$), provided that the subalgebra $\langle \varphi(A)\rangle$ (resp.\ $\langle \varphi(H(A,*))\rangle$)  has trivial annihilator (Theorem \ref{te}). The latter restriction on $\varphi$ is very mild and easy to handle  compared to the restriction from
\cite{B} that $\varphi$ is splittable, and, moreover, we show that it is necessary (Proposition \ref{pez}). 
Of course, the  
main difference and advantage when compared with the result from \cite{Mc} is that $G(A)$ 
(resp.\ $H(G(A),*)$) is not assumed to be equal to $A$ (resp.\ $H(A,*)$). 
As in \cite{Mc}, the main idea of the proof is to use the result from \cite{Z}. However, the realization of this  idea is somewhat different.

\section{Jordan homomorphisms on algebras equal to their commutator ideals}\label{sec2}

In this section,
we consider a Jordan homomorphism $\varphi:A\to B$ where 
$A$ and $B$ are associative algebras over a field $F$.

Recall our convention that char$(F)\ne 2$. We assume this without further mention.  
In some of the 
results---specifically, in Lemmas \ref{le2}, \ref{le3}, \ref{lef1}, \ref{ts}, and \ref{lbid}---this assumption actually is not needed. Here it should be mentioned that
a Jordan homomorphism between algebras 
over a field with characteristic $2$ is defined as a linear map $\varphi$ satisfying two conditions,
\begin{equation}
\label{jhtri}\varphi(x^2)=\varphi(x)^2\quad\mbox{and}\quad\varphi(xyx)=\varphi(x)\varphi(y)\varphi(x).
\end{equation}
In our setting where char$(F)\ne 2$  the first condition is readily equivalent to \eqref{j1} and implies the second condition since $xyx= \frac{1}{2}(x\circ (y\circ x) - x^2\circ y)$.

We will now record a few 
elementary observations regarding any Jordan homomorphism $\varphi:A\to B$. First, 
from \eqref{jhtri} we infer that $$\varphi(xyz+zyx)=\varphi(x)\varphi(y)\varphi(z) +\varphi(z)\varphi(y)\varphi(x)\quad\mbox{and}\quad \varphi(y^i)=\varphi(y)^i$$  for all  $i\ge 1$. 
Hence, $$\varphi(y^iwy^j + y^jwy^i)=\varphi(y)^i\varphi(w)\varphi(y)^j + \varphi(y)^j\varphi(w)\varphi(y)^i$$
and therefore
\begin{equation}\label{ene}
    \varphi\left(\sum_{i=0}^{m} y^i w y^{m-i}\right)=\sum_{i=0}^{m} \varphi(y)^i \varphi(w) \varphi(y)^{m-i}
\end{equation}
for every $m\in\NN$. 

Another standard equation,
\begin{equation}\label{deve}
    \varphi([x,[u,v]])=
    [\varphi(x),[\varphi(u),\varphi(v)]], 
\end{equation} is an immediate  consequence of 
 \begin{equation}\label{uava}
     [x,[u,v]] = (x\circ u)\circ v - (x\circ v)\circ u.
 \end{equation}

 Take $u,v\in A$ and write
     \begin{equation}\label{abuv}a=\varphi(uv)-\varphi(u)\varphi(v)\quad\mbox{and}\quad b=\varphi(uv)-\varphi(v)\varphi(u).
     \end{equation}
        The following two equations are  less obvious but also well known:
\begin{equation}\label{abba}
    ab=ba=0
\end{equation}
and
\begin{equation}\label{abba2}
    a\varphi(x)b+b\varphi(x)a=0
\end{equation}
for every $x\in A$. The first one was discovered in \cite{JR} and the second one in \cite{Sm}.

\begin{lemma}\label{le2}
   Let $\varphi:A\to B$ be a Jordan homomorphism.
    Let $u,v\in A$ and write $y=[u,v]$,
    $\tilde{y}= [\varphi(u),\varphi(v)]$. Then
    $$\varphi([x,y^n])= [\varphi(x),\varphi(y)^{n-1}\tilde{y}]$$ for all $x\in A$ and all $n\in\NN\setminus{\{2\}}$.
\end{lemma}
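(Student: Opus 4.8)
The plan is to handle $n=1$ at once and reduce the case $n\ge 3$ to a purely associative identity. For $n=1$ the assertion is just \eqref{deve}: since $y=[u,v]$ we get $\varphi([x,y])=\varphi([x,[u,v]])=[\varphi(x),[\varphi(u),\varphi(v)]]=[\varphi(x),\tilde{y}]$, reading $\varphi(y)^{0}\tilde{y}$ as $\tilde{y}$. So assume $n\ge 3$. Starting from the telescoping commutator identity $[x,y^{n}]=\sum_{i=0}^{n-1}y^{i}[x,y]y^{n-1-i}$ and applying $\varphi$, formula \eqref{ene} (with $m=n-1$ and $w=[x,y]$) together with \eqref{deve} gives
\[
\varphi([x,y^{n}])=\sum_{i=0}^{n-1}\varphi(y)^{i}\,[\varphi(x),\tilde{y}]\,\varphi(y)^{n-1-i}.
\]
Writing $p:=\varphi(y)$, it thus suffices to prove the identity $\sum_{i=0}^{n-1}p^{i}[\varphi(x),\tilde{y}]p^{n-1-i}=[\varphi(x),p^{n-1}\tilde{y}]$ in $B$.

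Next I would bring in the elements $a,b$ from \eqref{abuv} associated with the same pair $u,v$, so that \eqref{abba} and \eqref{abba2} apply. Since $\varphi(u)\varphi(v)+\varphi(v)\varphi(u)=\varphi(u\circ v)$, a one-line computation shows $p=a+b$ and $\tilde{y}=b-a$; then $ab=ba=0$ yields, by induction, $p^{k}=a^{k}+b^{k}$ and $p^{k}\tilde{y}=\tilde{y}\,p^{k}=b^{k+1}-a^{k+1}$ for all $k\ge 1$. In particular $\tilde{y}$ commutes with every power of $p$. Using this, the left-hand side of the identity to be proved rewrites as $[S,\tilde{y}]$ with $S:=\sum_{i=0}^{n-1}p^{i}\varphi(x)\,p^{n-1-i}$, and a short manipulation turns the whole identity into
\[
\bigl(S-\varphi(x)p^{n-1}\bigr)\tilde{y}=\tilde{y}\bigl(S-p^{n-1}\varphi(x)\bigr).
\]

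The heart of the argument is to expand both sides of this last equation using $p^{k}=a^{k}+b^{k}$ and $p^{k}\tilde{y}=\tilde{y}p^{k}=b^{k+1}-a^{k+1}$. Each resulting monomial is either a ``pure'' one, $a^{i}\varphi(x)a^{n-i}$ or $b^{i}\varphi(x)b^{n-i}$, or a ``mixed'' one, $a^{i}\varphi(x)b^{n-i}$ or $b^{i}\varphi(x)a^{n-i}$, with $1\le i\le n-1$. By \eqref{abba2} one has $a\varphi(x)b=-b\varphi(x)a$, so each mixed monomial equals, up to sign, one in which the two outer factors have swapped colours; because $n\ge 3$, one of those outer powers then abuts a factor of the opposite colour and the monomial dies by $ab=ba=0$. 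This is exactly where $n\ne 2$ is used: for $n=2$ a residual term of the shape $a\varphi(x)b$ remains, and in fact the lemma is false for $n=2$. Once all mixed monomials are discarded, both sides reduce to $\sum_{i=1}^{n-1}\bigl(b^{i}\varphi(x)b^{n-i}-a^{i}\varphi(x)a^{n-i}\bigr)$, which completes the proof.

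The only genuine difficulty is the bookkeeping in this last step: enumerating the monomials that occur, verifying that the mixed ones vanish precisely when $n\ge 3$, and observing that the two surviving pure sums agree after the reindexing $i\mapsto n-i$. One should also keep in mind that $B$ need not be unital, so the boundary terms $i=0$ and $i=n-1$ (those involving ``$p^{0}$'') must be read as $\varphi(x)p^{n-1}$ and $p^{n-1}\varphi(x)$ rather than handled by adjoining an identity.
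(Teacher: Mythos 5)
Your proposal is correct and follows essentially the same route as the paper: reduce via \eqref{ene} and \eqref{deve} to a sum of conjugated commutators, pass to the elements $a,b$ of \eqref{abuv} with $\varphi(y)=a+b$ and $\tilde{y}=b-a$, and kill the mixed monomials $a^{j}\varphi(x)b^{n-j}$, $b^{j}\varphi(x)a^{n-j}$ using \eqref{abba} and \eqref{abba2}, which is exactly where $n\ge 3$ enters. Your intermediate repackaging of the target identity as $[S,\tilde{y}]=[\varphi(x),p^{n-1}\tilde{y}]$ (using that $\tilde{y}$ commutes with powers of $p$) is a harmless cosmetic variation on the paper's direct expansion and cancellation.
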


\begin{proof} 
By \eqref{ene}, we have
\begin{equation}\label{beg}
    \varphi([x,y^n])=\varphi\left(
\sum_{i=0}^{n-1} y^i[x,y]y^{n-i-1}
\right)=\sum_{i=0}^{n-1} \varphi(y)^i\varphi([x,y])\varphi(y)^{n-i-1}.
\end{equation}
From \eqref{deve} we see that
$\varphi([x,y])= [\varphi(x),\tilde{y}]$, i.e.,
 the assertion of the lemma is true for $n=1$. Hence, \eqref{beg} becomes
\begin{equation}\label{fr}
\varphi([x,y^n])=
\sum_{i=0}^{n-1} \varphi(y)^i[\varphi(x),\tilde{y}]\varphi(y)^{n-i-1}.
\end{equation}

Using the notation introduced in \eqref{abuv}, we have
\begin{align*}
    b-a&= [\varphi(u),\varphi(v)] =\tilde{y},\\b+a &= 2\varphi(uv) - \varphi(u\circ v)=\varphi(y).
\end{align*}
In light of  \eqref{abba}, we see that 
 \begin{align*}
     & \varphi(y)^k = (b+a)^k=b^k + a^k,\\
    &\varphi(y)^k\tilde{y}=\tilde{y}\varphi(y)^k= (b^k+a^k)(b-a)=(b-a)(b^{k}+a^k)= b^{k+1}-a^{k+1} 
 \end{align*}
 for every $k\ge 1$.

Assume now that $n\ge 3$. We 
can rewrite \eqref{fr} as
\begin{align*}
\varphi([x,y^n])=& [\varphi(x),(b-a)](b^{n-1}+a^{n-1}) \\&+ \sum_{i=1}^{n-2} (b^i+ a^i)[\varphi(x),(b-a)] (b^{n-i-1}+ a^{n-i-1})\\
&+(b^{n-1}+a^{n-1})[\varphi(x),(b-a)]\\
=&\varphi(x)(b^n-a^n) - (b-a)\varphi(x)(b^{n-1}+ a^{n-1})\\&+\sum_{i=1}^{n-2} (b^i + a^i)\varphi(x)(b^{n-i} 
- a^{n-i}) -
\sum_{i=1}^{n-2} (b^{i+1} - a^{i+1})\varphi(x)(b^{n-i-1}+ a^{n-i-1})\\ &+ (b^{n-1}+a^{n-1})\varphi(x)(b-a) - (b^n-a^n)\varphi(x).
\end{align*}
Observe that \eqref{abba} and \eqref{abba2} imply 
$$b^j\varphi(x)a^{n-j}= a^j\varphi(x)b^{n-j}=0,\quad 1\le j\le n-1.$$  Using this together with $b^n-a^n= \varphi(y)^{n-1}\tilde{y}$, it follows that
\begin{align*}
    &\varphi([x,y^n])- [\varphi(x),\varphi(y)^{n-1}\tilde{y}]
    \\= &a\varphi(x)a^{n-1} -\sum_{i=1}^{n-2} a^i\varphi(x)a^{n-i} + \sum_{i=1}^{n-2} a^{i+1}\varphi(x)a^{n-i-1} -a^{n-1}\varphi(x)a
    \\& - b\varphi(x)b^{n-1} +\sum_{i=1}^{n-2}b^i\varphi(x)b^{n-i} - \sum_{i=1}^{n-2} b^{i+1}\varphi(x)b^{n-i-1} + b^{n-1}\varphi(x)b\\
    =&0,
\end{align*}
as desired.
\end{proof}

We continue by introducing an auxiliary  notion that will naturally appear in what follows. We will say that a Jordan homomorphism 
$\varphi:A\to B$ is {\bf nearly-standard} if there exists
mappings $\varphi_1,\varphi_2:A\to B$ such that
\begin{equation}\label{ha}\varphi=\varphi_1+\varphi_2\quad\mbox{and}\quad\varphi(xy)=\varphi(x)\varphi_1(y) + \varphi_2(y)\varphi(x)\end{equation}
for all $x,y\in A$ (we are not assuming that $\varphi_1$ is  a homomorphism  and $\varphi_2$ 
 an antihomomorphism). 
 
 Observe that a standard Jordan homomorphism is nearly-standard. 
If $A$ is commutative, then
every Jordan homomorphism on $A$ is automatically nearly-standard (with $\varphi_1=\varphi_2=\frac{\varphi}{2}$), but 
not always standard, as we will see in Example \ref{snx}.
An example of a noncommutative algebra $A$ and a non-standard but nearly-standard Jordan homomorphism $\varphi:A\to B$  will be given in 
Example \ref{examb}.

\begin{lemma}
 \label{ts0}
     If an algebra $A$ is equal to the linear span of  its commutators and their 
    $n$th powers for $n\ge 3$, then
     every Jordan homomorphism $\varphi:A\to B$, where $B$ is any $F$-algebra, is nearly-standard.
\end{lemma}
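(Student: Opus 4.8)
The plan is to manufacture linear maps $\varphi_1,\varphi_2\colon A\to B$ with $\varphi=\varphi_1+\varphi_2$ and $\varphi(xy)=\varphi(x)\varphi_1(y)+\varphi_2(y)\varphi(x)$ by prescribing their values on a suitable basis of $A$ and then extending the near-standard identity to all $y$ by linearity. The one computational input I would establish first is the following formula. Given $u,v\in A$, write $a=a_{u,v}$ and $b=b_{u,v}$ as in \eqref{abuv} (so $ab=ba=0$ by \eqref{abba}); then for every $n\in\NN\setminus\{2\}$ and every $x\in A$,
\begin{equation*}
\varphi\!\left(x\,[u,v]^n\right)=\varphi(x)\,b^{\,n}+a^{\,n}\,\varphi(x)
\qquad\text{and}\qquad
\varphi\!\left([u,v]^n\right)=a^{\,n}+b^{\,n}.
\end{equation*}

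To prove this, put $y=[u,v]$ and $\tilde y=[\varphi(u),\varphi(v)]$, and recall from the proof of Lemma \ref{le2} the identities $\varphi(y)=a+b$, $\tilde y=b-a$, $\varphi(y)^k=a^k+b^k$ and $\varphi(y)^{k}\tilde y=b^{k+1}-a^{k+1}$. Writing $x\,y^n=\tfrac12\big([x,y^n]+x\circ y^n\big)$, the bracket term is evaluated by Lemma \ref{le2} (by \eqref{deve} when $n=1$) as $[\varphi(x),\,b^{n}-a^{n}]$, while the Jordan term is $\varphi(x)\varphi(y^n)+\varphi(y^n)\varphi(x)=\varphi(x)(a^n+b^n)+(a^n+b^n)\varphi(x)$ by \eqref{j1} and $\varphi(y^n)=\varphi(y)^n$. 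Adding the two and dividing by $2$ gives the first identity; the second is just $\varphi(y^n)=\varphi(y)^n=(a+b)^n=a^n+b^n$.

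Now the construction. By hypothesis the set $S=\{[u,v]^n : u,v\in A,\ n\in\NN\setminus\{2\}\}$ spans $A$, so it contains a basis $\mathcal B$ of $A$. For each $w\in\mathcal B$ I would fix one representation $w=[u_w,v_w]^{\,n_w}$ with $n_w\in\NN\setminus\{2\}$, set $\varphi_1(w)=b_{u_w,v_w}^{\,n_w}$ and $\varphi_2(w)=a_{u_w,v_w}^{\,n_w}$, and extend $\varphi_1,\varphi_2$ linearly to all of $A$. The second identity above gives $\varphi_1(w)+\varphi_2(w)=\varphi(w)$ for each $w\in\mathcal B$, hence $\varphi_1+\varphi_2=\varphi$ on $A$. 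For the product relation, note that for each fixed $x\in A$ both sides of $\varphi(xy)=\varphi(x)\varphi_1(y)+\varphi_2(y)\varphi(x)$ are linear in $y$, so it suffices to verify it for $y=w\in\mathcal B$ — where it is exactly the first identity above. Hence it holds for all $x,y\in A$, and $\varphi$ is nearly-standard.

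The delicate point — and the reason for passing to a basis rather than defining $\varphi_1,\varphi_2$ outright — is well-definedness: the assignment $[u,v]\mapsto b_{u,v}$, though bilinear in $(u,v)$, need not descend to the linear span of commutators, since a relation $\sum_i[u_i,v_i]=0$ forces only $\sum_i[\varphi(u_i),\varphi(v_i)]$ to be central in $\langle\varphi(A)\rangle$ (apply \eqref{deve}), not to vanish. Fixing a basis of $A$ inside $S$ together with one representation of each basis element circumvents this cleanly, and everything else is the computation summarised above, with Lemma \ref{le2} (and the relations $ab=ba=0$, $\varphi(y^n)=\varphi(y)^n$) doing the real work.
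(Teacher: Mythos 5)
Your proof is correct and follows essentially the same route as the paper's: both rest on Lemma \ref{le2} together with the decomposition $xy=\tfrac12(x\circ y+[x,y])$, and your $\varphi_1(w)=b^{n_w}$, $\varphi_2(w)=a^{n_w}$ coincide with the paper's $\tfrac12(\varphi+\psi)$, $\tfrac12(\varphi-\psi)$ on the spanning set. The only difference is presentational: you spell out the well-definedness issue by fixing a basis inside the spanning set, whereas the paper sidesteps it by only asserting the existence of a \emph{function} $\psi$ with $\varphi([x,y])=[\varphi(x),\psi(y)]$ (linearity of $\varphi_1,\varphi_2$ not being required in the definition of nearly-standard).
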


\begin{proof} From
    Lemma \ref{le2} we see that our assumption on $A$ implies 
  that there is a function
   $\psi:A\to B$ such that
   $$\varphi([x,y])=[\varphi(x),\psi(y)]$$ for all $x,y\in A$. Since $xy=\frac{1}{2}x\circ y + \frac{1}{2}[x,y]$, $$\varphi_1 =\frac{1}{2}(\varphi + \psi)\quad\mbox{and}\quad \varphi_2 =\frac{1}{2}(\varphi - \psi)$$ satisfy \eqref{ha}.
   \end{proof}

Our next goal is to replace the assumption  on $A$ in Lemma \ref{ts0} by a more 
transparent assumption that $A$ is unital and  equal to its commutator ideal $K(A)$ (Proposition \ref{pp}). To this end, we need two  lemmas.

\begin{lemma}\label{le3}
    Let  $n\in\NN$ be such that {\rm char}$(F)$ 
    does not divide $n$.
    If a linear subspace $V$ of an algebra $A$ contains all commutators and their $n$th powers, then $V$ contains the ideal of 
    $A$ generated by elements $$[u,\sum_{\sigma\in S_{n-1}} [u,v_{\sigma(1)}]\cdots [u,v_{\sigma(n-1)}]],$$ $u,v_1,\dots,v_{n-1}\in A$. 
\end{lemma}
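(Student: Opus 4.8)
The plan is the following. First I would reduce to the case where $F$ is infinite: passing to $\ov F$ (or any infinite extension) and to $A\otimes_F\ov F$, $V\otimes_F\ov F$, one checks that $V\otimes_F\ov F$ still contains all commutators and their $n$th powers, that the ideal generated over $\ov F$ by the displayed elements contains the ideal generated over $F$, and that $(V\otimes_F\ov F)\cap A=V$; so the statement over $\ov F$ implies the one over $F$. Over an infinite field, a linear subspace containing all the values of a polynomial map $F\to A$ contains all of its homogeneous components; applying this to $[u,v]^n$ (and to $[x,y]^n$ in each of its two slots) shows that $V$ is closed under polarization of its defining relations. In particular, writing $P(u;w_1,\dots,w_m):=\sum_{\sigma\in S_m}[u,w_{\sigma(1)}]\cdots[u,w_{\sigma(m)}]$ — so that the displayed elements are $w(u;v_1,\dots,v_{n-1})=[u,P(u;v_1,\dots,v_{n-1})]$ — one obtains that
\[
L(u;y_1,\dots,y_n):=\sum_{\tau\in S_n}[u,y_{\tau(1)}]\cdots[u,y_{\tau(n)}]\in V
\]
for all $u,y_1,\dots,y_n\in A$, and likewise all partial polarizations such as $\sum_{i=0}^{n-1}[u,v]^i[u,w][u,v]^{n-1-i}\in V$.

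The key structural identity I would rely on is obtained by sorting the permutations appearing in $L$ according to which argument occupies the last (resp.\ the first) factor:
\[
L(u;y_1,\dots,y_n)=\sum_{k=1}^{n}P\bigl(u;y_1,\dots,\widehat{y_k},\dots,y_n\bigr)\,[u,y_k]=\sum_{k=1}^{n}[u,y_k]\,P\bigl(u;y_1,\dots,\widehat{y_k},\dots,y_n\bigr).
\]
Thus the symmetrized products $P(\cdot)$ — which are exactly the expressions sitting inside the $w$'s — land in $V$ once multiplied by one further commutator and summed. A second, purely computational ingredient is the telescoping identity $\sum_{i=0}^{n-1}c^i[u,c]c^{n-1-i}=[u,c^n]$ (expand $[u,c]=uc-cu$), together with the absorption rules $[u,y]\,u=[u,yu]$ and $u\,[u,y]=[u,uy]$, which let one move stray copies of $u$ in and out of commutators.

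Now $w(u;\vec v)=[u,P(u;\vec v)]$ is itself a commutator, hence already in $V$; the real content of the lemma is that the two-sided ideal it generates stays in $V$, i.e.\ $a\,w(u;\vec v)\,b\in V$ for all $a,b\in A$. Since the inner derivation $c\mapsto[u,c]$ is a derivation, $a[u,P]b=[u,aPb]-[u,a]\,P\,b-a\,P\,[u,b]$, and the first summand is a commutator, so it remains to place the two ``boundary'' terms in $V$. Here the idea is to expand $P=P(u;\vec v)$, recognize $[u,a]P$ (resp.\ $P[u,b]$) through the identity above as a piece of an $L\in V$ minus its non-extremal contributions, feed those back through the derivation identity while using the absorption rules to clear the extra $u$'s, and iterate. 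Each step strictly decreases the number of $v$-arguments involved, which points to an induction on $n$ (or on an auxiliary length parameter within fixed $n$): in the inductive step every residual term should reduce to a commutator, a polarized $n$th power of a commutator, or an element of the ideal generated by $w$'s with fewer arguments, all of which are in $V$.

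The part I expect to be genuinely delicate is precisely this reduction of the boundary terms: keeping track of which monomials survive and checking that the ones that are neither commutators nor $n$th powers cancel. That cancellation should come entirely from the $v_i$'s entering $w(u;\vec v)$ fully symmetrized — consistent with the matrix-algebra sanity check, where $\sum_{\sigma\in S_{n-1}}[u,v_{\sigma(1)}]\cdots[u,v_{\sigma(n-1)}]$ collapses to a central element and hence $w(u;\vec v)=0$. Finally, I expect the hypothesis $\operatorname{char}(F)\nmid n$ to be invoked only once, to divide by $n$ when passing from a sum symmetric in all $n$ factors back to a single one of them.
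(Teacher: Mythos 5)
Your overall skeleton matches the paper's: fully linearize $[u,v]^n$ in $v$ to get $L(u;y_1,\dots,y_n)=\sum_{\sigma\in S_n}[u,y_{\sigma(1)}]\cdots[u,y_{\sigma(n)}]\in V$, then use the derivation identity $a[u,P]b=[u,aPb]-[u,a]Pb-aP[u,b]$ to pass from single products to the two-sided ideal. (The detour through $\overline{F}$ is unnecessary: the full multilinearization is an integral linear combination of the values $[u,\sum_{i\in S}v_i]^n$ over subsets $S$, hence lies in the linear subspace $V$ over any field, and only the full linearization is used.) However, there is a genuine gap at the crucial step, namely placing the boundary term $[u,a]\,P(u;v_1,\dots,v_{n-1})$ in $V$ for a single, arbitrary $a$. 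Your decomposition $L=\sum_{k}[u,y_k]P(u;\dots,\widehat{y_k},\dots)$ only puts the \emph{sum} of $n$ such terms in $V$, and substituting one instance of this identity into another does not decrease the number of arguments of $P$ --- it stays at $n-1$ --- so the proposed induction has no decreasing parameter and does not close.

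The missing idea, which is the one real point of the paper's proof, is that since $V$ contains all commutators, every product is cyclically invariant modulo $V$: $c_1c_2\cdots c_n\equiv c_2\cdots c_nc_1 \pmod{V}$ because the difference is the commutator $[c_1,c_2\cdots c_n]$. Grouping the $n!$ terms of $L(u;v_1,\dots,v_{n-1},w)$ into $(n-1)!$ cyclic classes, each class is congruent mod $V$ to $n$ times its unique representative beginning with $[u,w]$, whence $L\equiv n[u,w]\overline{v}_{n-1}\pmod{V}$ and, dividing by $n$ (indeed the only use of the characteristic hypothesis, as you predicted), $[u,w]\overline{v}_{n-1}\in V$ for \emph{every} $w\in A$. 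From there the paper finishes in two lines: $w[u,\overline{v}_{n-1}]=[u,w\overline{v}_{n-1}]-[u,w]\overline{v}_{n-1}\in V$, and then $w[u,\overline{v}_{n-1}]z=[w[u,\overline{v}_{n-1}],z]+(zw)[u,\overline{v}_{n-1}]\in V$. Your telescoping and absorption identities play no role.
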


\begin{proof}
   By assumption, $[u,v]^n \in V$ for all $u,v\in A$.   Linearizing in $v$, we obtain
\begin{equation}\label{pom}
\sum_{\sigma\in S_n} [u,v_{\sigma(1)}]\cdots [u,v_{\sigma(n)}] \in V    
\end{equation}
for all $u,v_1,\dots,v_n\in A$. Write $w=v_n$
and  $$\overline{v}_{n-1}=\sum_{\sigma\in S_{n-1}} [u,v_{\sigma(1)}]\cdots [u,v_{\sigma(n-1)}].$$
Since $V$ contains the commutators, \eqref{pom}
implies $n[u,w]\overline{v}_{n-1}\in V$, and therefore
$$[u,w]\overline{v}_{n-1}\in V$$
   by the characteristic assumption.
   Consequently, for any $u,w, v_i\in A$, 
   $$w[u,\overline{v}_{n-1}] = [u,w\overline{v}_{n-1}] - [u,w]\overline{v}_{n-1} \in V,$$
   and hence also
     $$w[u,\overline{v}_{n-1}]z = [w[u,\overline{v}_{n-1}],z] + (zw)[u,\overline{v}_{n-1}]\in V $$ for all 
     $u,w,z,v_i\in A$.   This proves the lemma.
\end{proof}

Let $X=\{x_1,x_2,\dots\}$ and let $F\langle X\rangle$ be the free algebra on the set $X$. For an element $f\in F\langle X\rangle$, let $(f)^{\rm T}$ denote the T-ideal generated by  $f$,
i.e.,  $(f)^{\rm T} = \FX f(\FX)\FX$.

We call a T-ideal $ T$ of $\FX$ a {\bf nonmatrix commutator T-ideal} if $T\subseteq ([x_1,x_2])^{\rm T}$ and $T(M_2(F))\ne (0)$.  We also assume that
for any of its  elements, $T$ also contains all its partial linearizations. This automatically holds if the field $F$ is infinite.

\begin{lemma} \label{lef1} Let $T_1,T_2$ be
two nonmatrix commutators {\rm T}-ideals. If $A$ is any unital algebra, then
 $A=T_1(A)$ if and only if $A=T_2(A)$. 
\end{lemma}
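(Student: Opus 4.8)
The plan is to show that for any unital algebra $A$ and any nonmatrix commutator T-ideal $T$, the condition $A = T(A)$ is equivalent to a single condition that does not depend on $T$; then the stated equivalence between $T_1$ and $T_2$ follows immediately. The natural candidate for this intrinsic condition is $A = K(A)^2$, or perhaps just $A = K(A)$ together with $K(A) = K(A)^2$; since $A$ is unital, $A = K(A)$ already forces $K(A) = K(A)^2$ (write $1 = \sum k_i$ with $k_i \in K(A)$, then $a = 1 \cdot a \cdot 1 \in K(A) A K(A) \subseteq K(A)^2$ for all $a$), so the target condition is simply $A = K(A)$. Thus it suffices to prove: \emph{for a unital algebra $A$ and a nonmatrix commutator T-ideal $T$, one has $A = T(A)$ if and only if $A = K(A)$.}

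First I would handle the easy direction. Since $T \subseteq ([x_1,x_2])^{\rm T}$, every element of $T(A)$ is a sum of products $a [b,c] d$, so $T(A) \subseteq K(A)$; hence $A = T(A)$ implies $A = K(A)$. For the reverse direction, assume $A = K(A)$. The key input is that $T(M_2(F)) \ne (0)$ combined with the structure theory of T-ideals: a T-ideal $T$ with $T(M_2(F)) \ne (0)$ is "large" in the sense that $M_2(F)$ (indeed any $2\times 2$-matrix algebra over a commutative ring) does not kill it, and by a standard Amitsur/Regev-type argument a nonzero T-ideal that is nonzero on $M_2$ must contain a power of the commutator ideal in the following sense: there should be an $m$ with $K(R)^m \subseteq T(R) + (\text{correction terms})$ for every $R$. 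Concretely, I would try to show that $A = K(A)$ forces $A = K(A)^m$ for all $m$ (again using unitality: $1 \in K(A) = K(A)^2 = \cdots = K(A)^m$), and then argue that $K(A)^m \subseteq T(A)$ for suitable $m$ depending only on $T$. The cleanest route to the last inclusion: since $T \not\subseteq$ (the T-ideal of identities of $M_2$) fails — wait, rather $T(M_2) \ne 0$ means $T$ is \emph{not} contained in the T-ideal $\mathrm{id}(M_2)$ of polynomial identities of $M_2(F)$; I would invoke that the T-ideal generated by $[x_1,x_2]$ modulo any nonmatrix T-ideal is nilpotent — but here we want the opposite inclusion, so instead I would use that $T$, being a nonzero T-ideal not inside $\mathrm{id}(M_2)$, contains, after multilinearization, a multilinear polynomial $f$ that is a nonidentity of $M_2(F)$, and such multilinear nonidentities of $M_2$ are known to generate (as a T-ideal) a T-ideal containing a power of $([x_1,x_2])^{\rm T}$; feeding in a unital algebra with $A = K(A)$ and substituting the unit in the appropriate slots then yields $A = K(A)^m \subseteq T(A) \subseteq A$.

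I expect the main obstacle to be the algebraic heart of the reverse direction: proving that a multilinear nonidentity of $M_2(F)$ lying in $([x_1,x_2])^{\rm T}$ generates a T-ideal containing some power $([x_1,x_2]^{\rm T})^m$ (equivalently, that such $f$ is "not too much smaller" than the full commutator T-ideal from the point of view of unital algebras). This is essentially a statement about the ideal structure of relatively free algebras and is where one must be careful: the correct form is probably that for \emph{unital} $A$ the substitution trick collapses the distinction, so the cleanest writeup substitutes $x_i \mapsto$ products of commutators and $x_i \mapsto 1$ to show directly that $[u_1,v_1]\cdots[u_N,v_N] \in T(A)$ for $N$ large, using $A = K(A) = K(A)^2 = \cdots$, and then that every element of $A = K(A)^N$ is such a product sum. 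Verifying that the specific substitutions land inside $T(A)$ and that the "$T$ is nonzero on $M_2$" hypothesis is exactly what makes those substitutions nonzero (rather than formally vanishing) will require invoking a known result — plausibly the characterization of nonmatrix T-ideals or a lemma from \cite{Z} — and pinning down which one is the delicate point.
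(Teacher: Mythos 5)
Your reduction of the lemma to the single statement ``for a unital algebra $A$ and a nonmatrix commutator T-ideal $T$, one has $A=T(A)$ if and only if $A=K(A)$'' is correct (take $T_2=([x_1,x_2])^{\rm T}$, which is itself a nonmatrix commutator T-ideal), and your easy direction $T(A)\subseteq K(A)$ is fine. The gap is in the reverse direction, which is the entire content of the lemma, and the mechanism you propose for it cannot work. The key fact you want --- that a nonmatrix commutator T-ideal $T$ satisfies something like $K(R)^m\subseteq T(R)$ (possibly up to ``correction terms'') for all algebras $R$ --- is false: take $T=([[x_1,x_2],x_3])^{\rm T}$ and $R=E$ the infinite-dimensional Grassmann algebra. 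Then $T(E)=E[[E,E],E]E=(0)$ because commutators in $E$ are central, while $K(E)^m\supseteq Fe_1e_2\cdots e_{2m}\ne(0)$ for every $m$; so no power of the commutator T-ideal is contained in $T$, and there is no known result of the kind you hope to invoke saying that a multilinear nonidentity of $M_2(F)$ lying in $([x_1,x_2])^{\rm T}$ generates a T-ideal containing a power of $([x_1,x_2])^{\rm T}$. Your fallback of substituting $1$ into a multilinear element of $T$ also does not obviously produce products of commutators (for $[[x_1,x_2],x_3]$ every such substitution gives $0$), and you explicitly leave this step as the unverified ``delicate point''; the proof is therefore incomplete exactly where the work lies.

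The hypotheses $1\in A$ and $A=K(A)$ must be used structurally rather than through identities valid in all algebras, and that is what the paper does. It reduces to a large algebraically closed base field, passes to $\overline{A}=A/T_2(A)$, notes $\overline{A}=T_1(\overline{A})$, and shows that every primitive quotient of $\overline{A}$ is isomorphic to $F$: Kaplansky's theorem gives $M_k(D)$, Amitsur's cardinality argument gives $D=F$, and $k\ge 2$ is excluded because $M_2(F)$ embeds in $M_k(F)$ and $\overline{A}$ satisfies all identities from $T_2$ while $T_2(M_2(F))\ne(0)$. Hence $\overline{A}/J(\overline{A})$ is commutative, so $\overline{A}=T_1(\overline{A})\subseteq K(\overline{A})\subseteq J(\overline{A})$, and a unital algebra equal to its Jacobson radical is zero. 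To salvage your plan you would need some such semisimple-quotient argument; a purely combinatorial containment of T-ideals is not available.
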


\begin{proof}
    Without loss of generality, we may assume that
    the field $F$ is algebraically closed and 
    card$(F) > \dim_F(A)$.

    Let $A=T_1(A)$. Consider the factor algebra
    $\oA=A/T_2(A)$. Our goal is to show that   $\oA=(0)$. We remark that
    $\oA = T_1(\oA)$.

    Let $\oA'$ be a primitive homomorphic image of $\oA$. By Kaplansky's theorem (see, e.g., \cite[Corollary 23.31]{Row}), $\oA' \cong M_k(D)$ where $D$ is a finite-dimensional division algebra over $Z(D)$. Since    card$(F) > \dim_F(D)$ and the field $F$ is algebraically closed, $D=F$ by the result by Amitsur (see, e.g., \cite[Corollary 15A.8]{Row}). The algebra $M_k(F)$ satisfies all identities from $T_2$. Hence, $k=1$ and 
    $\oA'\cong F$.

    Let $J(\oA)$ be the Jacobson ideal of the algebra $\oA$. We showed that the factor algebra $\oA/J(\oA)$ is commutative. Now,
    $\oA = T_1(\oA)$ and $T_1\subseteq ([x_1,x_2])^{\rm T}$ imply $\oA = J(\oA)$. Since $1\in\oA$ we conclude that $\oA=(0)$, as desired.
\end{proof}

\begin{proposition}
 \label{pp}
    If $A$ is a unital algebra such that $A=K(A)$, then
     every Jordan homomorphism $\varphi:A\to B$, where $B$ is any $F$-algebra, is nearly-standard.
\end{proposition}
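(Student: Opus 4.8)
The plan is to deduce the statement from Lemmas~\ref{le3}, \ref{lef1} and~\ref{ts0}. Concretely, I will show that the two hypotheses force $A$ to coincide with the linear span $V$ of all commutators $[x,y]$ and all \emph{fourth} powers $[u,v]^4$ of commutators ($x,y,u,v\in A$); once this is known, Lemma~\ref{ts0} applied with $n=4\ge 3$ immediately gives that $\varphi$ is nearly-standard. At the outset I would reduce to the case of an infinite field: if $F$ is finite, replace $F,A,B,\varphi$ by $K,\,A\otimes_F K,\,B\otimes_F K,\,\varphi\otimes\mathrm{id}_K$ for an infinite extension $K/F$ — the hypotheses survive because $A\otimes_F K$ is unital and $K(A\otimes_F K)=K(A)\otimes_F K=A\otimes_F K$ — prove the statement there, and descend near-standardness to $\varphi$ by reformulating it (as in the proof of Lemma~\ref{ts0}) as the existence of a linear $\psi$ with $\varphi([x,y])=[\varphi(x),\psi(y)]$ and projecting the $K$-linear solution back to $B$ along a decomposition $K=F\cdot 1\oplus W$. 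Over an infinite field all partial linearizations are automatically available, which is what makes the remaining steps clean.

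Assuming then that $F$ is infinite, I would first apply Lemma~\ref{le3} with $n=4$ (legitimate: $\mathrm{char}(F)\ne 2$ does not divide $4$): it shows that $V$ contains the ideal $I$ of $A$ generated by all elements
\[
 f(u,v_1,v_2,v_3)=\Big[u,\ \sum_{\sigma\in S_3}[u,v_{\sigma(1)}][u,v_{\sigma(2)}][u,v_{\sigma(3)}]\Big],\qquad u,v_1,v_2,v_3\in A.
\]
By construction $I=T_f(A)$, where $T_f=(f)^{\rm T}$. Next I would verify that $T_f$ is a \emph{nonmatrix commutator} T-ideal. It is contained in $([x_1,x_2])^{\rm T}$ because $f$ is a commutator $[u,\,\cdot\,]$. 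And $f$ is not a polynomial identity of $M_2(F)$: putting $u=e_{11},\ v_1=e_{21},\ v_2=v_3=e_{12}$ and using $m^2=-\det(m)I$ for trace-zero $2\times2$ matrices, a short computation gives $f(e_{11},e_{21},e_{12},e_{12})=-2e_{12}\ne 0$, where $\mathrm{char}(F)\ne 2$ is used; hence $T_f(M_2(F))\ne(0)$.

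Now $([x_1,x_2])^{\rm T}$ is also a nonmatrix commutator T-ideal, and $([x_1,x_2])^{\rm T}(A)=K(A)=A$ by hypothesis. Therefore Lemma~\ref{lef1}, applied with $T_1=([x_1,x_2])^{\rm T}$ and $T_2=T_f$, yields $T_f(A)=A$, i.e.\ $I=A$; hence $V=A$, and Lemma~\ref{ts0} finishes the proof.

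The point I expect to be the crux is the choice of exponent in Lemma~\ref{le3} together with the computation on $M_2(F)$. The obvious first attempt $n=3$ does \emph{not} work: in $M_2(F)$ the square of a commutator is a scalar matrix, so $[u,[u,v]^2]=0$ there and likewise its linearization $[u,[u,v_1]\circ[u,v_2]]$ vanishes on $M_2(F)$; the associated T-ideal would then be a \emph{matrix} one, and Lemma~\ref{lef1} could not be applied. One is thus driven to $n=4$ — the smallest exponent that is simultaneously $\ge 3$ (as Lemma~\ref{ts0} demands), coprime to $\mathrm{char}(F)$ (as Lemma~\ref{le3} demands), and such that the polynomial manufactured by Lemma~\ref{le3} survives on $M_2(F)$ — after which everything goes through routinely.
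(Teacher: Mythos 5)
Your proof is correct and follows essentially the same route as the paper: Lemma~\ref{le3} with $n=4$, Lemma~\ref{lef1} applied to the pair $(f)^{\rm T}$ and $([x_1,x_2])^{\rm T}$, and then Lemma~\ref{ts0}. The only differences are details the paper leaves implicit, namely the explicit verification that $f$ is not an identity of $M_2(F)$ (your value $-2e_{12}$ checks out) and the reduction to an infinite base field to guarantee closure under partial linearizations.
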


\begin{proof}
    Note that   $$T_1= ([x_4, \sum_{\sigma\in S_{3}} [x_4,x_{\sigma(1)}]\cdots [x_4,x_{\sigma(3)}]])^{\rm T}$$  is a nonmatrix commutator T-ideal. 
     Obviously, so is $T_2=([x_1,x_2])^{\rm T}$.  Lemma \ref{lef1} thus shows that $A=K(A) =T_2(A)$ implies $A=T_1(A)$. The $n=4$ case of Lemma \ref{le3} further shows that in this case $A$ is equal to the linear span of commutators and their $4$th powers. We can now apply Lemma \ref{ts0}.
\end{proof}

Our next goal is to provide conditions under which nearly-standard Jordan homomorphisms are standard. We need the following definition.

Let $B$ be an algebra.
A  mapping $\Delta:B\times B\to B$ is called a {\bf biderivation} if
it is a derivation in each argument.
A biderivation $\Delta$ is said to be {\bf inner} if there exists an element $\lambda\in Z(B)$ such that 
$$\Delta(r,s)=\lambda [r,s]$$ for all
$r,s\in B$.

\begin{lemma}
 \label{ts}
  Let $B$ be a unital  algebra such that every biderivation of $B$ is inner and for every $\alpha\in Z(B)$, $\alpha[[B,B],B]=(0)$ implies $\alpha=0$. Then every nearly-standard Jordan isomorphism $\varphi:A\to B$, where $A$ is any algebra, is standard. Moreover, 
  there exists an  idempotent $\ve \in Z(B)$ such that $x\mapsto \ve \varphi(x)$ is a homomorphism and $x\mapsto (1-\ve) \varphi(x)$ is an antihomomorphism.
\end{lemma}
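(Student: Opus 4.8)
The starting point is the nearly-standard decomposition $\varphi=\varphi_1+\varphi_2$ with $\varphi(xy)=\varphi(x)\varphi_1(y)+\varphi_2(y)\varphi(x)$. First I would try to produce a biderivation out of the defect of $\varphi_1$ from being a homomorphism. Set $d(x,y)=\varphi_1(xy)-\varphi_1(x)\varphi_1(y)$ (and symmetrically for $\varphi_2$); using the two displayed identities defining ``nearly-standard'' together with \eqref{abba}, \eqref{abba2} and \eqref{ene}, I expect to show that the obstruction to $\varphi_1$ being a homomorphism is symmetric and can be transported, via $\varphi^{-1}$ (here is where Jordan \emph{iso}morphism is used), to a biderivation $\Delta:B\times B\to B$. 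Concretely: since $\varphi$ is bijective, $\psi_i:=\varphi_i\circ\varphi^{-1}:B\to B$ are well-defined linear maps with $\psi_1+\psi_2=\mathrm{id}_B$ and $rs=r\,\psi_1(s)+\psi_2(s)\,r$ for all $r,s\in B$. The plan is to extract from this ``twisted multiplication'' identity a genuine biderivation; the natural candidate is $\Delta(r,s)=\psi_1(rs)-\psi_1(r)s-r\psi_1(s)+\text{(correction terms)}$, chosen so that $\Delta$ is bilinear and a derivation in each slot. The identities \eqref{abba}--\eqref{abba2} applied in $B$ (to the elements $a,b$ coming from $\varphi^{-1}$) should force the cross terms to vanish and make $\Delta$ literally a biderivation.

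Once a biderivation is in hand, the hypothesis gives $\lambda\in Z(B)$ with $\Delta(r,s)=\lambda[r,s]$. The next step is to feed this back: I expect to deduce that $\psi_1(s)-$ (something central times $s$) is multiplicative, i.e. that there is a central element $\mu$ with $\psi_1(s)=\mu s + \nu s'$ decomposing $s$ according to a central idempotent. More precisely, the relation $rs=r\psi_1(s)+\psi_2(s)r$ together with $\psi_1+\psi_2=\mathrm{id}$ gives $r(s-\psi_1(s))=\psi_2(s)r$, i.e. $r\psi_2(s)=\psi_2(s)r$ would say $\psi_2(s)\in Z(B)$ — that is too strong, so instead the biderivation computation should pin down $\psi_1$ as ``left multiplication by a central idempotent composed with $\mathrm{id}$ plus an inner perturbation.'' Using that every biderivation is inner, I anticipate $\psi_1(r)=\ve r + [c,r]$ for a central idempotent $\ve$ and some $c\in B$; plugging back into $rs=r\psi_1(s)+\psi_2(s)r$ and comparing, the inner part $[c,\cdot]$ must drop out, leaving $\psi_1 = $ multiplication by $\ve$ exactly on the image, so $\varphi_1=\ve\varphi$, $\varphi_2=(1-\ve)\varphi$.

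With $\ve\in Z(B)$ an idempotent and $\varphi_1=\ve\varphi$, $\varphi_2=(1-\ve)\varphi$, the nearly-standard identity reads $\varphi(xy)=\ve\varphi(x)\varphi(y)+(1-\ve)\varphi(y)\varphi(x)$. Multiplying by $\ve$ gives $\ve\varphi(xy)=\ve\varphi(x)\varphi(y)=(\ve\varphi(x))(\ve\varphi(y))$, so $x\mapsto\ve\varphi(x)$ is a homomorphism; multiplying by $1-\ve$ gives the antihomomorphism statement. The orthogonality $\varphi_1(A)\varphi_2(A)=\varphi_2(A)\varphi_1(A)=(0)$ is then immediate from $\ve(1-\ve)=0$ and centrality of $\ve$, so $\varphi$ is standard. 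The remaining point is to \emph{verify} that $\ve$ is an idempotent and that it is central in $B$ (not merely in $\langle\varphi(A)\rangle=B$ by surjectivity): squaring the decomposition identity, or applying it with $y$ replaced by a product and iterating, should give $\ve^2\varphi=\ve\varphi$ hence $\ve^2=\ve$ on $B$, and since $\varphi$ is onto, $\ve^2=\ve$; centrality was already built in when we invoked inner-ness of the biderivation, but if needed it is recovered from the second hypothesis, that $\alpha[[B,B],B]=(0)$ with $\alpha$ central forces $\alpha=0$ — this is precisely the uniqueness clause that kills any ambiguity in the decomposition (two central idempotents giving the same $\varphi_1$ differ by an $\alpha$ annihilating $[[B,B],B]$).

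The main obstacle I anticipate is the first step: correctly guessing the bilinear expression whose inner-ness yields the splitting, and checking it is a biderivation. The combinatorics of pushing \eqref{ene}, \eqref{abba}, \eqref{abba2} through the composition with $\varphi^{-1}$ is where the real work lies; the hypotheses on $B$ (inner biderivations, plus the center-annihilator condition) are exactly tailored so that once this biderivation is identified, the rest is formal. A secondary subtlety is making sure the argument only uses that $\varphi$ is a \emph{bijective} Jordan homomorphism (so that $\varphi^{-1}$ exists as a Jordan homomorphism and the transported maps $\psi_i$ make sense), which is why the lemma is stated for Jordan isomorphisms rather than epimorphisms.
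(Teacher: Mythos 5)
Your overall skeleton---transport the maps to $B$ via $\varphi^{-1}$, extract a biderivation, invoke innerness to produce a central element $\ve$, and then split $\varphi$ by multiplying with $\ve$ and $1-\ve$---is the same as the paper's, and your final step (once $\ve$ is a central idempotent, multiply $\varphi(xy)=\ve\varphi(x)\varphi(y)+(1-\ve)\varphi(y)\varphi(x)$ by $\ve$ and by $1-\ve$) is exactly right. But the two steps you yourself flag as ``where the real work lies'' are genuinely missing, and your guesses for both are off. First, the biderivation is not your candidate $\psi_1(rs)-\psi_1(r)s-r\psi_1(s)+\cdots$; it is simply $\Delta(r,s)=[r,\psi_2(s)]$ with $\psi_2=\varphi_2\circ\varphi^{-1}$. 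The point is that the nearly-standard identity gives $\varphi(xy)-\varphi(x)\varphi(y)=[\varphi_2(y),\varphi(x)]$; adding this to the same identity with $x$ and $y$ interchanged, the left-hand sides sum to zero because $\varphi$ is a Jordan homomorphism, which yields the symmetry $[r,\psi_2(s)]=[\psi_2(r),s]$. That symmetry is what makes $\Delta$ a derivation in the second variable (it is a derivation in the first variable for free, being inner for each fixed $s$). None of \eqref{abba}, \eqref{abba2}, \eqref{ene} is needed here. Innerness then gives $[r,\psi_2(s)-\lambda s]=0$ for all $r$, so $\psi_2(s)=\lambda s+(\text{central})$---a \emph{central} perturbation, not an inner one $[c,\cdot]$ as you wrote; it drops out of $\varphi(x)\varphi_1(y)+\varphi_2(y)\varphi(x)$ precisely because central elements commute, and one arrives at the displayed identity above with $\ve=1-\lambda$.

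Second, the idempotency of $\ve$. Your suggestion that ``squaring the decomposition identity \dots should give $\ve^2\varphi=\ve\varphi$'' does not work, and you have misassigned the role of the second hypothesis: it is not a uniqueness clause. The paper computes $\varphi(xyz)$ as $\varphi(x(yz))$ and as $\varphi((xy)z)$ using the identity $\varphi(xy)=\ve\varphi(x)\varphi(y)+(1-\ve)\varphi(y)\varphi(x)$ and compares, obtaining $\ve(1-\ve)[[\varphi(x),\varphi(z)],\varphi(y)]=0$; since $\varphi$ is onto, $\ve(1-\ve)[[B,B],B]=(0)$, and the hypothesis that a central $\alpha$ with $\alpha[[B,B],B]=(0)$ must vanish is invoked exactly here to conclude $\ve^2=\ve$. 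Without this associativity computation the proof is incomplete, since $\ve=1-\lambda$ is a priori just a central element with no reason to be idempotent.
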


\begin{proof}
Let $\varphi_1$ and $\varphi_2$ be mappings satisfying $$\varphi=\varphi_1+\varphi_2\quad\mbox{and}\quad \varphi(xy)=\varphi(x)\varphi_1(y)+ \varphi_2(y)\varphi(x).$$ These equations imply
$$\varphi(xy) - \varphi(x)\varphi(y) = [\varphi_2(y),\varphi(x)].$$
Replacing the roles of $x$ and $y$, we obtain
$$\varphi(yx) - \varphi(y)\varphi(x) = [\varphi_2(x),\varphi(y)].$$
The sum of the left-hand sides is $0$ 
since $\varphi$ is a Jordan homomorphism. Therefore,
$$[\varphi(x),\varphi_2(y)]=[\varphi_2(x),\varphi(y)].$$
Here, $x$ and $y$ are arbitrary elements in $A$. Writing $r=\varphi(x)$ and $s=\varphi(y)$, we obtain
$$[r,\varphi_2(\varphi^{-1}(s))] = [\varphi_2(\varphi^{-1}(r)),s] $$
    for all $r,s\in B$.   The mapping
$(r,s)\mapsto [r,\varphi_2(\varphi^{-1}(s))]$ is therefore a biderivation of $B$. By assumption, there exists a $\lambda\in Z(B)$ such that  
$$[r,\varphi_2(\varphi^{-1}(s))] =\lambda[r,s]$$
for all $r,s\in B$. Hence, $$\varphi_2(\varphi^{-1}(s))  - \lambda s\in Z(B)$$ for every $s\in B$. Writing $s=\varphi(y)$ and $$\mu(y)=\varphi_2(y) - \lambda\varphi(y),$$ we see that  
$\mu(y)\in Z(B)$
for every $y\in A$. Since $\varphi=\varphi_1+\varphi_2$, we have $$\varphi_1(y)=(1-\lambda)\varphi(y)-\mu(y).$$  Set  $\ve =1-\lambda$ and note that $\varphi(xy)=\varphi(x)\varphi_1(y)+ \varphi_2(y)\varphi(x)$ can now be written as
\begin{equation}\label{s3}
    \varphi(xy)=\ve \varphi(x)\varphi(y) + (1-\ve)\varphi(y)\varphi(x)
\end{equation}
for all $x,y\in A$. Consequently,
\begin{align*}  \varphi(xyz)=&\varphi(x(yz)) \\=& \ve \varphi(x)\varphi(yz)  
    + (1-\ve)\varphi(yz)\varphi(x)\\
    =&\ve^2 \varphi(x)\varphi(y)\varphi(z)+
    \ve(1-\ve)\varphi(x)\varphi(z)\varphi(y)\\&+\ve(1-\ve)\varphi(y)\varphi(z)\varphi(x) + (1-\ve)^2 \varphi(z)\varphi(y)\varphi(x).
\end{align*}
On the other hand,
\begin{align*}
    \varphi(xyz)=&\varphi((xy)z) \\=& \ve \varphi(xy)\varphi(z)  
    + (1-\ve)\varphi(z)\varphi(xy)\\
    =&\ve^2 \varphi(x)\varphi(y)\varphi(z)+
    \ve(1-\ve)\varphi(y)\varphi(x)\varphi(z)\\&+\ve(1-\ve)\varphi(z)\varphi(x)\varphi(y) + (1-\ve)^2 \varphi(z)\varphi(y)\varphi(x).
\end{align*}
Comparing both expressions, we obtain 
$$\ve(1-\ve)[[\varphi(x),\varphi(z)],\varphi(y)] =0$$for all $x,y,z\in A$. 
By our assumption,  this implies
 $\ve(1-\ve)=0$. Thus,
$\ve$ is a central idempotent.
Now, multiplying \eqref{s3} by $\ve$ we see that $x\mapsto\ve\varphi(x)$ is a homomorphism, and multiplying \eqref{s3} by $1-\ve$ we see that $x\mapsto(1-\ve)\varphi(x)$ is an antihomomorphism.
\end{proof}

We need three more lemmas to obtain the main result of this section. The first one is known from the theory of functional identities, see  \cite[Example 1.5]{FIbook}.

\begin{lemma} \label{lbid} If $B$ is  a unital algebra such that $B=K(B)$, then every biderivation  of $B$ is inner.
\end{lemma}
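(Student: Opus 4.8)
The plan is to first derive the classical biderivation identity by a direct Leibniz-rule computation, and then to use the hypothesis $B=K(B)$ twice: once to manufacture a central multiplier $\lambda$, and once to verify that $\lambda$ really is central. I would begin by expanding $\Delta(xz,yw)$ in two ways, using that $\Delta$ is a derivation in each slot. Applying the Leibniz rule first in the left variable and then in the right gives $\Delta(x,y)wz+y\Delta(x,w)z+x\Delta(z,y)w+xy\Delta(z,w)$, while doing it in the opposite order gives $\Delta(x,y)zw+x\Delta(z,y)w+y\Delta(x,w)z+yx\Delta(z,w)$. Two of the four terms coincide, and comparing the remaining ones yields
\[
\Delta(x,y)[z,w]=[x,y]\Delta(z,w)\qquad\text{for all }x,y,z,w\in B.
\]
Replacing $d$ by $db$ here (using the Leibniz rule in the right variable and cancelling the resulting $d[c,b]$-term by the identity itself), one obtains the slightly stronger
\[
[c,d]\,b\,\Delta(r,s)=\Delta(c,d)\,b\,[r,s]\qquad\text{for all }b,c,d,r,s\in B.
\]

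Now the hypothesis enters. Since $B$ is unital and $B=K(B)$, we may write $1=\sum_{i=1}^{n}a_i[c_i,d_i]b_i$ for suitable $a_i,b_i,c_i,d_i\in B$. Put $\lambda=\sum_{i=1}^{n}a_i\,\Delta(c_i,d_i)\,b_i\in B$. Multiplying the second displayed identity on the left by $a_i$, summing over $i$, and using the chosen representation of $1$, we obtain $\Delta(r,s)=\sum_i a_i[c_i,d_i]b_i\,\Delta(r,s)=\sum_i a_i\,\Delta(c_i,d_i)\,b_i\,[r,s]=\lambda[r,s]$ for all $r,s\in B$. So $\Delta$ already has the right shape, and the only remaining task is to prove the centrality of $\lambda$.

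For that I would feed the formula $\Delta(r,s)=\lambda[r,s]$ back into the Leibniz rule in the left variable, $\Delta(rr',s)=\Delta(r,s)r'+r\Delta(r',s)$. All terms cancel except $[\lambda,r]\,[r',s]=0$, valid for all $r,r',s$; that is, $[\lambda,r]\,[B,B]=(0)$ for every $r\in B$. Replacing $r$ by $rp$ gives $[\lambda,rp]\,[B,B]=(0)$; since $[\lambda,rp]=r[\lambda,p]+[\lambda,r]p$ and $r[\lambda,p]\,[B,B]=(0)$ already, this yields $[\lambda,r]\,p\,[B,B]=(0)$ for all $p$, hence $[\lambda,r]\,B[B,B]B=[\lambda,r]\,K(B)=(0)$. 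As $K(B)=B\ni 1$, this forces $[\lambda,r]=0$. Thus $\lambda\in Z(B)$ and $\Delta(r,s)=\lambda[r,s]$, i.e.\ $\Delta$ is inner.

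Nothing here is deep: the first displayed identity is a routine, characteristic-free computation, and the rest is bookkeeping of cancellations. The one point that genuinely requires care — and the place where the assumption $B=K(B)$ does its essential work — is the last paragraph: one must notice that $B=K(B)$ is needed a second time, to upgrade the conclusion ``$[\lambda,r]$ annihilates every commutator'' to ``$[\lambda,r]$ annihilates $B$'', which is exactly what makes $\lambda$ central rather than merely commuting with $[B,B]$.
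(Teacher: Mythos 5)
Your proof is correct. The paper does not actually prove this lemma---it only cites \cite[Example 1.5]{FIbook}---and your argument is exactly the standard functional-identities argument found there: derive $\Delta(x,y)[z,w]=[x,y]\Delta(z,w)$ and its strengthening $\Delta(x,y)b[z,w]=[x,y]b\Delta(z,w)$, use a representation $1=\sum_i a_i[c_i,d_i]b_i$ (available since $B$ is unital and $B=K(B)=B[B,B]B$) to produce $\lambda$ with $\Delta(r,s)=\lambda[r,s]$, and then invoke $B=K(B)$ a second time to upgrade $[\lambda,r][B,B]=(0)$ to $[\lambda,r]=0$.
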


The second lemma is a byproduct of Lemma \ref{lef1}.

\begin{lemma}\label{lefi2}
If $A$ is a unital algebra such that   $A=K(A)$,   then $A =    [[A,A],A]\circ A$.
\end{lemma}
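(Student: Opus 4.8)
The plan is to derive this quickly from Lemma~\ref{lef1}. First I would feed into that lemma the two nonmatrix commutator T-ideals $T_2 = ([x_1,x_2])^{\rm T}$ and $T_1 = ([[x_1,x_2],x_3])^{\rm T}$ of $\FX$. That $T_1$ qualifies is a direct check: $[[x_1,x_2],x_3] = [x_1,x_2]x_3 - x_3[x_1,x_2]$ lies in $([x_1,x_2])^{\rm T}$, and evaluating $[[x_1,x_2],x_3]$ at $x_1 = e_{12}$, $x_2 = e_{21}$, $x_3 = e_{12}$ in $M_2(F)$ gives $2e_{12}\ne 0$, so $T_1(M_2(F))\ne(0)$; moreover $T_1$ is generated by a multilinear polynomial, hence contains all partial linearizations of its elements. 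Since $K(A) = T_2(A)$ by the very definition of the commutator ideal, the hypothesis $A = K(A)$ and Lemma~\ref{lef1} give $A = T_1(A)$, i.e.\ $A$ equals the two-sided ideal of $A$ generated by the set of triple commutators $[[a,b],c]$.

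Next I would record that $L := [[A,A],A]$ (the linear span of the triple commutators) is a Lie ideal of $A$, that is, $[A,L]\subseteq L$: a generator of $[A,L]$ has the form $[a,[[b,c],d]] = -[[[b,c],d],a]$, and since $[b,c]\in A$ the element $[[b,c],d]$ already lies in $[A,A]$, so $[[[b,c],d],a]\in[[A,A],A] = L$. From this the ideal generated by $L$ is seen to be just $LA$, the linear span of the products $zb$ with $z\in L$, $b\in A$: indeed $LA$ is a right ideal containing $L = L\cdot 1$, and it is also a left ideal because $a(zb) = z(ab) + [a,z]\,b\in LA$, where $[a,z]\in L$ is used. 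Hence, by the previous paragraph, $A = LA$.

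It then remains to verify that $LA$ is contained in $W := [[A,A],A]\circ A$. First, $L\subseteq W$, since $z = \frac{1}{2}(z\circ 1)\in L\circ A$ for every $z\in L$ (here we use that $A$ is unital and ${\rm char}(F)\ne 2$). Then, for $z\in L$ and $b\in A$, writing $zb = \frac{1}{2}(z\circ b) + \frac{1}{2}[z,b]$ and noting that $z\circ b\in L\circ A = W$ while $[z,b]\in[A,L]\subseteq L\subseteq W$, we get $zb\in W$; thus $LA\subseteq W$. Combining, $A = LA\subseteq W\subseteq A$, so $A = [[A,A],A]\circ A$, as claimed. The only non-elementary input here is Lemma~\ref{lef1}; after that the reasoning is routine, and the single point deserving attention is the identification of the ideal generated by $L$ with $LA$, which is precisely where the Lie-ideal property of $L$ is used. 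I do not anticipate any genuine obstacle beyond selecting the right pair of T-ideals to which Lemma~\ref{lef1} is applied.
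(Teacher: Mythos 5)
Your proof is correct and follows essentially the same route as the paper: apply Lemma \ref{lef1} to $T_1=([[x_1,x_2],x_3])^{\rm T}$, use the Lie-ideal property of $[[A,A],A]$ to reduce the two-sided ideal to $[[A,A],A]A$, and then split $zb=\frac{1}{2}(z\circ b)+\frac{1}{2}[z,b]$. You merely spell out a few steps the paper leaves implicit (the check that $T_1$ is a nonmatrix commutator T-ideal, and that $L\subseteq L\circ A$ via the unit).
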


\begin{proof}
    Let $T_1=([[x_1,x_2],x_3])^{\rm T}$. Clearly,
    $T_1$ is a nonmatrix commutator T-ideal. Hence by Lemma \ref{lef1}, $T_1(A)=A$, i.e.,
    $A[[A,A],A] A=A$. Since $[[A,A],A]$ is  a Lie ideal (i.e., $[[[A,A],A],A]\subseteq [[A,A],A]$),  we can write this as $A= [[A,A],A]A$. Using {\rm char}$(F)\ne 2$, it follows that
    $$A =  [[[A,A],A],A] +  [[A,A],A]\circ A.$$
   The first term lies in $[[A,A],A]$, and hence $A =    [[A,A],A]\circ A$.
\end{proof}

The third lemma is about Jordan ideals of associative algebras---recall that these are  linear subspaces $J$ of $A$ satisfying $J\circ A\subseteq J$.

\begin{lemma}\label{lm}
If $A$ is a unital algebra such that   $A=K(A)$,    then
   every Jordan ideal of $A$ is an ideal.
\end{lemma}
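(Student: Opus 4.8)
The plan is to show that if $J$ is a Jordan ideal of $A$, then $J A \subseteq J$ (equivalently $AJ \subseteq J$), which together with $J$ being a linear subspace gives that $J$ is a two-sided ideal. The starting point is the standard identity relating Jordan and associative multiplication: for $j \in J$ and $a, b \in A$ we have $(j \circ a) \circ b - j \circ (a \circ b) = a j b + b j a - j a b - a b j$, wait — more useful is the observation that $j \circ (a \circ b) - (j \circ a) \circ b + (j \circ b) \circ a = [[a,b],j]$ is a rearrangement of \eqref{uava}. Since $J$ is a Jordan ideal, the left-hand side lies in $J$, so $[[a,b],j] \in J$ for all $a,b \in A$, $j \in J$; that is, $[[A,A],J] \subseteq J$.

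Next I would leverage $A = K(A)$ via Lemma \ref{lefi2}, which gives $A = [[A,A],A]\circ A$. The idea is that for $j \in J$ and any product of the form $(cd)\circ e$ with $c \in [[A,A],A]$, we want $j \cdot ((cd)\circ e) \in J$; since such elements span $A$, this yields $jA \subseteq J$. Concretely, take $c = [[a,b],a'] \in [[A,A],A]$. First, from $[[A,A],J]\subseteq J$ and the fact that $[[A,A],A]$ is a Lie ideal, I expect to push this to $[[[A,A],A], J] \subseteq J$, i.e. $[c, j] \in J$. Then I need to combine the Jordan-ideal property $j \circ x \in J$ with the commutator property $[c,j]\in J$ for $c$ ranging over a generating set of $A$ as a Lie-type object, to get genuine one-sided products $jx$. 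The mechanism: $2 j x = j\circ x + [j,x]$, so it suffices to show $[j, x] \in J$ for all $x \in A$. Since $A$ is spanned by elements of $[[A,A],A]\circ A$, and for $c \in [[A,A],A]$, $a \in A$ we have $[j, c\circ a] = [j,c]\circ a + c\circ[j,a]$ — hmm, this reintroduces $[j,a]$ for arbitrary $a$, so I'd instead argue more carefully.

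The cleaner route: show directly that $J \supseteq [J, A]$ by using that $A = [[A,A],A]A$ (from the proof of Lemma \ref{lefi2}) together with the already-established $[[[A,A],A], J]\subseteq J$. For $j\in J$, $c\in[[A,A],A]$, $a\in A$: $[j, ca] = [j,c]a + c[j,a]$. We know $[j,c] = -[c,j]\in J$, so $[j,c]a$ — still a one-sided product of a $J$-element with $a$, which is what we're trying to control. So the argument must be bootstrapped or done in the right order: first establish $J c \subseteq J$ for $c \in [[A,A],A]$ using $j\circ c\in J$ and $[j,c]\in J$ (via $2jc = j\circ c + [j,c]$), hence also $Jca \subseteq J$? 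No — $(jc)a$ needs $jc \in J$ and then multiplication by $a$, which is again the original problem.

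I think the decisive trick is: since $jc \in J$ whenever $c\in[[A,A],A]$, and $[[A,A],A]$ absorbs commutators with $A$, we get $J\cdot[[A,A],A]\cdot A \subseteq J$ by writing $jca = j(c\circ a) - jac = j(c\circ a) - (ja)c$ and noting $j(c\circ a)\in J$ (Jordan ideal, since $c\circ a\in A$), while for $(ja)c$ with $c\in[[A,A],A]$ we separately show $Aj\cdot[[A,A],A]\subseteq J$; symmetric manipulation plus $A = [[A,A],A]A = A[[A,A],A]$ closes a finite loop of mutual containments. The main obstacle I anticipate is exactly this bookkeeping — setting up the right finite system of inclusions among $J$, $JA$, $AJ$, $J[[A,A],A]$, $[[A,A],A]J$ so that $A = [[A,A],A]\circ A$ forces collapse — rather than any deep new idea. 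I'd present it by first recording $[[A,A],J]\subseteq J$ and $[[[A,A],A],J]\subseteq J$, then showing $J\cdot([[A,A],A]\circ A)\subseteq J$ through the commutator/Jordan-product decomposition, and finally invoking Lemma \ref{lefi2}.
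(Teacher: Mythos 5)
There is a genuine gap: the proposal never actually closes the argument, and the one concrete mechanism it offers for doing so rests on a false step. In the final paragraph you write ``$jca = j(c\circ a) - jac$ \dots\ noting $j(c\circ a)\in J$ (Jordan ideal, since $c\circ a\in A$)''. Here $j(c\circ a)$ is the \emph{associative} product of $j$ with $c\circ a$; a Jordan ideal only guarantees $j\circ(c\circ a)\in J$, not $j\cdot(c\circ a)\in J$ --- the latter is exactly the kind of one-sided product you are trying to establish, so this is circular. The remaining ``finite loop of mutual containments'' is announced but never exhibited, and your own running commentary correctly identifies that each attempted route (via $[J,A]\subseteq J$, via $Jc\subseteq J$ for $c\in[[A,A],A]$, etc.) reintroduces the quantity being controlled. (A smaller slip: your first displayed identity carries a spurious term $j\circ(a\circ b)$; by \eqref{uava} one has $[j,[a,b]]=(j\circ a)\circ b-(j\circ b)\circ a$, though the conclusion $[[A,A],J]\subseteq J$ you draw from it is correct.)

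The missing idea is to aim not at $[J,A]\subseteq J$ but at one-sided multiplication by \emph{commutators themselves}: for $j\in J$ and $a,b\in A$,
\begin{equation*}
[a,b]\,j=\tfrac{1}{2}\bigl([a,b]\circ j+[[a,b],j]\bigr)\in J,
\end{equation*}
and similarly $j\,[a,b]\in J$, using exactly your observation $[[A,A],J]\subseteq J$ together with the Jordan-ideal property. Consequently the set $M=\{x\in A\mid xJ\subseteq J,\ Jx\subseteq J\}$ is a subalgebra of $A$ containing $[A,A]$, so it contains $\langle[A,A]\rangle$, and the problem reduces to showing $A=\langle[A,A]\rangle$. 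This is where $A=K(A)$ enters, via the T-ideal machinery rather than Lemma \ref{lefi2}: the paper applies Lemma \ref{lef1} to the nonmatrix commutator T-ideal $([[x_1,x_2],x_3])^{\rm T}$ to get $A=[[A,A],A]A$, and the identity $[[x,y],z]w=[[x,y]w,z]-[x,y][w,z]$ shows $[[A,A],A]A\subseteq\langle[A,A]\rangle$. (The paper outsources the first reduction to \cite[Corollary 2.2]{BFF}.) Your instinct to use $[[A,A],J]\subseteq J$ and the $A=K(A)$ hypothesis is the right raw material, but without the multiplier-subalgebra step the containments do not collapse.
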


\begin{proof}
    By \cite[Corollary 2.2]{BFF}, it is enough to show that $A$ coincides with the subalgebra $\langle [A,A]\rangle$ generated by $[A,A]$. As noticed in the  proof of the preceding lemma, to this end it suffices to show that 
    $\langle [A,A]\rangle$ contains the ideal $A[[A,A],A]A = [[A,A],A]A$---and this is true as we see  from
$[[x,y],z]w = [[x,y]w,z]-[x,y][w,z]$.    
\end{proof}

\begin{theorem}\label{mt1}
 If $A$ is a unital algebra such that   $A=K(A)$, then  every Jordan epimorphism 
  $\varphi:A\to B$, where $B$ is any $F$-algebra, is standard. Moreover, there exists an idempotent $\ve \in Z(B)$ such that $x\mapsto \ve \varphi(x)$ is a homomorphism and $x\mapsto (1-\ve) \varphi(x)$ is an antihomomorphism.
\end{theorem}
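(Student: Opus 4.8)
The plan is to combine Proposition~\ref{pp} with Lemma~\ref{ts} by first reducing to the situation in which the target algebra $B$ has the good structural properties required by Lemma~\ref{ts}. First I would observe that since $\varphi$ is surjective and $A=K(A)$, the algebra $B$ is also equal to its commutator ideal: indeed, $\varphi$ maps commutators to commutators (as $\varphi([x,y])$ has the form $[\varphi(x),\psi(y)]$ by Lemma~\ref{le2}, or more simply because the Jordan structure transports commutators), and more carefully $B=\varphi(A)=\varphi(K(A))$ lies in $K(B)$ by the standard Jordan-homomorphism identities. A cleaner route: by Proposition~\ref{pp}, $\varphi$ is nearly-standard, and then \eqref{ha} shows $\varphi(xy)-\varphi(x)\varphi(y)=[\varphi_2(y),\varphi(x)]\in K(B)$, so every product of two elements of $\varphi(A)$ is congruent mod $K(B)$ to a Jordan product, whence $B=\langle\varphi(A)\rangle$ is spanned mod $K(B)$ by Jordan products of elements of $\varphi(A)$; since $B=A/\ker$-type quotient inherits unitality, and $A=[[A,A],A]\circ A$ by Lemma~\ref{lefi2}, applying $\varphi$ gives $B=[[B,B],B]\circ B\subseteq K(B)$, so $B=K(B)$.

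Once $B=K(B)$ (and $B$ is unital, being the image of the unital $A$ under the Jordan epimorphism, with $\varphi(1)$ the identity of $B$), Lemma~\ref{lbid} tells us every biderivation of $B$ is inner, which is the first hypothesis of Lemma~\ref{ts}. For the second hypothesis, suppose $\alpha\in Z(B)$ satisfies $\alpha[[B,B],B]=(0)$; then $\alpha(B)=\alpha([[B,B],B]\circ B)$, using Lemma~\ref{lefi2} applied to $B$, and since $\alpha$ is central this equals $([[B,B],B]\circ(\alpha B))\subseteq \alpha[[B,B],B]\cdot B + B\cdot\alpha[[B,B],B]=(0)$, forcing $\alpha B=(0)$ and hence $\alpha=\alpha\cdot 1=0$. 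So both hypotheses of Lemma~\ref{ts} hold.

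Next I would upgrade $\varphi$ from a Jordan epimorphism to a Jordan isomorphism onto $B$, since Lemma~\ref{ts} is stated for isomorphisms. The kernel $N=\ker\varphi$ is a Jordan ideal of $A$, hence by Lemma~\ref{lm} an honest ideal of $A$; then $\varphi$ factors through a Jordan isomorphism $\bar\varphi: A/N\to B$. The quotient $\bar A=A/N$ is again unital, and $\bar A=K(\bar A)$ since $K(A)$ maps onto $K(\bar A)$ and $K(A)=A$. Applying Proposition~\ref{pp} to $\bar\varphi$ (or transporting near-standardness from $\varphi$), $\bar\varphi$ is a nearly-standard Jordan isomorphism onto $B$, so Lemma~\ref{ts} applies and yields a central idempotent $\ve\in Z(B)$ with $x\mapsto\ve\bar\varphi(x)$ a homomorphism and $x\mapsto(1-\ve)\bar\varphi(x)$ an antihomomorphism. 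Composing with the quotient map $A\to\bar A$, the same $\ve$ works for $\varphi$ itself: $\varphi_1:=\ve\varphi$ is a homomorphism $A\to B$, $\varphi_2:=(1-\ve)\varphi$ is an antihomomorphism, $\varphi=\varphi_1+\varphi_2$, and $\varphi_1(A)\varphi_2(A)=\ve(1-\ve)\varphi(A)\varphi(A)=(0)$, so $\varphi$ is standard. I expect the main obstacle to be the bookkeeping around passing to the quotient $A/N$ and verifying that near-standardness and the identities $B=K(B)$, $B=[[B,B],B]\circ B$ survive the passage; the structural lemmas are all in place, so there is no deep new idea needed, only careful tracking of which properties are inherited by homomorphic images.
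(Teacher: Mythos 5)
Your proposal is correct and follows essentially the same route as the paper: reduce to a Jordan isomorphism via Lemma \ref{lm}, get near-standardness from Proposition \ref{pp}, transport $A=[[A,A],A]\circ A$ (Lemma \ref{lefi2}) through $\varphi$ using \eqref{uava} to verify both hypotheses of Lemma \ref{ts} for $B$ (with Lemma \ref{lbid} for the biderivation condition), and conclude. The only cosmetic difference is that you spell out the central-annihilator verification and the descent of $\ve$ back to $\varphi$ in more detail than the paper does, and the paper cites \cite{JR} for the unitality of $B$ rather than asserting it directly.
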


    \begin{proof}  
Lemma \ref{lm} implies that $\ker\varphi$ is an ideal of $A$. Therefore,
    $\varphi$ induces a Jordan isomorphism from the algebra $A/\ker\varphi$ onto the algebra $B$.
Since $A/\ker\varphi$ is also unital and coincides with its commutator ideal, we may assume without loss of generality that $\varphi$ itself  is a Jordan isomorphism.

    Proposition \ref{pp} tells us that $\varphi$
    is nearly-standard. It is therefore enough to show that $B$ 
 satisfies
the assumptions of Lemma \ref{ts}.

First, note that $B$ is also  unital \cite[Corollary 3]{JR}.
Since $A =    [[A,A],A]\circ A$ by Lemma \ref{lefi2} and since \eqref{uava} shows that
$\varphi$ satisfies $$\varphi([[a_1,a_2],a_3]\circ a_4)=[[[\varphi(a_1),\varphi(a_2)],\varphi(a_3)]\circ \varphi(a_4),$$ we have
$B=[[B,B],B]\circ B$. In particular, $B=K(B)$, so   Lemma \ref{lbid} tells us that every biderivation of $B$ is inner. Finally, $B=[[B,B],B]\circ B$ also shows that the second assumption of Lemma \ref{ts} holds. This completes the proof of the theorem.
    \end{proof}

    We leave as an open question whether the assumption that $\varphi$ is surjective  is necessary. As the algebra $B$ is arbitrary, we can rephrase this assumption as  that the image of $\varphi$ is an associative algebra.

    Our aim in  the rest of the section is
to show that  the basic assumption of 
 Proposition \ref{pp} and Theorem \ref{mt1} that $A=K(A)$ is, from a certain viewpoint, close to necessary.
We start with a proposition providing conditions for the existence of non-standard Jordan homomorphisms. It was inspired by \cite[Example 1]{JR}.

\begin{proposition}\label{lapp}
 Let $A$ be a unital algebra, let
 $d$ and $g$ be derivations of $A$, let 
$J$ be an ideal of $A$ and   $\pi:A\to A/J$ be the natural projection.
Then  $B=A\times A/J$ becomes an associative algebra under  componentwise linear structure and multiplication given by
    $$(x,u)\star (y,v) = (xy, \pi(d(x)g(y))+ \pi(x)v+u\pi(y) ).$$
If  
\[[g(x),d(x)]\in J\quad\mbox{for all $x\in A$,}\]
then $\varphi:A\to B$,
$$\varphi(x)= \bigl(x,\frac{1}{2}\pi((dg)(x))\bigr)$$
is a Jordan homomorphism.
Moreover, if there 
exist  $s,t\in A$  such that 
$$g(s)d(t)-d(s)g(t)\notin J,$$
then $\varphi$ is not standard. 
\end{proposition}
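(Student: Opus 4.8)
The plan is to verify the three claims in turn: that $B$ is an associative algebra, that $\varphi$ is a Jordan homomorphism under the hypothesis $[g(x),d(x)]\in J$, and that $\varphi$ fails to be standard when $g(s)d(t)-d(s)g(t)\notin J$ for some $s,t$. The first claim is a direct (if tedious) check of associativity of $\star$; the key point is that the "deformation" term $\pi(d(x)g(y))$ behaves like a Hochschild $2$-cocycle for the bimodule $A/J$ (with $A$ acting through $\pi$), and that $(x,u)\mapsto x$ is clearly a homomorphism onto $A$ with kernel $0\times A/J$ a square-zero ideal. I would compute $((x,u)\star(y,v))\star(z,w)$ and $(x,u)\star((y,v)\star(z,w))$, observe the first coordinates agree, and in the second coordinate check that the terms involving $u,v,w$ match trivially while the remaining identity reduces to $\pi\bigl(d(xy)g(z)\bigr)+\pi(x)\pi\bigl(d(y)g(z)\bigr)=\pi\bigl(d(x)g(y)\bigr)\pi(z)+\pi\bigl(d(x)g(yz)\bigr)$, which follows from $d,g$ being derivations once one expands using the Leibniz rule and cancels.

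For the second claim I would work directly with the characterization of Jordan homomorphisms via $\varphi(x^2)=\varphi(x)^2$ (legitimate since char$(F)\ne 2$, as noted in the excerpt around \eqref{jhtri}). Writing $\varphi(x)=(x,\tfrac12\pi((dg)(x)))$, we get $\varphi(x)\star\varphi(x)=(x^2,\ \pi(d(x)g(x))+\tfrac12\pi(x)\pi((dg)(x))+\tfrac12\pi((dg)(x))\pi(x))=(x^2,\ \pi(d(x)g(x))+\tfrac12\pi((dg)(x)\circ x))$, whereas $\varphi(x^2)=(x^2,\ \tfrac12\pi((dg)(x^2)))$. So the claim amounts to
\[
\pi\bigl((dg)(x^2)\bigr)=2\,\pi\bigl(d(x)g(x)\bigr)+\pi\bigl((dg)(x)\,x + x\,(dg)(x)\bigr).
\]
Now $(dg)(x^2)=d\bigl(g(x)x+xg(x)\bigr)=(dg)(x)\,x+g(x)d(x)+d(x)g(x)+x\,(dg)(x)$, so the required identity reduces exactly to $\pi\bigl(g(x)d(x)\bigr)=\pi\bigl(d(x)g(x)\bigr)$, i.e. to $[g(x),d(x)]\in J$. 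That is the whole content, and it is where the hypothesis is used.

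For the third claim—non-standardness—I would argue by contradiction. Suppose $\varphi=\varphi_1+\varphi_2$ with $\varphi_1$ a homomorphism, $\varphi_2$ an antihomomorphism, and $\varphi_1(A)\varphi_2(A)=\varphi_2(A)\varphi_1(A)=(0)$. The standard identities \eqref{abba}--\eqref{abba2}, or rather the splitting, force $\varphi(xy)-\varphi(x)\varphi(y)=\varphi_2(xy)-\varphi_2(x)\varphi_2(y)-\varphi_2(y)\varphi_1(x)-\varphi_1(y)\varphi_2(x)=\varphi_2(y)\varphi_2(x)-\varphi_2(x)\varphi_2(y)$ once the cross terms are killed; I would instead use the cleaner route: compose with the projection $B\to B$ onto the first coordinate (which is a homomorphism to $A$) to see that $\varphi_1,\varphi_2$ project to a homomorphism plus antihomomorphism decomposition of $\mathrm{id}_A$, and since $A$ is unital with $1$ not decomposable in a nontrivial way unless there is a central idempotent, one extracts a central idempotent $e\in A$ with $\varphi_1(x)=(ex,\ast)$, $\varphi_2(x)=((1-e)x,\ast)$. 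The key computation is then in the second coordinate: for $x,y\in A$,
\[
\varphi(xy)-\varphi(x)\varphi(y)=\bigl(0,\ -\pi(d(x)g(y))\bigr)\ \text{up to the central-component bookkeeping},
\]
and symmetrically $\varphi(yx)-\varphi(y)\varphi(x)$ has second coordinate $-\pi(d(y)g(x))$; the splitting hypothesis would force the ideal generated by all $\pi(d(x)g(y))-\pi(d(y)g(x))=\pi\bigl(d(x)g(y)-d(y)g(x)\bigr)$-type elements to vanish in the relevant component, contradicting $g(s)d(t)-d(s)g(t)\notin J$ (note $d(s)g(t)-d(t)g(s)$ and $g(s)d(t)-d(s)g(t)$ differ by something in $J$ using the hypothesis $[g(x),d(x)]\in J$ linearized). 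I expect this last claim to be the main obstacle: one must carefully track how the square-zero ideal $0\times A/J$ interacts with any purported decomposition, rule out that the decomposition is "trivial" (e.g. $\varphi_1=\varphi$, $\varphi_2=0$), and correctly identify which obstruction element must vanish. The cleanest formulation is probably: if $\varphi$ were standard then $\varphi(xy)-\varphi(x)\varphi(y)$ and $\varphi(xy)-\varphi(y)\varphi(x)$ would generate ideals with zero product, hence (since $B$ is "almost commutative" modulo the square-zero part) both the symmetric and antisymmetric parts of the cocycle $\pi(d(x)g(y))$ would have to be controlled, and evaluating at $x=s$, $y=t$ yields the contradiction.
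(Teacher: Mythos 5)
Your verifications of the first two claims are correct and complete (indeed more detailed than the paper, which dismisses both as routine): associativity does reduce to the Leibniz identity for the term $\pi(d(x)g(y))$, and $\varphi(x^2)=\varphi(x)^2$ does reduce exactly to $\pi(g(x)d(x))=\pi(d(x)g(x))$, i.e.\ to $[g(x),d(x)]\in J$. The problem is the third claim, which is the actual content of the proposition, and there your argument has a genuine gap: you set up the right framework (write $\varphi_i(x)=(\psi_i(x),\theta_i(x))$, project to the first coordinate, extract a central idempotent $e$ with $\psi_1(x)=ex$, $\psi_2(x)=(1-e)x$) but then stop and say you ``expect this last claim to be the main obstacle,'' without deriving the contradiction. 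The missing steps, which are exactly what the paper's proof supplies, are: (i) use the orthogonality relation $\varphi_1(1)\star\varphi_2(x)=0$ together with the fact that derivations kill central idempotents to pin down the second components, namely $\theta_1(x)=\pi(e)\theta_1(x)$ and $\theta_2(x)=\pi(f)\theta_2(x)$ with $f=1-e$, whence $\pi(e)\theta_1(x)+\pi(f)\theta_2(x)=\frac12\pi((dg)(x))$; (ii) compute $\varphi(st)$ in two ways, directly via Leibniz and as $\varphi_1(s)\star\varphi_1(t)+\varphi_2(t)\star\varphi_2(s)$, compare second coordinates to get $\frac12 d(s)g(t)+\frac12 g(s)d(t)-e\,d(s)g(t)-f\,g(s)d(t)\in J$, and then multiply by $e$ and by $f$ separately to conclude $g(s)d(t)-d(s)g(t)\in J$ for \emph{all} $s,t$, contradicting the hypothesis.

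Two of the intermediate assertions in your sketch are also off and would derail the argument if pursued literally. First, the second coordinate of $\varphi(xy)-\varphi(x)\star\varphi(y)$ is $\frac12\pi\bigl(g(x)d(y)-d(x)g(y)\bigr)$, not $-\pi(d(x)g(y))$; the ``central-component bookkeeping'' you defer is precisely where the factor $\frac12$ and the symmetrization come from, and it matters because the obstruction is the \emph{antisymmetric} part of the cocycle. Second, your proposed ``cleanest formulation''---that standardness would force the ideals generated by $\varphi(xy)-\varphi(x)\varphi(y)$ and $\varphi(xy)-\varphi(y)\varphi(x)$ to have zero product---cannot yield a contradiction by itself, since the relations \eqref{abba} and \eqref{abba2} already hold for \emph{every} Jordan homomorphism; non-standardness here is not detectable at that level and really does require tracking how a hypothetical decomposition interacts with the square-zero ideal $0\times A/J$, which is the computation you left out.
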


\begin{proof}
Note first that $\star$ is an associative operation, so $B$ is indeed an algebra. It is straightforward to check that $\varphi$ is a Jordan homomorphism if $[g(x),d(x)]\in J$ for every $x\in A$.

Assume that $\varphi$ is standard, i.e., $\varphi=\varphi_1+\varphi_2$ where  $\varphi_1:A\to B$ is a homomorphism, $\varphi_2:A\to B$  is an antihomomorphism, and $\varphi_1(A)\star\varphi_2(A)= \varphi_2(A)\star\varphi_1(A)=(0)$. Write $$\varphi_1(x)=(\psi_1(x), \theta_1(x))\quad\mbox{and} \quad\varphi_2(x)=(\psi_2(x), \theta_2(x)),$$ where $\psi_i:A\to A$, $\theta_i:A\to A/J$, $i=1,2$. It is immediate that $\psi_1$ is a homomorphism of $A$, $\psi_2$ is an antihomomorphism of $A$, $\psi_1(A)\psi_2(A)=\psi_2(A)\psi_1(A)=(0)$, and $$x=\psi_1(x)+\psi_2(x), \quad \frac{1}{2}\pi((dg)(x)) =\theta_1(x)+\theta_2(x)$$ for every $x\in A$. 
In particular, $1=e+f$ where
$e=\psi_1(1)$ and $f=\psi_2(1)$. Obviously, $e$ and $f$ are orthogonal idempotents and $$\psi_1(x)=\psi_1(x)e = e\psi_1(x),\quad \psi_2(x)=\psi_2(x)f = f\psi_2(x).$$ From $(x-\psi_1(x))e =\psi_2(x)\psi_1(1) =0$ 
we see that $\psi_1(x)= xe$. Similarly, $\psi_1(x)=ex$, so $e$ is a central idempotent. Accordingly, $\psi_2(x)=fx$
and, since $\psi_2$ is an antihomomorphism, 
$fA$ is a commutative ideal of $A$. 

As derivations send central idempotents to $0$,   it follows from
$\varphi_1(1)\star\varphi_2(x)=0$
that $$\pi(e)\theta_2(x) + \theta_1(1)\pi(f)\pi(x)=0.$$
Multiplying by $\pi(e)$ we obtain $\pi(e)\theta_2(x)=0$. Therefore, $\theta_2(x)=\pi(f)\theta_2(x)$. Similarly, $\theta_1(x)=\pi(e)\theta_1(x)$. Consequently,
\begin{equation}
    \label{lisb}
\pi(e)\theta_1(x) + \pi(f)\theta_2(x)=\frac{1}{2}\pi((dg)(x)).\end{equation}

Now take any $s,t\in A$. Our goal is to show that 
$g(s)d(t)-d(s)g(t)\in J$. To this end,  consider $\varphi(st)$. On the one hand, 
\begin{align*}
    &\varphi(st)=\bigl(st, \frac{1}{2}\pi((dg)(st))\bigr) \\=& \bigl(st, \frac{1}{2}\pi((dg)(s))\pi(t) + \frac{1}{2}\pi(d(s)g(t)) +\frac{1}{2}\pi(g(s)d(t)) + \frac{1}{2}\pi(s)\pi((dg)(t))\bigr). 
\end{align*}
On the other hand,
\begin{align*}
\varphi(st)=&\varphi_1(st)+ \varphi_2(st)=\varphi_1(s)\star \varphi_1(t) + \varphi_2(t)\star \varphi_2(s)\\
=&\bigl(es,\theta_1(s))\star (et,\theta_1(t)) + (ft,\theta_2(t))\star (fs,\theta_2(s)\bigr).\end{align*}
Using the commutativity of $\pi(f)A/J$ and the fact that
 $d$ and $g$ vanish on central idempotents $e$ and $f$,  it  follows that
\begin{align*}
\varphi(st)=& \bigl(est, \pi(ed(s)g(t)) + \pi(e)\pi(s)\theta_1(t) + \pi(e)\theta_1(s)\pi(t)\bigr)\\& +
\bigl(fst, \pi(fg(s)d(t)) +\pi(f)\theta_2(s)\pi(t) + \pi(f)\pi(s)\theta_2(t)\bigr)\\
=& \bigl(st, \pi(ed(s)g(t)) + \pi(fg(s)d(t))  \\&+\pi(s)(\pi(e)\theta_1(t) +\pi(f)\theta_2(t)) + (\pi(e)\theta_1(s)+\pi(f)\theta_2(s))\pi(t)\bigr).
\end{align*}
Comparing both expressions and using \eqref{lisb}, we obtain
$$\frac{1}{2}d(s)g(t)  + \frac{1}{2} g(s)d(t) - ed(s)g(t) - fg(s)d(t)\in J.$$
Multiplying by $e$, it follows that $$e(g(s)d(t)-d(s)g(t))\in J,
$$ 
and multiplying by $f$, it follows that $$f(g(s)d(t)-d(s)g(t))\in J.
$$ 
As $e+f=1$,  the desired conclusion follows.  
\end{proof}

\begin{example} \label{snx}
Proposition \ref{lapp} shows that
all we need for the existence of a non-standard Jordan homomorphism from a commutative algebra $A$  to some algebra $B$ is a pair of derivations $d,g$ of $A$ that satisfy $g(s)d(t)\ne d(s)g(t)$ for some $s,t\in A$. Concrete examples can be easily found. As already mentioned above, such Jordan homomorphisms     are automatically nearly-standard.
\end{example}

Let us also give an example of a non-standard but nearly-standard Jordan homomorphism on a noncommutative algebra.

\begin{example}\label{examb}Let $A$ be the Grassmann algebra on $n$ generators $e_1,\dots,e_n$, where $n\ge 4$. For any $a,b,x\in A$, we have
$$[a,x][b,x] = [b,[ax,x]] -[b,[a,x]]x.$$ 
Since $[[A,A],A]=(0)$, this gives $[a,x][b,x] = 0$. Thus, every pair of inner derivations $d$ and $g$ satisfies $$d(x)g(x)=0=g(x)d(x)$$ for every $x\in A$. Taking $d(x)=[e_1,x]$ and $g(x)=[e_2,x]$, we have $$g(e_3)d(e_4) - d(e_3)g(e_4)=
8e_1e_2e_3e_4\ne 0.$$
Therefore, $d$ and $g$ satisfy conditions of Proposition \ref{lapp} (for $J=(0)$). Since $dg=0$, $\varphi(x)=(x,0)$ is a Jordan homomorphism that is not standard. However,
$\varphi$ is nearly-standard. Indeed, the maps
$$\varphi_1(x)= (x,e_1[e_2,x])\quad\mbox{and}\quad\varphi_2(x)= (0,-e_1[e_2,x])$$ satisfy $\varphi=\varphi_1 + \varphi_2$ and
$\varphi(xy)=\varphi(x)\star\varphi_1(y) + \varphi_2(y)\star\varphi(x)$ for all $x,y\in A$.
\end{example}

\begin{corollary}\label{cfin}
    Let $A$ be a unital algebra.
\begin{enumerate}
    \item[\rm (a)] If $A=K(A)$, then, for any commutative unital $F$-algebra $C$ and any $F$-algebra $B$, every Jordan epimorphism $\varphi:A\otimes C\to B$ is standard.
     \item[\rm (b)] If $A=K(A)$, then, for any commutative unital $F$-algebra $C$ and any $F$-algebra $B$, every Jordan homomorphism $\varphi:A\otimes C\to B$ is nearly-standard.
       \item[{\rm (c)}] If $A\ne K(A)$, then there exists a commutative unital algebra $C$, an $F$-algebra $B$, and a 
       Jordan homomorphism $\varphi:A\otimes C\to B$ that is not standard.
       
       \item[{\rm (d)}] If $A\ne K(A)+Z(A)$, then there exists a commutative unital algebra $C$, an $F$-algebra $B$, and a 
       Jordan homomorphism $\varphi:A\otimes C\to B$ that is not nearly-standard.
\end{enumerate}
    \end{corollary}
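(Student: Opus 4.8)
For parts (a) and (b) the plan is to apply Theorem~\ref{mt1} and Proposition~\ref{pp} to the algebra $A\otimes C$ in place of $A$. This algebra is unital with unit $1_A\otimes 1_C$, and since $C$ is commutative its commutator ideal contains every element $a[x,y]b\otimes c=(a\otimes c)\,([x,y]\otimes 1_C)\,(b\otimes 1_C)$, hence contains $K(A)\otimes C$; when $A=K(A)$ the latter is all of $A\otimes C$, so $A\otimes C=K(A\otimes C)$. Thus Theorem~\ref{mt1} yields (a) and Proposition~\ref{pp} yields (b).

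For (c) and (d) I would use one common construction. Take $C=F[\xi_1,\xi_2]$, let $d$ and $g$ be the derivations of $A\otimes C$ induced by $\partial/\partial\xi_1$ and $\partial/\partial\xi_2$ on $C$, put $J=K(A)\otimes C$ (an ideal of $A\otimes C$ for which $(A\otimes C)/J\cong (A/K(A))\otimes C$ is commutative), and let $\pi\colon A\otimes C\to (A\otimes C)/J$ be the quotient map. Since $C$ is commutative and commutators in $A$ lie in $K(A)$, one checks $[g(x),d(x)]\in K(A)\otimes C=J$ for every $x\in A\otimes C$, so Proposition~\ref{lapp} produces an $F$-algebra $B=(A\otimes C)\times (A\otimes C)/J$, with the twisted product described there, together with the Jordan homomorphism $\varphi\colon A\otimes C\to B$, $\varphi(x)=\bigl(x,\tfrac12\pi((dg)(x))\bigr)$. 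For $s=1_A\otimes\xi_1$ and $t=1_A\otimes\xi_2$ one gets $g(s)d(t)-d(s)g(t)=-\,1_A\otimes 1_C$, which lies in $J$ if and only if $1_A\in K(A)$, i.e.\ $A=K(A)$. Hence, when $A\ne K(A)$, the last part of Proposition~\ref{lapp} shows that $\varphi$ is not standard; this is (c).

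For (d), assume $A\ne K(A)+Z(A)$ and consider the same $\varphi$. The key reduction is the observation that a Jordan homomorphism $\varphi\colon A'\to B$ into an arbitrary algebra $B$ is nearly-standard if and only if there is a map $\psi\colon A'\to B$ with $\varphi(xy)-\varphi(x)\varphi(y)=[\psi(y),\varphi(x)]$ for all $x,y\in A'$ (given such $\psi$, the pair $\varphi_2=\psi$, $\varphi_1=\varphi-\psi$ is a near-standard splitting; conversely $\psi=\varphi_2$). Apply this with $A'=A\otimes C$ and write $\psi(y)=(p(y),q(y))$. Comparing the first components of the identity $\varphi(xy)-\varphi(x)\varphi(y)=[\psi(y),\varphi(x)]$ forces $[p(y),x]=0$ for all $x$, i.e.\ $p(y)\in Z(A\otimes C)=Z(A)\otimes C$; comparing the second components, where two commutator terms drop out because $(A\otimes C)/J$ is commutative, yields $\pi\bigl(d(p(y))g(x)\bigr)-\pi\bigl(d(x)g(p(y))\bigr)=\tfrac12\pi\bigl(g(x)d(y)-d(x)g(y)\bigr)$ for all $x,y\in A\otimes C$. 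Specializing $x=1_A\otimes\xi_1$ gives $\pi(g(p(y)))=\tfrac12\pi(g(y))$, and then $y=a\otimes\xi_2$, for arbitrary $a\in A$, gives $\pi\bigl(g(p(a\otimes\xi_2))\bigr)=\tfrac12(\ov a\otimes 1_C)$, where $\ov a$ denotes the image of $a$ in $A/K(A)$. On the other hand $p(a\otimes\xi_2)\in Z(A)\otimes C$, so $\pi(g(p(a\otimes\xi_2)))$ lies in $\ov{Z(A)}\otimes C$, where $\ov{Z(A)}=(Z(A)+K(A))/K(A)$. Using the splitting $C=F\cdot 1_C\oplus C_{+}$, with $C_{+}$ the span of the monomials of positive degree, and comparing the $\,\otimes 1_C$ components, we get $\ov a\in\ov{Z(A)}$, i.e.\ $a\in Z(A)+K(A)$. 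Since $a\in A$ was arbitrary, this contradicts $A\ne K(A)+Z(A)$, so $\varphi$ is not nearly-standard, proving (d).

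I expect (a)--(c) to be routine: (a) and (b) are direct applications of Theorem~\ref{mt1} and Proposition~\ref{pp} to $A\otimes C$, and (c) is an immediate instance of Proposition~\ref{lapp}. The main obstacle is (d) --- first, isolating the criterion ``$\varphi$ nearly-standard $\iff$ $\varphi(xy)-\varphi(x)\varphi(y)=[\psi(y),\varphi(x)]$ is solvable for $\psi$'', and then pushing the computation inside $B=(A\otimes C)\times (A\otimes C)/J$ far enough to land precisely on the obstruction $a\in Z(A)+K(A)$; the auxiliary identities $Z(A\otimes C)=Z(A)\otimes C$ and ``$\ov a\otimes 1_C\in\ov{Z(A)}\otimes C$ implies $\ov a\in\ov{Z(A)}$'' are standard but should be noted in passing.
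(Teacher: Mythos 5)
Your proposal is correct and follows essentially the same route as the paper: (a) and (b) via $K(A\otimes C)=A\otimes C$ and Theorem \ref{mt1}/Proposition \ref{pp}, and (c), (d) via the Proposition \ref{lapp} construction with $C=F[\xi_1,\xi_2]$, the two partial-derivative derivations, and $J=K(A)\otimes C$. Your treatment of (d) — reducing near-standardness to solvability of $\varphi(xy)-\varphi(x)\varphi(y)=[\psi(y),\varphi(x)]$, forcing $p(y)\in Z(A\otimes C)$ from the first component, and specializing to land on $\bar a\in\overline{Z(A)}$ — matches the paper's computation up to an immaterial swap of the roles of $\xi_1$ and $\xi_2$.
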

   
\begin{proof}
Observe that $K(A\otimes C) = K(A)\otimes C$ for any commutative unital algebra $C$. Therefore, $K(A)=A$ implies
$K(A\otimes C)=A\otimes C$.  Thus, (a) follows from Theorem \ref{mt1} and (b) follows from Proposition \ref{pp}.

To prove (c) and (d), take
$C$ to be the polynomial algebra $F[x_1,x_2]$. Note that $d= 1_A\otimes \frac{\partial \,}{\partial x_1}$ and $g= 1_A\otimes \frac{\partial \,}{\partial x_2}$ are derivations of $A\otimes C$. Let $J=K(A\otimes C)$, let $\pi:A\otimes C\to (A\otimes C)/J$ be the natural projection,
and let $$B=(A\otimes C)\times (A\otimes C)/J$$ be the algebra  from Proposition \ref{lapp}. The condition
$[d(x),g(x)]\in J$, $x\in A\otimes C$, is  fulfilled since $J$ is the commutator ideal. Therefore,
$$\varphi(x)=\bigl(x,\frac{1}{2}\pi((dg)(x))\bigr)$$ is a Jordan homomorphism from $A\otimes C$ to $B$.

Assume that $A\ne K(A)$. Then $J=K(A)\otimes C\ne A\otimes C$.
Write $s=1\otimes x_2$ and $t=1\otimes x_1$. Then $g(s)=d(t)=1\otimes 1$ and
$d(s)=g(t)=0$. Accordingly,
$g(s)d(t)-d(s)g(t)=1\otimes 1\notin J$ and hence  Proposition \ref{lapp} tells us that $\varphi$  is not standard. This proves (c).

Finally, assume that $A\ne K(A) + Z(A)$. Suppose that
$\varphi$ is nearly-standard. As noticed at the beginning of the proof of Lemma \ref{ts}, this implies that there exists a mapping $\varphi_2:A\otimes C\to B$ such that
\begin{equation}\label{lrside}
    \varphi(xy)- \varphi(x)\star\varphi(y)= \varphi_2(y)\star \varphi(x) - \varphi(x)\star \varphi_2(y)
\end{equation}
for all $x,y\in A\otimes C$. A short calculation shows that the left-hand side of \eqref{lrside} is equal to
$$\frac{1}{2}\bigl(0,\pi(g(x)d(y)-d(x)g(y))\bigr).$$
Write $\varphi_2(y)=(\alpha(y),\beta(y))$, where
$\alpha:A\otimes C\to A\otimes C$ and $\beta:A\otimes C\to (A\otimes C)/J$. Using the commutativity of $(A\otimes C)/J$, one directly computes that the right-hand side of \eqref{lrside} is equal to
$$\bigl([\alpha(y),x], \pi (g(x)d(\alpha(y)) - d(x)g(\alpha(y)))\bigr).$$
Comparing the above two expressions, we obtain
$$[\alpha(x),y]=0\quad\mbox{and}\quad \pi\bigl(g(x)d(y-2\alpha(y)) - d(x)g(y-2\alpha(y))\bigr)=0$$
for all $x,y\in A\otimes C$. The first equation shows that $\alpha(x)\in Z(A\otimes C)$ for every $x\in A\otimes C$. Write $1\otimes x_2$ for $x$ 
 in the second equation. Then
$g(x)=1\otimes 1$ and $d(x)=0$, 
so we obtain
$d(y - 2\alpha(y))\in J$ for every $y\in A\otimes C$. 
Since $\alpha(y)$ lies in $ Z(A\otimes C)$, so does $d(\alpha(y))$. Therefore,
$d(y)\in Z(A\otimes C) + J$. In particular,
$$a\otimes 1 =d(a\otimes x_1) \in Z(A\otimes C) + J = Z(A)\otimes C + K(A)\otimes C$$
for every $a\in A$.
However, this cannot hold for $a$ not lying in $K(A) + Z(A)$. With this contradiction, we  have completed the proof of  (d).
\end{proof}

Recall that every Jordan homomorphism on a commutative algebra is nearly-standard. The assumption that $A\ne K(A)+Z(A)$ in (d) therefore cannot be replaced by $A\ne K(A)$.

\section{Jordan homomorphisms on a certain T-ideal}\label{sec3}

As above, we assume throughout this section that $A$ and $B$  are  associative $F$-algebras, where $F$ is a field with char$(F)\ne 2$.

Let $A^{\rm op}$ denote the opposite algebra of $A$. That is, $A^{\rm op}$ has the same linear structure, but $a^{\rm op}b^{\rm op} = (ba)^{\rm op}$ for all  $a,b\in A$. Consider the natural Jordan homomorphism $s:A\to A\oplus A^{\rm op}$,
$s(a)= a + a^{\rm op}$. The image of $s$ is the Jordan algebra of symmetric elements with respect to the exchange involution ${\rm ex}:a+ b^{\rm op}\mapsto b + a^{\rm op}$.

It is easy to see that a Jordan homomorphism $\varphi:A\to B$ is  standard if  there exists a homomorphism $\chi:A\oplus A^{\rm  op}  \to B$  such that the diagram
$$\xymatrixcolsep{5pc}\xymatrixrowsep{3pc}\xymatrix{A  \ar@{->}[r]^s \ar@{>}[dr]^{\varphi} & A\oplus A^{\rm  op} \ar@{>}[d]^{\chi}
  \\
 & B
}
$$
is commutative. 


Let $J$ be either $A$ or $H(A,*)$
and $\varphi:J\to B$ be a Jordan homomorphism. Write $B'$
for $\langle \varphi(J)\rangle$, i.e., $B'$ 
is the subalgebra of $B$ generated by the image $\varphi(J)$ in $B$. Let 
$${\rm Ann}(B')=\{b\in B'\,|\, bB'=B'b=(0)\}.$$ 
Clearly, $\varphi:J\to B$ gives rise to the Jordan homomorphism $\overline{\varphi}:J\to B'/{\rm Ann}(B')$, the composition
of $\varphi$ and the natural projection $B'\to B'/{\rm Ann}(B')$. We call $\varphi$ {\bf annihilator-by-standard} if $\overline{\varphi}$ is standard.

Every Jordan homomorphism $\varphi:A\to B$ gives rise to the Jordan homomorphism $H(A\oplus A^{\rm op},{\rm ex})\to B$,
$a+a^{\rm op}\mapsto \varphi(a)$. The Jordan homomorphism $\varphi$ is standard (resp.\ annihilator-by-standard) if and only if the Jordan homomorphism $H(A\oplus A^{\rm op},{\rm ex})\to B$ is standard (resp.\ annihilator-by-standard).

Let $X=\{x_1,x_2,\dots\}$. Let $F\langle X\rangle$ be the free associative algebra on the set of generators $X$.
When considering T-ideals of $F\langle X\rangle$, we always assume that they are
closed with respect to partial linearizations.

Let $G\triangleleft F\langle X\rangle$ 
be a ${\rm T}$-ideal. If $J=A$ then 
$J_G=G(A)$, and if $J=H(A,*)$ then $J_G = H(G(A),*)$. It is clear that the ideal $G(A)$ is invariant with respect to any involution.

The following is the main result of this section.

\begin{theorem}\label{te}
    There exists a ${\rm T}$-ideal $G\triangleleft F\langle X\rangle$ with the following properties:
    \begin{enumerate}
        \item If $A,B$ are associative algebras,
        $J=A$ or $J=H(A,*)$, and
        $\varphi:J\to B$ is a
        Jordan homomorphism, then the  restriction of $\varphi$ to $J_G$ is annihilator-by-standard.  
        \item Let $T(M_2)$ be the {\rm T}-ideal of identities of the algebra of $2\times 2$ matrices. Then every element of  
        $T(M_2)$ is nilpotent modulo the 
        ${\rm T}$-ideal  $G$.
    \end{enumerate}
\end{theorem}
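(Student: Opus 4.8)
The strategy is to build $G$ out of the ``tetrad-eating'' T-ideal $T$ from \cite{Z} together with the identities of $M_2(F)$, and then to transport the McCrimmon-type argument through the homomorphism $s:A\to A\oplus A^{\mathrm{op}}$. Concretely, I would first reduce the case $J=A$ to the case $J=H(A,*)$: as explained in the excerpt, a Jordan homomorphism $\varphi:A\to B$ is standard (resp.\ annihilator-by-standard) exactly when the induced Jordan homomorphism $H(A\oplus A^{\mathrm{op}},\mathrm{ex})\to B$ is, and $G(A\oplus A^{\mathrm{op}})=G(A)\oplus G(A)^{\mathrm{op}}$, so it suffices to treat symmetric elements of an involutive algebra. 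Second, I would recall from \cite{Z} that the free special Jordan algebra modulo $T$ ``eats tetrads'', i.e.\ on any special Jordan algebra $J$ with $J=T(J)$ the tetrad $\{a,b,c,d\}=abcd+dcba$ lies back in $J$; McCrimmon's theorem \cite{Mc} then says every Jordan homomorphism out of such a $J$ is standard. The point of the present theorem is that we do not get to assume $J=T(J)$, only that we are working on the sub-Jordan-algebra $J_G=H(G(A),*)$.

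The heart of the matter is choosing $G$ so that the Jordan subalgebra $H(G(A),*)$ behaves, for the purposes of the tetrad argument, like a Jordan algebra satisfying $J=T(J)$, while simultaneously being small enough that $T(M_2)$ becomes nil modulo $G$. The natural candidate is to take $G$ to be the T-ideal generated by $T$ evaluated on sufficiently many ``commutator-type'' building blocks together with $T(M_2)^{(k)}$ for suitable $k$ — more precisely, I would set $G$ to be a T-ideal lying inside $([x_1,x_2])^{\mathrm T}$ for which one can prove, by a Kaplansky/Amitsur primitive-quotient analysis in the spirit of Lemma \ref{lef1}, that any prime (or primitive) associative algebra $P$ with $P=G(P)$ that is not commutative must fail to satisfy all identities of $M_2$, hence $H(P,*)=H(G(P),*)$ contains enough tetrads for McCrimmon's machinery to split $\varphi$. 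Passing to $B'=\langle\varphi(J_G)\rangle$ and modding out by $\mathrm{Ann}(B')$ (which by definition is what ``annihilator-by-standard'' allows) should let us assume $B'/\mathrm{Ann}(B')$ is semiprime, so that a subdirect decomposition into prime pieces is available and the splitting idempotent can be assembled globally from the central idempotents produced piece-by-piece by McCrimmon's result — this is the same philosophy as the central idempotent $\ve$ in Lemma \ref{ts} and Theorem \ref{mt1}, and the trivial-annihilator hypothesis is exactly what makes the biderivation / central-element bookkeeping go through.

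For property (2), once $G$ is defined I would argue as follows: let $f\in T(M_2)$; we must show $f^N\in G$ for some $N$. Equivalently, $F\langle X\rangle/G$ has nil image of $T(M_2)$. Using the same primitive-quotient argument, every primitive quotient of $F\langle X\rangle/G$ is either commutative or a matrix ring $M_k(F)$ with $k\ge 2$ lying in the variety defined by $G$; if $G$ is built to contain $T$ on commutators, the commutative quotients kill $T(M_2)\subseteq([x_1,x_2])^{\mathrm T}$, and the non-commutative quotients, being homomorphic images where $G$ vanishes, must satisfy $f$ if $f\in T(M_2)$ and $k=2$, while for $k>2$ one uses that $T(M_2)\not\subseteq T(M_k)$ only in a controlled way — so on the semiprimitive quotient $f$ maps to $0$, i.e.\ $f\in J(F\langle X\rangle/G)$. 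Since $F\langle X\rangle/G$ is (relatively) finitely generated in each degree and its Jacobson radical is nil on such pieces (a Braun/Kemer–Razmyslov nilpotency-of-the-radical input for affine PI-algebras), $f$ is nilpotent modulo $G$.

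\textbf{Main obstacle.}
The genuinely hard step is the second paragraph: producing a single T-ideal $G\subseteq([x_1,x_2])^{\mathrm T}$ that is at once (a) large enough that on $H(G(A),*)$ the tetrad-eating identity of \cite{Z} is available so that McCrimmon's splitting applies after factoring out $\mathrm{Ann}(B')$, and (b) small enough that $T(M_2)$ is nil modulo $G$ (so $G$ cannot contain $T(M_2)$ outright, nor can it force $2\times2$-matrix behaviour away). Balancing these is delicate because the tetrad-eating phenomenon \cite{Z} is precisely what fails for $2\times 2$ matrices, so $G$ must ``see'' enough of $T$ without trivializing on $M_2$; the right construction is presumably $G=\big(T(x_1,\dots)\big)^{\mathrm T}$ with the free generators replaced by generic ``large'' commutator polynomials, and verifying both (a) and (b) for that choice — the compatibility of the generic substitution with the linearization-closedness and with the Kaplansky–Amitsur reduction — is where the real work lies. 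The surrounding arguments (reduction to $H(A,*)$, the annihilator quotient, gluing central idempotents) are, by contrast, routine adaptations of Lemmas \ref{lef1}, \ref{ts}, \ref{lm} and Theorem \ref{mt1}.
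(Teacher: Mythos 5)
Your plan correctly identifies two of the ingredients (the tetrad-eating T-ideal $T$ of \cite{Z} and the reduction of $J=A$ to $J=H(A\oplus A^{\rm op},{\rm ex})$), but it does not actually construct $G$, and the mechanism you propose for exploiting the annihilator quotient would fail. The paper's $G$ is $\widetilde{T''}$, where $P\mapsto P'$ sends a Jordan T-ideal $P$ to the ideal generated by all $(x\circ y)\circ z-(x\circ z)\circ y$ with $x\in P^3$, $y,z\in P$ (so $T''=(T')'$), and $\widetilde{P}$ is the associative T-ideal furnished by Amitsur's theorem on rings with involution \cite{Am1}, satisfying $\widetilde{P}(A)\subseteq AP(H(A,*))A$. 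The crux, which your proposal does not supply, is the chain of Lemmas \ref{lema2}--\ref{lema5}: the tetrad-eating property together with Cohn's theorem \cite{Cohn} forces every element of $I_T=\ker(U_T\to A)$ to be skew under the canonical involution $\tau$ of the special universal envelope $U$ of $H(A,*)$; hence $I_{T^3}$ is central in $U$; hence $I_{T'}$ annihilates $U_{T'}$. Since $\chi$ therefore maps the obstruction ideal $I_{T'}$ into ${\rm Ann}(B_{T'})$, the Jordan homomorphism extends to an associative homomorphism of $A_{T'}$ after factoring out the annihilator, and $H(G(A),*)\subseteq T'(H(A,*))\subseteq A_{T'}$ by tetrad-eating. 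Your alternative --- pass to $B'/{\rm Ann}(B')$, claim it is semiprime, decompose subdirectly into primes, apply McCrimmon's theorem piecewise and glue central idempotents --- does not work: factoring out the two-sided annihilator does not make an algebra semiprime (the strictly upper triangular $3\times 3$ matrices modulo their annihilator $Fe_{13}$ are a nonzero algebra with zero multiplication), so no subdirect decomposition into primes is available; moreover McCrimmon's theorem \cite{Mc} requires a \emph{unital} Jordan algebra $J$ with $J=T(J)$, which you have no means of arranging on the pieces. The biderivation and central-idempotent machinery you invoke belongs to Section~\ref{sec2} and plays no role here.

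For part (2) the paper again uses Amitsur's argument directly: the only finite-dimensional simple involutive algebras $(A_i,*_i)$ with $T''(H(A_i,*_i))=(0)$ are $F$, $F\oplus F$ with exchange, $M_2(F)$ with either involution, and $M_2(F)\oplus M_2(F)^{\rm op}$ with exchange (this is where the classification from \cite{Z} enters), and any $f$ that is an identity of all of these has a power lying in $\widetilde{T''}=G$; every $f\in T(M_2)$ is such an identity. Your route through the Jacobson radical of $F\langle X\rangle/G$ and a Braun/Kemer--Razmyslov nilpotence theorem is both circular (it presupposes knowing the primitive quotients of $F\langle X\rangle/G$, i.e., knowing $G$, which you have not defined) and technically misapplied, since $F\langle X\rangle/G$ is not affine. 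In short, the proposal is a research plan whose central step --- the definition of $G$ and the verification that the annihilator absorbs the obstruction to associativity of the extension --- is exactly the part left open, and the specific devices proposed to fill it (semiprimeness after killing the annihilator, piecewise McCrimmon) are not viable.
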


The proof will be given after a series of lemmas.

We will consider only the case  $J=H(A,*)$. 
The case $J=A$ is easier and, besides, $A^{(+)}\cong H(A\oplus A^{\rm op}, {\rm ex})$ (as usual, $A^{(+)}$ is the Jordan algebra which is equal to $A$ as a linear space and is equipped with the product
$x\circ y= xy+yx$).

Without loss of generality we will assume that the field $F$ is algebraically closed. Indeed, let $\overline{F}$ be the algebraic closure of $F$. The involution 
$*$ extends to the $\overline{F}$-involution on $\overline{A}=A\otimes _F \overline{F}$ and the Jordan homomorphism $\varphi$ extends to the Jordan homomorphism $\overline{\varphi}:H(\overline{A},*)\to
\overline{B}=B\otimes _F \overline{F}$. We have $G(\overline{A})=G(A)\otimes_F \overline{F}$ and 
Ann$(\overline{B})=
{\rm Ann}(B)\otimes_F \overline{F}$. If the homomorphism $\overline{\varphi}$
is annihilator-by-standard, then so is the homomorphism $\varphi$.

In the Jordan algebra $F\langle X\rangle^{(+)}$
we consider the Jordan subalgebra
$SJ\langle X\rangle$ generated by $X$. We refer to elements from $SJ\langle X\rangle$ as  Jordan polynomials.

Let $J\langle X\rangle$ be the free Jordan $F$-algebra on the set of free generators $X$.

\begin{lemma}\label{lema1}
    For an arbitrary ${\rm T}$-ideal 
    $P$ of the free Jordan algebra $J\langle X\rangle$ such that
    $P(SJ\langle X\rangle)\ne (0)$ there exists
    a nonzero ${\rm T}$-ideal $\widetilde{P}$ of the free associative algebra $F\langle X\rangle$
    such that $\widetilde{P}(A)\subseteq AP(H(A,*))A$.
\end{lemma}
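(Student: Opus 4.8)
The plan is to manufacture $\widetilde P$ as the T-ideal generated by a single associative polynomial obtained from a Jordan polynomial witnessing $P(SJ\langle X\rangle)\ne(0)$. Pick $f\in P$ with $f(SJ\langle X\rangle)\ne(0)$; then there is a Jordan polynomial $p(x_1,\dots,x_n)\in SJ\langle X\rangle$ that, viewed inside $F\langle X\rangle^{(+)}$, is a nonzero element of the associative free algebra and lies in $P(SJ\langle X\rangle)$. Since $SJ\langle X\rangle$ sits inside $F\langle X\rangle^{(+)}$, $p$ is literally an associative polynomial; let $\widetilde P=(p)^{\rm T}=F\langle X\rangle\, p(F\langle X\rangle)\,F\langle X\rangle$, which is a nonzero T-ideal of $F\langle X\rangle$. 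The substance of the proof is to show $\widetilde P(A)\subseteq A\,P(H(A,*))\,A$ for every associative algebra $A$ with involution $*$.

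The first key step is to evaluate $p$ on symmetric arguments. For arbitrary $a_1,\dots,a_n\in A$, the elements $h_i=a_i+a_i^*\in H(A,*)$, and because $p$ is a Jordan polynomial, $p(h_1,\dots,h_n)$ computed via the Jordan product $\circ$ equals the value of $p$ as an element of $P(H(A,*))$ — here I use that $P$ is a T-ideal of $J\langle X\rangle$ and $p$ represents an element of $P$ under the universal map $J\langle X\rangle\to SJ\langle X\rangle$, so every evaluation of $p$ on symmetric elements lands in $P(H(A,*))$. The second step is the standard "doubling/symmetrization" device: since $A\hookrightarrow A\oplus A^{\rm op}$ via $a\mapsto a+a^{\rm op}$ has image contained in $H(A\oplus A^{\rm op},{\rm ex})$, and since evaluating the associative polynomial $p$ on the elements $a_i+a_i^{\rm op}$ reproduces, in the first coordinate, the associative value $p(a_1,\dots,a_n)$, we get that $p(a_1,\dots,a_n)$ — as an element of $A$ — is recovered from an element of $P(H(A\oplus A^{\rm op},{\rm ex}))$ by the coordinate projection. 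Pulling this back through $AP(\dots)A$ and using that multiplication by arbitrary $A$ on both sides is absorbed into $\widetilde P(A)=A p(A) A$, one concludes $p(a_1,\dots,a_n)\in A\,P(H(A,*))\,A$, and hence $\widetilde P(A)=A\,p(A)\,A\subseteq A\,P(H(A,*))\,A$, using that $P(H(A,*))$ is already a Jordan ideal, hence closed under the relevant multiplications once sandwiched by $A$ on both sides.

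The main obstacle I anticipate is the bookkeeping needed to pass cleanly between three models of the same object: $p$ as a formal Jordan polynomial (element of $J\langle X\rangle$, where $P$ lives), $p$ as a special Jordan polynomial (element of $SJ\langle X\rangle\subseteq F\langle X\rangle^{(+)}$, which is where "$P(SJ\langle X\rangle)\ne(0)$" is asserted and where $\widetilde P$ is defined), and $p$ evaluated on symmetric elements of a concrete $(A,*)$ (which must be seen simultaneously as an element of $P(H(A,*))$ and as an associative product). The doubling trick $A\hookrightarrow H(A\oplus A^{\rm op},{\rm ex})$ is what reconciles the last two: it shows that a generic associative evaluation of $p$ is a specialization of a symmetric evaluation, so $\widetilde P(A)$ — a priori only defined via associative substitutions — is forced into $A\,P(H(A,*))\,A$. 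I would also need to check the innocuous point that $\widetilde P$ is nonzero, which follows because $p$ is a nonzero element of the free associative algebra $F\langle X\rangle$ (its nonvanishing on $SJ\langle X\rangle$, a subalgebra of $F\langle X\rangle^{(+)}$, certainly implies $p\ne 0$ in $F\langle X\rangle$), so $(p)^{\rm T}\ne(0)$.
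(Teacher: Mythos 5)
Your approach does not work, and the failure is precisely at the point you flag as the "main obstacle." Choosing $\widetilde P=(p)^{\rm T}$ forces you to control $p(a_1,\dots,a_n)$ for \emph{arbitrary} $a_i\in A$, and your device for doing so---embedding $A$ into $H(A\oplus A^{\rm op},{\rm ex})$---only produces an element of $P\bigl(H(A\oplus A^{\rm op},{\rm ex})\bigr)$, i.e., information about the exchange involution on the doubled algebra, not about the given involution $*$ on $A$. There is no way to pass from $P(H(A\oplus A^{\rm op},{\rm ex}))$ to $A\,P(H(A,*))\,A$, and indeed the claimed containment is false for your $\widetilde P$. Concretely, let $P$ be the Jordan T-ideal generated by the associator $(x_1\circ x_2)\circ x_3-x_1\circ(x_2\circ x_3)$; in $F\langle X\rangle$ this equals the nonzero polynomial $[x_2,[x_1,x_3]]$, so $P(SJ\langle X\rangle)\neq(0)$ and your $p$ may be taken to be $[x_2,[x_1,x_3]]$. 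Take $A=M_2(F)$ with the symplectic involution, so that $H(A,*)=F1$. Since the associator vanishes on any commutative associative algebra, $P(H(A,*))=(0)$ and hence $A\,P(H(A,*))\,A=(0)$; on the other hand $(p)^{\rm T}(A)$ is a nonzero ideal of the simple algebra $M_2(F)$ (evaluate at $x_1=e_{12}$, $x_3=e_{21}$, $x_2=e_{12}$ to get $-2e_{12}\neq 0$), hence equals $M_2(F)$. So $\widetilde P=(p)^{\rm T}$ violates the conclusion. The nonzeroness argument for $(p)^{\rm T}$ is fine; it is the containment that collapses.

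The missing ingredient is Amitsur's theorem on rings with involution \cite{Am1}, which is all the paper's proof uses: the ideal $A\,P(H(A,*))\,A$ is $*$-invariant, and in the quotient $\overline{A}=A/A\,P(H(A,*))\,A$ the symmetric elements satisfy the nonzero associative identity $p$; Amitsur's theorem then yields a polynomial identity for the whole algebra $\overline{A}$ that depends only on $p$ (hence only on $P$) and not on $A$. Taking $\widetilde P$ to be the T-ideal generated by that identity (the paper later fixes $\widetilde P$ as the largest T-ideal with this property) gives $\widetilde P\neq(0)$ and $\widetilde P(A)\subseteq A\,P(H(A,*))\,A$ for every $(A,*)$. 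The point your argument misses is that $\widetilde P$ must consist of \emph{consequences for the full associative structure} of the fact that the symmetric elements satisfy $p$; producing such consequences is exactly the nontrivial content supplied by Amitsur, and it cannot be replaced by re-reading $p$ itself as an associative polynomial.
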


\begin{proof}
    By the result of S.\ A.\ Amitsur \cite{Am1}, the factor algebra $ A/AP(H(A,*))A$
satisfies a polynomial identity that depends only on the ${\rm T}$-ideal $P$. This implies the assertion of the lemma.\end{proof}

 In what follows $\widetilde{P}$ denotes the largest ${\rm T}$-ideal in $F\langle X\rangle$ 
 satisfying the conclusion of Lemma \ref{lema1}.

It is well known
 that if $I$ is an ideal of a  Jordan algebra $J$, then its cube $I^3=(I\circ I)\circ I$ is also an ideal of $J$  (see \cite{cube}).

 Let $P\triangleleft J\langle X\rangle$ be a ${\rm T}$-ideal. Denote by $P'$ the ideal of $J\langle X\rangle$ generated by all elements 
 $(x\circ y)\circ z-(x\circ z)\circ y$, where $x\in P^3$,
 $y,z\in P$.
 If $x,y,z$ are elements of an associative algebra $A$ then
\begin{equation}\label{triple}
    (x\circ y)\circ z - (x\circ z)\circ y = [x,[y,z]].
\end{equation}
 Hence, for the Jordan algebra $J=H(A,*)$ we have
 $$P'(J)\subseteq A[P(J)^3,[P(J),P(J)]]A.$$

 The next lemma will be  repeatedly used in the case where $I=P(J)$, but we will also need it in another situation.
 \begin{lemma}\label{lema2}
   Let $A$ be an  associative algebra (with multiplication denoted by juxtaposition). Let  $J$ be an arbitrary subalgebra of the
   Jordan algebra $A^{(+)}$, let $I$ be an ideal of the Jordan algebra $J$, and let $I^3$ be the cube of the ideal $I$ in the Jordan algebra $J$. Then $JI^3\subseteq II$ and  $I^3J\subseteq II$.
 \end{lemma}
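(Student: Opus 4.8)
The plan is to work entirely inside the associative algebra $A$ and to use the explicit formula for the Jordan cube $I^3 = (I\circ I)\circ I$ together with the expansion of the Jordan product $x\circ y = xy+yx$. Since the two claims $JI^3\subseteq II$ and $I^3J\subseteq II$ are symmetric (one passes to the opposite algebra, under which $J$, $I$ and the Jordan structure are preserved, and $II$ is unchanged as a set since $I$ is an additive subgroup), it suffices to prove $JI^3\subseteq II$.

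First I would unwind the cube. A typical generator of $I^3$ has the form $(a\circ b)\circ c$ with $a,b,c\in I$, which expands associatively to $abc+bac+cab+cba$ (that is, $\sum$ over the ``palindromic'' rearrangements: $abc+bac+acb+\dots$ — more precisely $(ab+ba)c+c(ab+ba) = abc+bac+cab+cba$). Now take $j\in J$ and consider $j\cdot\big((a\circ b)\circ c\big)$. The key structural fact I would exploit is that $I$ is a Jordan ideal of $J$ inside $A^{(+)}$: this means $j\circ x\in I$ for every $j\in J$, $x\in I$, i.e. $jx+xj\in I$. The strategy is to rewrite each of the four terms $jabc$, $jbac$, $jcab$, $jcba$ as a sum of products of two elements of $I$, plus ``correction'' terms that are themselves manifestly in $II$ or can be folded back in; the point is that whenever a factor $j$ sits adjacent to a factor from $I$, I can trade $jx$ for $(j\circ x) - xj = (\text{element of }I) - xj$, which moves the $j$ one slot to the right at the cost of a term already lying in $II$ (since the remaining two $I$-factors, suitably grouped, give an element of $II$, using $char(F)\ne 2$ is not even needed here — just that $I$ is closed under addition).

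Concretely I would push $j$ all the way to the right through the word: $j\,a\,b\,c = (ja+aj)bc - a(jb+bj)c + ab(jc+cj) - abcj$ is the telescoping identity, and here $ja+aj\in I$, $jb+bj\in I$, $jc+cj\in I$, while $abc\in III\subseteq II$ and $abcj$ — hmm, this last one still has a $j$. So instead the cleaner route: observe $JI\subseteq I+IJ$ (from $jx=(j\circ x)-xj$), hence $JI\cdot I\subseteq I\cdot I + IJ\cdot I$; iterating, $J(I^{3})\subseteq$ combinations where the $j$ has been moved past $a,b,c$ in one particular linear combination — and the crucial cancellation is that the terms where $j$ reaches the far right, namely $\pm\,abc\,j$-type terms, \emph{cancel} when we sum the four palindromic rearrangements, precisely because $(a\circ b)\circ c$ is symmetric in a way that makes $\sum(\text{rearrangement})\cdot j$ reproduce $\big((a\circ b)\circ c\big)j$, which we then re-absorb. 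Honestly the slickest formulation: I would prove $J\,I^{3}\subseteq I\,I$ by showing that modulo $II$, for $x\in I^3$, $Jx\equiv -xJ$ is consistent, but since we want containment in a single-sided-looking set $II$, I'd lean on the identity $j\big((a\circ b)\circ c\big) = (j\circ(a\circ b))\circ c - \big((a\circ b)\circ c\big)j + \text{stuff}$ and the Jordan-ideal property to land $j\circ(a\circ b)\in I$ (since $a\circ b\in I^2\subseteq I$), reducing everything to products of two $I$-elements.

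The main obstacle — and the step I'd expect to actually require careful bookkeeping rather than a one-line trick — is organizing the telescoping so that the leftover term with $j$ on the extreme \emph{right} of a product of three $I$-elements either cancels against its partner in the symmetrized cube or is re-expressed: $abc\cdot j$ is not obviously in $II$ on its own, but $\big((a\circ b)\circ c\big)\cdot j$ equals $(a\circ b)\circ c$ times $j$, and since $(a\circ b)\circ c\in I^3\subseteq I$ (the cube of an ideal is contained in the ideal) we get an element of $I\cdot J$, not $I\cdot I$; so the resolution must be that the $j$-on-the-right terms cancel entirely in the alternating sum produced by the telescoping, leaving a sum of products each of which has two genuine $I$-factors. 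Verifying that cancellation for the four-term expansion of $(a\circ b)\circ c$ is the crux, and $I^2\subseteq I$ plus $I^3\subseteq I$ are the facts that make it go through; I would write out the telescoping for the word $jabc$ explicitly, note that the analogous expansions of $jbac$, $jcab$, $jcba$ contribute rightmost-$j$ terms $-bacj$, $-cabj$, $-cbaj$ hmm with the same sign, summing to $-\big((a\circ b)\circ c\big)j$, and then use that $\big((a\circ b)\circ c\big)j + j\big((a\circ b)\circ c\big) = \big((a\circ b)\circ c\big)\circ j \in I\circ J$ — no wait, $j\in J$ and $(a\circ b)\circ c\in I\subseteq J$, and $I$ is an ideal of $J$, so this is in $I$ — but that reintroduces an $I$ with a single factor, not $II$. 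At this point I suspect the honest statement needs one more pass: the rightmost-$j$ terms must be handled by yet another application of $JI\subseteq I+IJ$ to peel them apart, and the process terminates because each application strictly increases the number of $I$-factors to the left of $j$; when $j$ is gone we are in $III\subseteq II$ (using $I^2\subseteq I$ once), and when $j$ survives at the far right we have something in $(I^3)J$ which by the already-proven symmetric half — no, that's circular. So the real resolution: prove both inclusions simultaneously by a single induction on word-structure, or invoke that $I^3$ lies in the \emph{associative} ideal generated by $I$ in a controlled way; I will set it up as a direct computation and expect the bookkeeping — not any deep idea — to be the bulk of the work.
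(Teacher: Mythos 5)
Your attempt does not close, and the two difficulties you run into are real; neither of your proposed resolutions works. First, the telescoping $jabc=(ja+aj)bc-a(jb+bj)c+ab(jc+cj)-abcj$ produces intermediate terms such as $(ja+aj)bc$: here $ja+aj\in I$, but the remaining factors $b,c$ are multiplied on associatively, so the term lies in $I\cdot I\cdot I$, not visibly in $II$. Your assertion that $III\subseteq II$ is unjustified: $II$ need not be contained in $I$ (a Jordan ideal is not an associative subalgebra), so writing $abc=\frac12(a\circ b)c+\frac12[a,b]c$ leaves the summand $[a,b]c$ with no $I$-factorization. Second, as you yourself observe, the rightmost terms $-abcj$ all carry the same sign over the four words of $(a\circ b)\circ c$ and do not cancel, absorbing them via the ideal property only lands you in $I\cdot J$ or in $I$ rather than in $II$, and the appeal to the ``already-proven symmetric half'' is circular. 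The root of the problem is that the single-step substitution $jx=(j\circ x)-xj$ uses only the binary ideal property, which can never manufacture a product of exactly two $I$-elements out of a length-four word.

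The missing idea is to use the Jordan \emph{triple} product instead of the binary one, and to present $I^3$ by $U$-operators. In characteristic $\ne 2$, $I^3=(I\circ I)\circ I$ is spanned by the elements $uvu$ with $u,v\in I$ (linearize $2uvu=(u\circ v)\circ u-\frac12(u\circ u)\circ v$). For $a\in J$ one then has
$$auvu=(auv+vua)u-v(uau),$$
and both parenthesized pieces lie in $I$: indeed $auv+vua=\frac12\bigl((a\circ u)\circ v+(v\circ u)\circ a-(a\circ v)\circ u\bigr)$ and $uau=\frac12\bigl(u\circ(a\circ u)-u^2\circ a\bigr)$, and every $\circ$-expression here stays in $I$ because $I$ is a Jordan ideal of $J$. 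Hence $auvu\in II$ directly, with no leftover $j$-on-the-right term, and $I^3J\subseteq II$ follows from the mirror identity (or by passing to the opposite algebra, as you suggest). This is exactly the paper's two-line proof; the triple-product step is the one idea your bookkeeping could not substitute for.
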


 \begin{proof}
     The space $I^3$ is spanned by elements $uvu$, $u,v\in I$. For an arbitrary element $a\in J$ we have
     $$auvu=(auv + vua)u - v(uau)\in II.$$
This proves the first inclusion. The proof of the second inclusion is similar. \end{proof}

Let $$A_P =\langle P(J)\rangle,$$ i.e., $A_P$ is the subalgebra 
of the associative algebra $A$ generated by $P(J)$.

\begin{lemma}\label{lema3}
    $AP'(J)A\subseteq A_P$.
\end{lemma}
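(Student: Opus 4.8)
The plan is to show $AP'(J)A \subseteq A_P$ by reducing everything to the fact that $P'(J)$ is spanned by elements of the form $[x,[y,z]]$ with $x \in P(J)^3$ and $y,z \in P(J)$ (using \eqref{triple}), and then absorbing the flanking copies of $A$ using Lemma \ref{lema2} applied to the ideal $I = P(J)$ of the Jordan algebra $J = H(A,*)$.

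First I would fix a spanning element of $AP'(J)A$, namely $a[x,[y,z]]b$ with $a,b \in A$, $x \in P(J)^3$, $y,z \in P(J)$. Expand the commutators into a sum of associative words in $x,y,z$ with coefficients $\pm 1$; each such word has the form $w_1 x w_2$ where $w_1,w_2$ are (possibly empty) products of $y$ and $z$, so each word lies in $A\, x\, A$ with the outer $A$'s being products of elements of $P(J) \subseteq J$. Thus it suffices to show $A x A \subseteq A_P$ for $x \in P(J)^3$, where the outer factors may be taken in $J$; more precisely I would show that any element of the form $c_1 \cdots c_k\, x\, d_1 \cdots d_\ell$ with $c_i, d_j \in J$ and $x \in P(J)^3$ lies in $A_P$. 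Here Lemma \ref{lema2} is the workhorse: with $I = P(J)$, it gives $J I^3 \subseteq I\cdot I \subseteq A_P$ and $I^3 J \subseteq I \cdot I \subseteq A_P$ (the products $I\cdot I$ being genuine associative products of two elements of $P(J)$, hence in $A_P$). Peeling off one $c_i$ or $d_j$ at a time and noting that $A_P$ is a subalgebra closed under multiplication by its own elements, a short induction on $k + \ell$ completes the argument; the base case $k = \ell = 0$ is $P(J)^3 \subseteq P(J)\cdot P(J) \subseteq A_P$, again immediate from $I^3$ being spanned by $uvu$, $u,v \in I$.

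The one point requiring care — and the step I expect to be the main (minor) obstacle — is bookkeeping the transition between "Jordan ideal structure" and "associative products". Lemma \ref{lema2} is stated for a single copy of $J$ on one side, so to handle an arbitrary associative word $c_1\cdots c_k \, x \, d_1 \cdots d_\ell$ I must repeatedly use that once one outer layer has been absorbed we land not just in $A$ but in $A_P$, which is itself closed under left and right multiplication by elements of $A_P$ (in particular by elements of $P(J)$). Concretely: $c_k x d_1 \in J I^3 J$; writing $x = \sum u_i v_i u_i$ and using $u_i v_i \in A_P$, $v_i u_i \in A_P$ together with the identity $a(uvu)d = (auv+vua)(ud) - v(ua)(ud)$ from the proof of Lemma \ref{lema2} shows $c_k x d_1 \in A_P$, and then the remaining $c$'s and $d$'s multiply this element of $A_P$ within $A_P$. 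I would present this as a clean induction rather than spelling out all the sign bookkeeping in the commutator expansion, since the latter is routine once the absorption mechanism is in place.
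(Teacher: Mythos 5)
There is a genuine gap, and it sits exactly where the real content of the lemma lies. A spanning element of $AP'(J)A$ has the form $a\,[x,[y,z]]\,b$ with $a,b\in A$ \emph{arbitrary} (and $x\in P(J)^3$, $y,z\in P(J)$). When you expand $[x,[y,z]]$ into words $w_1xw_2$, the factors $w_1,w_2$ are indeed products of $y,z\in P(J)$, so the inner part $w_1xw_2$ lies in $A_P$ for trivial reasons (it is a product of elements of $P(J)$) and Lemma \ref{lema2} is not even needed there. But the element to be controlled is $a(w_1xw_2)b$, and your reduction quietly replaces the outer factors $aw_1$ and $w_2b$ by products of elements of $J$. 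That replacement is not legitimate: $A$ is in general not generated by $J=H(A,*)$ (for instance $A=F[t]$ with $t^*=-t$ has $\langle H(A,*)\rangle=F[t^2]\ne A$), and $A\cdot A_P\cdot A\not\subseteq A_P$. Absorbing these arbitrary outer factors is the entire difficulty, and the commutator shape $[x,[y,z]]$ is essential for it, not mere bookkeeping. The paper decomposes $A=H(A,*)\oplus K(A,*)$, moves symmetric outer factors past $P'(J)$ using that $P'(J)$ is a Jordan ideal (via $aP'(J)b\subseteq (a\circ P'(J))b+P'(J)ab$) and skew ones using that $x\mapsto[a,x]$ is a derivation of $J$ preserving $P(J)$ and $P'(J)$; the remaining one-sided products $p[p_1,p_2]b$ with $b$ skew are then handled through the identity $[p_1,p_2]b=\frac{1}{2}([p_1,p_2]\circ b+[[p_1,p_2],b])\in H(A,*)+[P(J),P(J)]$ combined with Lemma \ref{lema2} in the form $P(J)^3H(A,*)\subseteq A_P$. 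None of these steps has a counterpart in your argument.

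A secondary problem is that even within your reduced setting the induction does not iterate. Lemma \ref{lema2} applies only when the cube $I^3$ sits directly against the factor being absorbed: it yields $JI^3\subseteq II$ and $I^3J\subseteq II$, and after one application you land in $II\subseteq A_P$, which no longer has the $I^3$ shape; since $J\cdot A_P\not\subseteq A_P$ in general, the next $c_i$ cannot be absorbed. Your patched identity $a(uvu)d=(auv+vua)(ud)-v(ua)(ud)$ suffers from the same defect: $auv+vua=\{a,u,v\}\in P(J)$ is fine, but $ud$ and $ua$ with $u\in P(J)$ and $a,d\in J$ need not lie in $A_P$, so the right-hand side is not visibly in $A_P$. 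In effect the proposal establishes only the trivial inclusion $A_P\cdot P(J)^3\cdot A_P\subseteq A_P$ and leaves the actual statement untouched.
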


\begin{proof} 
    Since $A$ is the direct sum
    of $H(A,*)$ and the space of skew-symmetric elements $K(A,*)=\{a\in A\,|\, a^*=-a\}$, it is enough to show that   $aP'(J)b\subseteq A_P$ for any 
 elements $a,b\in H(A,*)\cup K(A,*)$. Moreover, to this end it is enough to 
  show that   $$a[P(J)^3,[P(J),P(J)]]b\subseteq A_P$$
  for any 
  $a,b\in H(A,*)\cup K(A,*)$.
 
    Let us start by showing
   $P'(J)b\subseteq A_P$ for an arbitrary element $b\in H(A,*)\cup K(A,*)$.  If $b\in H(A,*)$ then the assertion follows from Lemma \ref{lema2}.
   Let $b\in K(A,*)$. Choose elements $p\in P(J)^3$, $p_1,p_2\in P(J)$. We have
 \begin{equation}
     \label{gegn}
[p_1,p_2]b =\frac{1}{2}([p_1,p_2]\circ b + [[p_1,p_2],b]) \end{equation}
The mapping $J\to J$,
$x\mapsto [x,b]$, is a derivation. Since $P$ is a T-ideal, it follows that $[P(J),b]\subseteq P(J)$. Therefore, \eqref{gegn} implies 
that
$$[p_1,p_2]b \in H(A,*) + [P(J),P(J)].$$
Similarly, $$b[p_1,p_2]\in H(A,*) + [P(J),P(J)].$$
   Hence, 
   $$p[p_1,p_2]b\in P(J)^3 H(A,*)+A_P =A_P$$
   by Lemma \ref{lema2}.
Similarly, since $[p,b]\in P(J)$ implies $[p_1,p_2][p,b]\in A_P$, it follows that
$$[p_1,p_2]pb = [p_1,p_2][p,b] + [p_1,p_2]bp\in A_P.$$

Now choose arbitrary elements $a,b\in H(A,*)\cup K(A,*)$. If $a\in H(A,*)$ then
$$aP'(J)b\subseteq (a\circ P'(J))b+P'(J)ab\subseteq P'(J)b + P'(J)ab\subseteq A_P$$
   by the above. Let 
$a\in K(A,*)$. Then the mapping
$J\to J$, $x\mapsto [a,x]$ is a derivation of the Jordan algebra $J$. Hence,
$$aP'(J)b\subseteq [a,P'(J)]b + P'(J)ab\subseteq P'(J)b + P'(J)ab\subseteq A_P.
$$
This completes the proof of the lemma.
\end{proof}

Let $U$, $u:J\to U$, be  the special enveloping algebra of the Jordan algebra $J$. This means that
there exists a Jordan homomorphism 
$u:J\to U$ such that the associative algebra $U$ is generated by the image $u(J)$ and for any Jordan homomorphism $\varphi:J\to B$ there exists a homomorphism $\chi: U\to B$ of associative algebras such that the diagram
$$\xymatrixcolsep{5pc}\xymatrixrowsep{3pc}\xymatrix{J  \ar@{->}[r]^u \ar@{>}[dr]^{\varphi} & U \ar@{>}[d]^{\chi}
  \\
 & B
}
$$
is commutative. The algebra $U$ is equipped with an involution $\tau:U\to U$ leaving all elements $u(a)$, $a\in J$, fixed. For more information about universal special algebras, see \cite{Jac,  McC, fourauthors}.

Applying the above definition to the Jordan embedding $H(A,*)\to A$,
$a\mapsto a$, we obtain a homomorphism $t:U\to A$, $t(u(a))=a$, $a\in H(A,*)$.  Consider the ideal $I=\ker t$. Clearly, $I^\tau = I$.
Denote $$I_P=I\cap U_P.$$

In \cite{Z}, the second author constructed the ${\rm T}$-ideal $T$ of the free Jordan algebra $J\langle X\rangle$ with the following properties:
\begin{enumerate}
    \item If $a_1\in T(SJ\langle X\rangle)$
    and $a_2,a_3,a_4 \in SJ\langle X\rangle$, then 
    $$a_1a_2a_3a_4 + a_4a_3a_2a_1\in SJ\langle X\rangle$$
    (we call this the {\bf tetrad-eating property} of $T$).
    \item If $J$ is a finite-dimensional simple Jordan algebra that is not an algebra of a symmetric bilinear form, then $T(J)=J$.
\end{enumerate}

From (1) and the result by P.\ M.\ Cohn 
\cite{Cohn}  it follows that for arbitrary elements 
$a_1,\dots,a_n \in T(SJ\langle X\rangle)$, $n\ge 1$, we have
\begin{equation*}
    \label{p3} a_1 \cdots a_n + a_n\cdots a_1 \in T(SJ\langle X\rangle).
\end{equation*}
This implies that
$$H(U_T,\tau)= u(T(J)).$$
Hence
$$I\cap H(U_T,\tau)=(0).$$
Clearly, $I_T^\tau = I^\tau \cap U_T^\tau =I_T$ and therefore
\begin{equation*}\label{by1}
    H(I_T,\tau)=(0);
\end{equation*}
in other words, every element from $I_T$ is skew-symmetric with respect to $\tau$. By Lemma \ref{lema2},
$JT^3(J)\subseteq  T(J)T(J)$,
hence $u(J)U_{T^3}\subseteq U_T$,
and therefore
\begin{equation}\label{by2}u(J)I_{T^3}\subseteq I_T.\end{equation}

\begin{lemma}\label{lema4}
    $I_{T^3}$ lies in the center of the algebra $U$.
\end{lemma}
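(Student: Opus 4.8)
The plan is to use that $U$ is generated by $u(J)$, so that it suffices to show every $z\in I_{T^3}$ commutes with every generator $u(a)$, $a\in J$. Two facts established above carry the argument: first, every element of $I_T$ is $\tau$-skew (this is the content of $H(I_T,\tau)=(0)$, once one also invokes $I_T^\tau=I_T$ and char$(F)\ne 2$); second, $u(J)I_{T^3}\subseteq I_T$ by \eqref{by2}.

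The first preliminary step is to record that $I_{T^3}\subseteq I_T$. Indeed $T^3=(T\circ T)\circ T\subseteq T$, so $u(T^3(J))\subseteq u(T(J))$, hence $U_{T^3}\subseteq U_T$, and therefore $I_{T^3}=I\cap U_{T^3}\subseteq I\cap U_T=I_T$. In particular every $z\in I_{T^3}$ is $\tau$-skew, i.e.\ $\tau(z)=-z$.

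Now I would fix $z\in I_{T^3}$ and $a\in J$. By \eqref{by2} we have $u(a)z\in u(J)I_{T^3}\subseteq I_T$, so $u(a)z$ is $\tau$-skew: $\tau(u(a)z)=-u(a)z$. On the other hand $\tau$ is an anti-automorphism fixing $u(a)$, so $\tau(u(a)z)=\tau(z)\tau(u(a))=(-z)u(a)=-zu(a)$. Comparing the two expressions gives $u(a)z=zu(a)$. Since $a\in J$ was arbitrary and $U=\langle u(J)\rangle$, this shows $z\in Z(U)$, as claimed.

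There is essentially no obstacle left: the argument is a one-line involution computation. The only point to be careful about is that one needs $z$ \emph{itself}, and not merely $u(a)z$, to be $\tau$-skew — which is precisely why it matters that \eqref{by2} lands inside $I_T$ (rather than only inside $I$) and that $I_{T^3}\subseteq I_T$. All the genuine difficulty is upstream, in deducing $H(U_T,\tau)=u(T(J))$ from the tetrad-eating property of the second author's T-ideal $T$ together with Cohn's theorem and Lemma \ref{lema2}; granting that, Lemma \ref{lema4} is a short formal consequence.
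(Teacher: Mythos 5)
Your argument is correct and is essentially identical to the paper's proof: both deduce $u(a)z=zu(a)$ by computing $\tau(u(a)z)$ in two ways, using that elements of $I_T$ (hence of $I_{T^3}\subseteq I_T$) are $\tau$-skew and that $u(J)I_{T^3}\subseteq I_T$ by \eqref{by2}. Your explicit verification that $I_{T^3}\subseteq I_T$ is a detail the paper leaves implicit, but it is not a different route.
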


\begin{proof}
    Choose elements $x\in u(J)$,
    $y\in I_{T^3}$. We have     $y^\tau=-y$. By
  \eqref{by2}, we also have
    $(xy)^\tau=-xy$. Hence,
    $$(xy)^\tau = y^\tau x^\tau =-yx=-xy,$$
    which implies the assertion of the lemma.
\end{proof}

Recall that $T'$ is the Jordan ${\rm T}$-ideal generated by $(x\circ y)\circ z - (x\circ z)\circ y$, where $x\in T^3$, $y,z\in T$. Clearly, $T'\subseteq T^3$.

\begin{lemma}\label{lema5}
    $I_{T'}U_{T'} = U_{T'}I_{T'} =(0)$.
\end{lemma}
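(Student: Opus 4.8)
The plan is to show that $I_{T'}$ is a central ideal of $U$ that moreover annihilates $U_{T'}$, and then to observe that $I_{T'}\subseteq U_{T'}$ so that the two displayed identities follow at once. First I would record the basic inclusions already available: by definition $T'\subseteq T^3$, so $U_{T'}\subseteq U_{T^3}$ and $I_{T'}\subseteq I_{T^3}$; hence Lemma \ref{lema4} gives $I_{T'}\subseteq Z(U)$. Thus both products $I_{T'}U_{T'}$ and $U_{T'}I_{T'}$ coincide, and it remains to prove that this common product is zero. Since $U$ is generated as an associative algebra by $u(J)$, and $U_{T'}$ is generated by $u(T'(J))$, it suffices to show that $u(a)\,u(b)=0$ for all $a\in T'(J)$ and $b\in J$ with $u(a)\in I$ — more precisely, that $u(T'(J)\cap t\text{-kernel})$ multiplies trivially; but the cleanest route is to exploit the defining relation of $T'$.

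The key step is to use \eqref{triple}: for $x\in T^3(J)$ and $y,z\in T(J)$, the generator $(x\circ y)\circ z-(x\circ z)\circ y$ of $T'(J)$ equals $[x,[y,z]]$ inside $U$ (computing in $U^{(+)}$, identifying $J$ with $u(J)$). Now $x\in T^3(J)\subseteq T(J)$, and by the tetrad-eating consequence $a_1\cdots a_n+a_n\cdots a_1\in T(SJ\langle X\rangle)$ applied via the universal property, products of elements of $u(T(J))$ are symmetric modulo $u(T(J))$; combined with Lemma \ref{lema2} ($JT^3(J)\subseteq T(J)T(J)$ etc.) one gets that $[x,[y,z]]$ already lies in $u(T(J))$, in fact in $I$ when we are in the situation relevant to $I_{T'}$. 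So a typical element of $I_{T'}$ has the form $[x,[y,z]]$ with $x\in T^3(J)$ and, using \eqref{by2}, $u(J)\,x\in$ (symmetric part), while $y,z\in T(J)$. Then I would multiply $[x,[y,z]]$ on either side by an arbitrary $u(b)$, $b\in J$, expand the commutators, and push every factor $u(J)\cdot T^3(J)$ into $U_T\cap I=$ the symmetric-kernel, which is $(0)$ since $H(I_T,\tau)=(0)$ forces every element of $I_T$ to be skew, while Lemma \ref{lema2} produces symmetric elements — the tension between "skew" (from $I_T\subseteq K(U,\tau)$) and "symmetric" (from $T^3$-products) collapses the product to $0$. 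Concretely: $u(b)[x,[y,z]] = u(b)x[y,z]-u(b)[y,z]x$; each term has a segment $u(b)x$ or $[y,z]x$ lying in $u(J)U_{T^3}\subseteq U_T$ which, being in $I$, is in $I_T$ and hence skew, yet Lemma \ref{lema2} shows the same segment lies in $T(J)T(J)\subseteq H$, giving it is both skew and symmetric, hence $0$.

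The main obstacle I anticipate is the bookkeeping needed to verify that after multiplying $[x,[y,z]]$ by $u(b)$ and expanding, \emph{every} resulting monomial can genuinely be routed through a segment of the form $u(J)\cdot U_{T^3}\cap I$ (rather than merely through $U_{T^3}$, which need not be central or in $I$); this requires careful use of the Lie-ideal behaviour of $[T(J),T(J)]$ together with the derivation property $[U_{T^3}, u(J)]\subseteq$ an appropriate $T$-ideal image, exactly in the style of the proof of Lemma \ref{lema3}. Once the reduction "every such monomial contains a factor in $H(I_T,\tau)=(0)$" is established, the conclusion $I_{T'}U_{T'}=U_{T'}I_{T'}=(0)$ is immediate because $U_{T'}$ is generated by $u(T'(J))\subseteq I_{T'}\cdot$(stuff) — more simply, because $U_{T'}$ is generated by elements of $u(T(J))$ and products of such, all of which were just shown to annihilate $I_{T'}$ on both sides, and $I_{T'}$ itself is central so the one-sided statement gives the two-sided one.
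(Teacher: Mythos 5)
Your opening reductions are sound and match the paper: $T'\subseteq T^3$ gives $I_{T'}\subseteq I_{T^3}\subseteq Z(U)$ by Lemma \ref{lema4}, so the two products coincide, and \eqref{triple} is indeed the right tool for passing from $T'$ to commutators against $u(T^3(J))$. But the core of your argument --- the claim that every monomial obtained by expanding $u(b)[x,[y,z]]$ contains a factor that is ``both skew and symmetric, hence $0$'' --- does not hold, and you have flagged exactly the spot where it breaks. Two specific errors: (i) a segment such as $u(b)x$ with $b\in J$, $x\in u(T^3(J))$ is \emph{not} in $I$ (its image under $t$ is the product $ba\ne 0$ in $A$ in general), so it is not in $I_T$ and there is no reason for it to be skew; (ii) the inclusion ``$T(J)T(J)\subseteq H$'' is false, since $(u(p)u(q))^\tau=u(q)u(p)$ need not equal $u(p)u(q)$. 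Moreover, the conclusion your computation would yield, namely $u(J)\,u(T'(J))=(0)$, is visibly too strong: applying $t$ gives $A\cdot T'(H(A,*))=(0)$, which fails already for matrix algebras. There is also a structural confusion: a ``typical element of $I_{T'}$'' is \emph{not} of the form $[x,[y,z]]$; by definition $I_{T'}=I\cap U_{T'}$ consists of those elements of the subalgebra generated by $u(T'(J))$ that lie in $\ker t$, whereas $[x,[y,z]]$ is a generator of the Jordan ideal $T'(J)$, typically not in $I$ at all. You have the roles of the two factors in $I_{T'}U_{T'}$ interchanged.

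The missing step, which is the actual content of the paper's proof, is the identity $I_{T^3}[u(T^3(J)),U]=(0)$, and it follows from centrality rather than from a skew/symmetric collision. Take $y\in I_{T^3}$ and $a\in T^3(J)$. Then $u(a)y$ lies in $U_{T^3}$ (product of two elements of that subalgebra) and in the ideal $I$, hence $u(a)y\in I_{T^3}\subseteq Z(U)$ by Lemma \ref{lema4} again. Since $y$ is also central, for every $w\in U$ one gets $y[u(a),w]=[u(a)y,w]=0$. (The skew/symmetric tension you invoke is what proves Lemma \ref{lema4} itself, i.e.\ centrality; it does not directly produce vanishing.) With this in hand, \eqref{triple} shows that $u(T'(J))$ is contained in the associative ideal generated by $[u(T^3(J)),U]$, so $I_{T'}\,u(T'(J))\subseteq U\,I_{T^3}[u(T^3(J)),U]\,U=(0)$ by centrality of $I_{T'}$, and since $U_{T'}$ is generated by $u(T'(J))$ and $I_{T'}$ is central, both displayed products vanish. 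Your proposal correctly assembles the outer scaffolding but leaves this pivotal lemma unproved and replaces it with an argument that is incorrect as stated.
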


\begin{proof}
From Lemma \ref{lema4}    it follows that $$I_{T^3}[u(T^3(I)),U]=(0).$$ 
In terms of the associative multiplication in the algebra 
$F\langle X\rangle$, \eqref{triple} holds.
Hence,
$$T'(SJ\langle X\rangle)\subseteq F\langle X\rangle[T^3(SJ\langle X\rangle),F\langle X\rangle]F\langle X\rangle.$$
This implies the assertion of the lemma.
\end{proof}

Now we are ready to introduce the ${\rm T}$-ideal $G$ and to prove the main theorem.

Let $T''=(T')'\triangleleft J\langle X\rangle$ and
let $$G=\widetilde{T''}$$ be the ${\rm T}$-ideal of $F\langle X\rangle$.

\bigskip
\noindent{\em Proof of Theorem \ref{te}.} Our goal is to show that the restriction of $\varphi$ to
$H(G(A),*)$ is annihilator-by-standard. Let $B'$ be the subalgebra of $B$ generated by $\varphi(H(G(A),*))$. We need to show that the Jordan homomorphism
$$H(G(A),*)\to B'/{\rm Ann}(B')$$
is standard.

Let $B_T'$ be the subalgebra of $B$ generated by $\varphi(T'(J))$
or, equivalently, $B_{T'}= \chi(U_{T'})$. By Lemma \ref{lema5},
$\chi(I_{T'})\subseteq {\rm Ann}(B_{T'})$.

As above, we denote the subalgebra of $A$ generated by $T'(H(A,*))$ by
$A_{T'}$. 
Let us show that 
 the mapping
$$T'(H(A,*))\to  B_{T'}/{\rm Ann}(B_{T'}),$$
which is the composition of the mapping $u$ and $U_{T'}\to B_{T'}/{\rm Ann}(B_{T'})$, extends to a homomorphism of associative algebras 
$$\psi:A_{T'}\to B_{T'}/{\rm Ann}(B_{T'}).$$
By the universality of the mapping $u$ there is a homomorphism $\mu_1:U_{T'}\to B_{T'}$ and there is the mapping $\mu_2:U_{T'}\to A_{T'}$. The kernel
$\ker \mu_2=I_{T'}$ is mapped by $\mu_1$ to the annihilator Ann$(B_{T'})$. Hence there is a homomorphism $A_{T'}\to B_{T'}/{\rm Ann}(B_{T'})$ that extends the Jordan homomorphism $T'(H(A,*))\to  B_{T'}/{\rm Ann}(B_{T'})$.

By Lemmas \ref{lema1} and \ref{lema3},
$$G(A)\subseteq AT''(J)A\subseteq A_{T'}.$$ 
By the 
tetrad-eating property of the ideal $T$,
$$H(G(A),*)\subseteq H(A_{T'},*)\subseteq T'(H(A,*)).$$
Let $\psi_G$ be the restriction of the homomorphism $\psi$ to $G(A)$. Let $B_G$ be the subalgebra of $B$ generated by the image of $H(G(A),*)$. We have
$$B_G\cap {\rm Ann}(B_{T'})\subseteq {\rm Ann}(B_G),$$
$$\psi_G:G(A)\to B_G/B_G\cap {\rm Ann}(B_{T'}).$$
The composition 
$$G(A)\to B_G/B_G\cap {\rm Ann}(B_{T'})\to B_G/{\rm Ann}(B_G)$$
extends the Jordan homomorphism $H(G(A),*)\to B_G/{\rm Ann}(B_G)$.
We proved that the Jordan homomorphism
$$\varphi:H(G(A),*)\to B_{T'}/{\rm Ann}(B_{T'})$$
is standard. This completes the proof of the first part of the theorem.

Recall that the complexity of a ${\rm T}$-ideal $G\triangleleft F\langle X\rangle$ is a maximal $n\ge 1$ such that the matrix algebra $M_n(F)$ satisfies all identities from $G$. Our aim is to prove that the complexity of the ${\rm T}$-ideal $G$  is equal to $2$.

For that we need to recall S.\ A. Amitsur's proof in \cite{Am1}. Recall  that the field $F$
is assumed to be algebraically closed. Let $P$ be a Jordan ${\rm T}$-ideal such that
$P(SJ\langle X\rangle) \ne (0)$.

Let $(A_i,*_i)$, $1\le i\le m$, be the list of all finite dimensional simple involutive associative algebras such that $P(H(A_i,*_i)) = (0)$, $1\le i\le m$. If $f=0$ is an identity in all algebras $A_i$, $1\le i\le m$, then some power $f^k$ lies in $\widetilde{P}$. 

If $P=T''$ then the list is short:
$A_1=F$, $A_2 =F\oplus F$ with the exchange involution, $A_3= M_2(F)$ with the orthogonal or symplectic involution, $A_4=M_2(F)\oplus M_2(F)^{\rm op}$
with the exchange involution.
Indeed, unless the Jordan algebra $H(A_i,*_i)$
is associative as in the cases of $A_1$ and $A_2$, $T''(H(A_i,*_i))=(0)$
if and only if $T(H(A_i,*_i))=(0)$.
In \cite{Z} it was shown that the latter equality holds only in the cases $A_3$ and $A_4$.

If $f\in T(M_2)$ then some power 
$f^k$ lies in the ${\rm T}$-ideal $G$. This completes the proof of the theorem.
 $\hfill\qed$

\bigskip

We will now construct an example of an associative algebra $A$, an associative algebra $B$ with an involution $*:B\to B$ and a Jordan isomorphism 
$\varphi:A\to H(B,*)$ such that for an arbitrary T-ideal ${\rm T}$, the restriction of $\varphi$ to $T(A)$ is not standard (Proposition \ref{pe2}).

We start with some well known facts concerning presentations of semigroups by generators and relations.

Let $X=\{x_1,x_2,\dots\}$ be a finite alphabet. Let $X^*$ be the set of all nonempty words in $X$. The semigroup $X^*$ is referred to as the free semigroup.

An equivalence relation $\sim$ on $X^*\times X^*$
is called a {\bf congruence} if $u_1\sim v_1$, $u_2\sim v_2$ implies $u_1u_2\sim v_1v_2$. This makes the set of equivalence classes a semigroup
$X^*/\sim$.

For a subset $\sigma\subset X^*\times X^*$, let $\langle \sigma\rangle$ be the minimal congruence containing $\sigma$. Two words $u,v$ are equivalent if there exists a sequence of words $u=u_1,u_2,\dots,u_n=v$ such that 
each pair $u_i,u_{i+1}$, $i=1,\dots,n-1,$
is of the form $u_i=u'u_i'v'$, $u_{i+1}=u'u_{i+1}'v'$, $u_i'\times u_{i+1}'$ or $u_{i+1}'\times u_i'$ lies in $\sigma$. In this case we we say that $u=v$ follows from $\sigma$.
The semigroup $S=X^*/\langle \sigma\rangle$ is {\bf presented} by the set of generators $X$ and the set of relations $\sigma$, $S=\langle X\,|\,\sigma\rangle$.

The semigroup is called {\bf finitely presented} if it has a presentation $S=\langle X\,|\,\sigma\rangle$ with both $X$
and $\sigma$ finite.

A semigroup $S$ is {\bf graded}  if no two words (in $X$) of different lengths are equal. Similarly we can talk about {\bf graded congruences}.

Given a graded congruence $\sim\,\subset X^*\times X^*$, we will define a new graded congruence
$\sim_{in}\,\subset X^*\times X^*$. Let $u,v\in X^*$, ${\rm length}(u)={\rm length}(v)=n$. We say that $u\sim_{in}v$ if $u=v$ follows from $\{u'\times v'\,|\, {\rm length}(u')={\rm length}(v') < n, u'\sim v'\}$. Clearly,
$$\sim_{in}\,\subset\, \sim.$$

Let $S$ be a semigroup. A mapping $f:S\times S\to F$ is called a {\bf 2-cocycle} if $f(ab,c)=f(a,bc)$ for any elements $a,b,c\in S$. 

Let $S=X^*/\sim$, where $\sim$ is a graded congruence. Let $h:S\times S\to F$ be a 
$2$-cocycle. Let us lift $h$ to $\tilde{h}:X^*\times X^*\to F$.

\begin{lemma}
    \label{lef2}  Let $a,b,c,d\in X^*$. If $ab\sim_{in} cd$ then $\tilde{h}(a,b)=\tilde{h}(c,d)$.

\end{lemma}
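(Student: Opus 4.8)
\textbf{Proof sketch for Lemma \ref{lef2}.}

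The plan is to reduce the claim to a statement about the lifted cocycle $\tilde h$ along single-step rewrites and then iterate. First I would make precise how $\tilde h$ is defined: for a word $w = y_1\cdots y_k \in X^*$ and a factorization $w = pq$ with $p = y_1\cdots y_j$, $q = y_{j+1}\cdots y_k$, we set $\tilde h(p,q) = h(\bar p, \bar q)$, where $\bar{\,\cdot\,}$ denotes the image in $S = X^*/\!\sim$. The key preliminary observation is that because $h$ is a $2$-cocycle on $S$, the value $h(\bar p,\bar q)$ depends, for a \emph{fixed} product word $w$, only on $w$ and on where we cut it; more importantly, I want the following: if $w$ and $w'$ are two words of the same length $n$ that differ by a single application of a defining relation $u' \times v'$ (or $v'\times u'$) with ${\rm length}(u') = {\rm length}(v') < n$ inside them, say $w = s\,u'\,t$ and $w' = s\,v'\,t$, then for the two ``outermost'' factorizations $w = (su')\cdot t$ read against $w = s\cdot(u't)$ the cocycle identity lets me slide the cut across the rewritten block. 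Concretely, $h$ being a $2$-cocycle gives $h(\overline{su'},\bar t) = h(\bar s, \overline{u't})$ and likewise $h(\overline{sv'},\bar t) = h(\bar s, \overline{v't})$, and since $\overline{u't}$ and $\overline{v't}$ (resp.\ $\overline{su'}$ and $\overline{sv'}$) may differ in $S$ only through the relation $u'\sim v'$, repeated use of the cocycle condition collapses the comparison to $h(\bar s\,\bar u',\bar t)$ versus $h(\bar s\,\bar v',\bar t)$, which are equal because $\bar u' = \bar v'$ in $S$. Thus $\tilde h(a,b)$ is unchanged under a single elementary $\sim_{in}$-step applied anywhere in the word $ab$, \emph{provided the step does not straddle the cut between $a$ and $b$}.

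Next I would handle the case where the rewrite does straddle the distinguished cut between $a$ and $b$. Suppose $ab = a'\,r\,b'$ where the rewritten block $r$ overlaps position $|a|$, say $a = a'r_1$, $b = r_2 b'$, $r = r_1 r_2$, and we replace $r$ by some $\tilde r = \tilde r_1\tilde r_2$ with $\tilde r$ of the same length, giving the new word $cd = a'\tilde r_1 \cdot \tilde r_2 b' = c d$. Here I use the cocycle identity to move the evaluation cut away from $r$: write $\tilde h(a,b) = h(\overline{a'r_1},\overline{r_2b'})$ and, using $2$-cocycleness repeatedly, $= h(\bar a', \overline{r_1 r_2 b'}) = h(\bar a', \overline{r\,b'})$; similarly $\tilde h(c,d) = h(\bar a', \overline{\tilde r\, b'})$. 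Since $r \sim \tilde r$ is (an instance of) a defining relation and $\sim$ is a congruence, $\overline{r\,b'} = \overline{\tilde r\, b'}$ in $S$, hence the two values coincide. The point of the definition of $\sim_{in}$ is precisely that in the sequence of elementary steps witnessing $ab \sim_{in} cd$, every rewrite uses a relation between words strictly shorter than $n = {\rm length}(ab)$, so each intermediate word still has length $n$ and each individual step is of one of the two types just analyzed.

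Finally I would string these together: by definition of $\sim_{in}$ there is a chain $ab = w_1, w_2, \dots, w_m = cd$ of words, all of length $n$, with consecutive $w_i, w_{i+1}$ related by a single application of some relation $u'\times v'$ with ${\rm length}(u') = {\rm length}(v') < n$. For each $i$, fixing once and for all the cut at position $|a| = |c|$, the preceding two paragraphs show $\tilde h(\text{left of }w_i, \text{right of }w_i) = \tilde h(\text{left of }w_{i+1}, \text{right of }w_{i+1})$ — the first paragraph handles the steps disjoint from the cut, the second handles the steps overlapping it. (Steps entirely to one side of the cut or touching it are covered by taking $s$ or $t$ possibly empty in the analysis above; one should be slightly careful that ``empty word'' does not arise as a genuine factor, but since $a,b,c,d$ are nonempty and every intermediate $w_i$ has positive length $n$, the only place emptiness could intrude is as a trivial end of a factorization, which the $2$-cocycle identity handles degenerately.) Induction on $m$ then yields $\tilde h(a,b) = \tilde h(c,d)$, which is the assertion.

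The main obstacle I anticipate is the bookkeeping in the straddling case: one must verify that sliding the evaluation cut off the rewritten block, via iterated applications of $f(ab,c) = f(a,bc)$, is legitimate even when the block $r$ is long, and that after the slide the only remaining discrepancy is genuinely killed by the congruence relation $r \sim \tilde r$ in $S$. This is routine but needs care to present cleanly; everything else is a straightforward induction once the one-step invariance of $\tilde h$ is established.
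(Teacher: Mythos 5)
Your overall strategy is sound and close in spirit to the paper's: both reduce to a single elementary rewrite $w'uw''\to w'vw''$ with $u\sim v$ of length $<n$, and then combine the cocycle identity with the fact that $\sim$ is a congruence. The execution differs. The paper avoids your case analysis entirely: since $w'$ and $w''$ cannot both be empty, it strips one boundary letter, say $a=x_{i_1}a'$, and computes $\tilde{h}(a,b)=h(\overline{x_{i_1}},\overline{a'b})$; the same computation for $(c,d)$ gives $h(\overline{x_{i_1}},\overline{c'd})$, and $\overline{a'b}=\overline{c'd}$ in $S$ because $u\sim v$. This single uniform step subsumes both your ``straddling'' and ``non-straddling'' cases (the latter, incidentally, needs no cocycle identity at all: if the rewrite lies entirely inside $a$, then $\overline{a}=\overline{c}$ and $b=d$, so $h(\overline{a},\overline{b})=h(\overline{c},\overline{d})$ outright).

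Two points in your write-up need repair. First, the lemma does not assume $|a|=|c|$, yet your chain argument fixes the cut ``at position $|a|=|c|$''; when the two cuts differ, your induction terminates at $\tilde{h}(c_1,d_1)$ for some other factorization $c_1d_1=cd$, and you must still invoke the fact that $\tilde{h}(p,q)$ depends only on the product word $pq$ --- a one-line consequence of $h(\overline{p}\,\overline{m},\overline{q'})=h(\overline{p},\overline{m}\,\overline{q'})$ which you allude to but never isolate. The paper's reduction never references the cut positions of $a$ and $c$, so the issue does not arise there. Second, since $X^*$ consists of nonempty words, $S$ has no identity element, so in your straddling case the expression $h(\overline{a'},\cdot)$ is undefined when $a'$ is empty; ``handled degenerately'' is not an argument --- one must instead slide the cut to the other side, which is possible precisely because $w'$ and $w''$ are not simultaneously empty (the relation used has length $<n$). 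Both points are easily fixed, so the proof is correct in substance, just less economical than the paper's.
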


\begin{proof}
     We only need to consider the case where $ab=w'uw''$, $cd=w'vw''$, $u\sim v$, the words $w',w''$ are not simultaneously empty. Suppose that the word $w'$ is not empty, $w'=x_{i_1}w_i'$. Then the word $a$ starts with $x_{i_1}$, $a=x_{i_1}a'$. Hence,
    $\tilde{h}(a,b)=\tilde{h}(x_{i_1}, a'b)$, $a'b=w_1'uw''$.   Similarly,
    $\tilde{h}(c,d)=\tilde{h}(x_{i_1},w_1'vw'')$, the elements $w_1'uw''$ and $w_1'vw''$ are equal in $S$. This proves the lemma.\end{proof}

\begin{lemma}
   Let $\alpha:X^*\to F$ be a mapping such that $\alpha(u)=\alpha(v)$ if
        $u\sim_{in}v$. Define the $2$-cocycle $h:(X^*/\sim)\times (X^*/\sim) \to F$, $h(u,v)=\alpha(uv)$. The mapping $h$ is well defined.
\end{lemma}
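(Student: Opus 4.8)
The plan is to verify that the proposed map $h\colon (X^*/\sim)\times(X^*/\sim)\to F$ given by $h(u,v)=\alpha(uv)$ is independent of the choice of representatives $u,v$ of their $\sim$-classes. So suppose $u\sim u'$ and $v\sim v'$; I must show $\alpha(uv)=\alpha(u'v')$. By symmetry and transitivity it suffices to treat the two cases separately, i.e.\ to show $\alpha(uv)=\alpha(u'v)$ whenever $u\sim u'$ (and then apply the same argument on the right). Since $\sim$ is a congruence, $uv\sim u'v$; but that is not quite enough, because $\alpha$ is only assumed constant on $\sim_{in}$-classes, not on $\sim$-classes. The key point is that $uv$ and $u'v$ have the \emph{same length} (the congruence is graded, so $u\sim u'$ forces $\mathrm{length}(u)=\mathrm{length}(u')$), and the equality $uv=u'v$ in $S$ is witnessed by rewriting steps that only ever replace subwords by $\sim$-equivalent subwords, all of which have length strictly less than $\mathrm{length}(uv)$ unless they are the whole word.

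The heart of the argument is therefore the following reduction: I claim $uv\sim_{in}u'v$. Indeed, $u\sim u'$ means there is a chain $u=w_1,w_2,\dots,w_k=u'$ in which consecutive words differ by replacing one subword by a $\sim$-equivalent subword; appending $v$ on the right gives a chain $uv=w_1v,\dots,w_kv=u'v$ with the same property, so $uv=u'v$ follows from $\sim$. Each rewriting step in this chain replaces a subword of $u$ (hence a \emph{proper} subword of $uv$, since $v$ is nonempty, or at worst all of $u$ which has length $<\mathrm{length}(uv)$) by a $\sim$-equivalent word of the same length. Hence every step uses a relation $u'\times v'$ with $\mathrm{length}(u')=\mathrm{length}(v')<\mathrm{length}(uv)$, which is exactly what is allowed in the definition of $\sim_{in}$. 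Therefore $uv\sim_{in}u'v$, and the hypothesis on $\alpha$ gives $\alpha(uv)=\alpha(u'v)$.

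Running the symmetric argument on the second coordinate gives $\alpha(u'v)=\alpha(u'v')$ (here the replaced subwords are proper suffixes of $u'v'$ since $u'$ is nonempty), so $\alpha(uv)=\alpha(u'v')$, and $h$ is well defined. Finally I would record that $h$ is genuinely a $2$-cocycle: for $a,b,c\in S$, choosing word representatives we get $h(ab,c)=\alpha((ab)c)=\alpha(a(bc))=h(a,bc)$ since the underlying words $(ab)c$ and $a(bc)$ in $X^*$ are literally equal (concatenation is associative on $X^*$), so no appeal to $\sim_{in}$ is even needed there.

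The only genuine subtlety—and the step I expect to need the most care—is making precise that the rewriting chain witnessing $uv=u'v$ can be taken to use only relations among words of length $<\mathrm{length}(uv)$, i.e.\ that one never needs to invoke a relation identifying the \emph{entire} word $uv$ with something. This is where nonemptiness of $v$ (and of $u'$ in the second step) is used: it guarantees the modified subword sits strictly inside the bigger word. Once that observation is in place the rest is a routine unwinding of the definitions of $\sim$, $\sim_{in}$, and the hypothesis on $\alpha$.
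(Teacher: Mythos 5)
Your proof is correct and follows essentially the same route as the paper's (very terse) argument: the equality $uv=u'v'$ in $S$ is witnessed by replacing the subwords $u$ and $v$, each of length strictly less than $\mathrm{length}(uv)$ because the other factor is nonempty, so $uv\sim_{in}u'v'$ and the hypothesis on $\alpha$ applies. The detour through a chain of rewriting steps for $u\sim u'$ is unnecessary — the single pair $(u,u')$ is already an admissible relation in the definition of $\sim_{in}$ at level $\mathrm{length}(uv)$ — but this does not affect correctness.
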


\begin{proof}
We need to show that $u\sim u_1$, $v\sim v_1$ implies $\alpha(uv)=\alpha(u_1v_1)$, but $uv\sim_{in} u_1v_1$. This proves the lemma.
\end{proof}

Now suppose that the field $F$ is infinite, the mapping $\alpha:X^*/\sim_{in}\to F$ is injective. In other words, given words $u,v\in X^*$, we have $\alpha(u)=\alpha(v)$ if and only if
$u\sim_{in} v$.

Suppose that the graded semigroup 
$S=X^*/\sim$ is not finitely presented (there are many such semigroups).

\begin{lemma}
    \label{leff1} For an arbitrary $n\ge 1$ there exist words $u,v\in X^*$ such that 
    $u\sim v$, $u\not\sim_{in} v$,
    $
    {\rm length}(u)={\rm length}(v) \ge n$.
\end{lemma}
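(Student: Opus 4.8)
The plan is to argue by contradiction: suppose that for some $n\ge 1$ every pair of words $u,v$ with $u\sim v$ and ${\rm length}(u)={\rm length}(v)\ge n$ already satisfies $u\sim_{in}v$. I will show this forces $S=X^*/\sim$ to be finitely presented, contrary to our standing assumption on $S$.

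First I would set up the candidate finite presentation. Since $X$ is a finite alphabet, there are only finitely many words of length $<n$, hence only finitely many pairs $(u',v')$ with $u'\sim v'$ and ${\rm length}(u')={\rm length}(v')<n$; let $\sigma_0$ be this finite set of relations. I claim $S=\langle X\mid \sigma_0\rangle$. One inclusion is immediate: every relation in $\sigma_0$ holds in $S$, so there is a natural surjection $X^*/\langle\sigma_0\rangle \twoheadrightarrow S$. For the reverse, I need: whenever $u\sim v$ in $S$, the equality $u=v$ follows from $\sigma_0$. By the grading of $\sim$, $u$ and $v$ have the same length $k$. If $k<n$ then $(u,v)\in\sigma_0$ (as $u\sim v$) and we are done; if $k\ge n$, then by the contradiction hypothesis $u\sim_{in}v$, which by the very definition of $\sim_{in}$ means $u=v$ follows from $\{u'\times v'\mid {\rm length}(u')={\rm length}(v')<k,\ u'\sim v'\}$. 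That generating set is not yet $\sigma_0$ — it contains pairs of every length $<k$, not just length $<n$ — so I need to iterate: each intermediate rewriting step replaces a subword $u'$ by $v'$ with $u'\sim v'$ and ${\rm length}(u')<k$, and I apply the same dichotomy to the pair $(u',v')$. A downward induction on the length of the pair being justified then collapses everything to relations of length $<n$, i.e.\ to $\sigma_0$. This shows $X^*/\langle\sigma_0\rangle\cong S$, so $S$ is finitely presented — a contradiction.

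The step I expect to be the main obstacle is making the iteration in the previous paragraph rigorous, because the recursion in the definition of $\sim_{in}$ is itself defined in terms of $\sim_{in}$ at strictly smaller lengths, so one has to be careful that the descent actually bottoms out cleanly and that ``$u=v$ follows from $\sigma_0$'' is obtained as a genuine finite chain of elementary rewrites. The clean way is strong induction on $k={\rm length}(u)$: the statement to prove is ``for all $u,v$ with $u\sim v$ and ${\rm length}(u)=k$, the equality $u=v$ follows from $\sigma_0$'', and in the inductive step one uses $u\sim_{in}v$ (valid for $k\ge n$ by hypothesis, and trivially for $k<n$ since then $(u,v)\in\sigma_0$) to write the chain from $u$ to $v$ using subword replacements $u'\rightsquigarrow v'$ with ${\rm length}(u')<k$, to each of which the induction hypothesis applies to expand it into a chain over $\sigma_0$; concatenating and inserting the common prefixes/suffixes (legitimate since $\langle\sigma_0\rangle$ is a congruence) yields the desired chain. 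Once this bookkeeping is done the contradiction is immediate, so the lemma follows.
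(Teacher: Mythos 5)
Your proof is correct and follows essentially the same route as the paper: the paper simply observes that if $S$ is not finitely presented then for each $n$ some relation of length $\ge n$ fails to follow from the (finitely many) relations of length $<n$, which is exactly the contrapositive you establish. Your strong induction on the length $k$ merely supplies the bookkeeping---reducing ``follows from relations of length $<k$'' to ``follows from relations of length $<n$''---that the paper's one-sentence argument leaves implicit, and it is carried out correctly.
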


\begin{proof}
    It is sufficient to notice that since $S$ is not finitely presented there exist words $u,v$ such that $u=v$ in $S$,  $
    {\rm length}(u)={\rm length}(v) \ge n$, and $u=v$ does not follow from all equalities $u'=v'$ in $S$,  $
    {\rm length}(u')={\rm length}(v') < n$. This completes the proof of the lemma.
\end{proof}

Let $A$ be an associative algebra. Let $f:A\times A\to F$
be a bilinear form. 
Consider the algebra $\hat A= A \oplus Fz$ with multiplication $\hat A z =z \hat A = (0)$, 
$$a\cdot b = ab + f(a,b)z,$$ where $a,b\in A$, $ab$ is the product in the algebra $A$, and $a\cdot b$ is the product in the algebra $\hat A$. The algebra $\hat A$ is associative if and only if 
$f(ab,c)=f(a,bc)$ for arbitrary elements $a,b,c\in A$, i.e., $f$ is a (Hochschild) {\bf $2$-cocycle}. A $2$-cocycle is called a {\bf coboundary}  if there exists a linear functional $\lambda:A\to F$ such that $f(a,b)=\lambda(ab)$.

\begin{proposition}
      \label{pe1} There exists an algebra $A$
      and a $2$-cocycle $f:A\times A\to F$ with the following properties: for an arbitrary {\rm T}-ideal $P$, the ideal $P(A)$ contains elements $a_1,a_2,b_1,b_2$ such that $a_1b_1-a_2b_2=b_1a_1-b_2a_2=0$, $f(b_1,a_1)=f(b_2,a_2)=0$, but $f(a_1,b_1)\ne f(a_2,b_2)$. In particular, the restriction
      $f:P(A)\times P(A)\to F$ is not a coboundary.
\end{proposition}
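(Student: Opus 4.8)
The plan is to build $A$ from a non-finitely-presented graded semigroup $S = X^*/\!\sim$, encoding the failure of finite presentability as a failure of the $2$-cocycle $f$ to be a coboundary on every nonzero T-ideal. Concretely, I would take $A$ to be the semigroup algebra $F[S]$ (or rather the contracted semigroup algebra spanned by the classes of $X^*$), where $X$ is finite and $S$ is graded and not finitely presented, as guaranteed to exist by the remarks preceding Lemma \ref{leff1}. The grading on $S$ makes $A$ a graded algebra $A = \bigoplus_{n\ge 1} A_n$. Using Lemma \ref{leff1}, for each $n$ there are words $u,v$ of equal length $\ge n$ with $u\sim v$ but $u\not\sim_{in} v$; the difference $u-v$ (as an element of $A$) lies in $A_n$, and since $A$ is graded and $P(A)$ is a graded ideal that is nonzero in all sufficiently high degrees (here I must check that $P(A)$ contains homogeneous elements of arbitrarily large degree — this follows because $A$ is spanned by products of generators and $P$ is a nonzero T-ideal, so $P(A)\supseteq A_m A P(A) A A_m$ reaches high degrees), one can multiply $u - v$ into $P(A)$ while still controlling the cocycle values.

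The cocycle $f$ is obtained by first fixing an injective map $\alpha: X^*/\!\sim_{in}\, \to F$ (possible since $F$ is infinite and $X^*/\!\sim_{in}$ is countable), then setting $h(u,v) = \alpha(uv)$ on $S\times S$, which is a well-defined $2$-cocycle by the lemma following Lemma \ref{lef2}, and finally lifting to $\tilde h$ on $X^*\times X^*$ and transporting to a bilinear form $f$ on $A$ via $f(\bar a, \bar b) = \tilde h(a,b)$ for words $a,b$ (extended bilinearly). The key point is Lemma \ref{lef2}: $\tilde h(a,b)$ depends only on the $\sim_{in}$-class of $ab$, so $f$ is well defined on $A$ since $A$'s multiplication identifies $\sim$-equivalent (hence in particular we need the products $ab$ landing in the right classes) words; one checks $f$ descends to $A$ because whenever $\bar a_1 \bar b_1 = \bar a_2 \bar b_2$ in $A$ we have $a_1b_1 \sim a_2 b_2$, but to get $f(\bar a_1,\bar b_1)=f(\bar a_2,\bar b_2)$ we would need $a_1b_1 \sim_{in} a_2b_2$, which is exactly what fails — so actually $f$ must be defined more carefully as $f(\bar a,\bar b) := \alpha(a\cdot b \text{ in } X^*/\!\sim_{in})$, i.e. via a chosen section, and the whole construction is arranged so that $f$ is literally a function of the product word modulo $\sim$ only up to the ambiguity measured by $\sim_{in}$. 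That $f$ is a $2$-cocycle on $A$ reduces to $f(ab,c) = f(a,bc)$, i.e. $\alpha$ applied to $abc$ read two ways, which holds since both equal $\alpha(\overline{abc})$.

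Now fix a T-ideal $P$ and $n$; pick $u\sim v$, $u\not\sim_{in} v$, $\ell := \mathrm{length}(u)=\mathrm{length}(v)\ge n$ large, and choose a word $w$ of large length with $\bar w \in P(A)$ (such $w$ exists: $P(A)$ is a nonzero graded ideal, so contains a homogeneous element, and multiplying by monomials we may assume it is a single word of length as large as we like, in fact we can arrange the relevant products to sit inside $P(A)$). Set $a_1 = \overline{u w}$, $a_2 = \overline{v w}$, $b_1 = b_2 = \bar w'$ for a suitable word $w'$, arranged so that $\overline{u w w'} = \overline{v w w'}$ in $A$ (using $u\sim v$) hence $a_1 b_1 = a_2 b_2$, and similarly on the other side $b_i a_i$ with $\overline{w' u w} = \overline{w' v w}$; meanwhile $a_1, a_2, b_1, b_2 \in P(A)$ since each contains the factor $\bar w \in P(A)$ and $P(A)$ is an ideal. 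The values: $f(a_1,b_1) = \alpha(\overline{uww'})$ and $f(a_2,b_2) = \alpha(\overline{vww'})$, and by choosing $w, w'$ so that $uww' \not\sim_{in} vww'$ (which follows from $u\not\sim_{in}v$ together with the grading — appending words of fixed length cannot create an $\sim_{in}$-derivation when none existed, since $\sim_{in}$ is closed under the semigroup operations in the appropriate graded sense) and using injectivity of $\alpha$, we get $f(a_1,b_1)\ne f(a_2,b_2)$; one also arranges $f(b_i,a_i)=0$ by a harmless normalization of $\alpha$ (e.g. shifting so that the relevant class maps to $0$, or choosing the section so these particular products have $\alpha$-value $0$). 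The final clause — that $f|_{P(A)}$ is not a coboundary — is immediate: if $f(a,b)=\lambda(ab)$ then $f(a_1,b_1)=\lambda(a_1b_1)=\lambda(a_2b_2)=f(a_2,b_2)$, contradiction.

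The main obstacle I expect is the bookkeeping in the third paragraph: ensuring simultaneously that (i) the chosen elements land in $P(A)$, (ii) the two products agree in $A$ on both sides (so $a_1b_1 = a_2b_2$ and $b_1a_1 = b_2a_2$), (iii) the relevant concatenations are still $\sim$-equivalent but not $\sim_{in}$-equivalent after padding by $w, w'$, and (iv) the two "reversed" cocycle values $f(b_i,a_i)$ vanish. Points (ii) and (iii) are in mild tension — padding must preserve $\sim$ but not manufacture an $\sim_{in}$-proof of the equality — and the cleanest way to handle this is to exploit the grading: all relations in $\sim$ are length-preserving, $\sim_{in}$ is defined using strictly shorter relations, and appending a long word $w$ of length larger than $\ell$ means any hypothetical $\sim_{in}$-derivation of $uww' = vww'$ using relations of length $<\mathrm{length}(uww')$ could be "localized" to still need the original relation $u\sim v$ among relations of length $\ge \ell$, contradicting $u\not\sim_{in}v$. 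Making that localization argument precise (essentially a pumping/grading argument on derivation sequences) is the technical heart; everything else is routine given the lemmas already established.
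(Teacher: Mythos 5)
There is a genuine gap, and it sits exactly where you located ``the technical heart'': the claim that one can pad $u$ and $v$ by words $w,w'$ so that $uww'\not\sim_{in}vww'$ is false. By definition, $\sim_{in}$ on words of length $n$ is generated by \emph{all} relations $u'\sim v'$ between words of length $<n$ --- including the relation $u\sim v$ itself once it is embedded in a strictly longer word. So as soon as $ww'$ is nonempty, the single substitution $u\mapsto v$ inside $uww'$ is an $\sim_{in}$-move, hence $uww'\sim_{in}vww'$, hence $\alpha(uww')=\alpha(vww')$ and $f(a_1,b_1)=f(a_2,b_2)$. No pumping or localization argument can rescue this; padding destroys precisely the invariant you need. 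There are secondary problems as well: in the semigroup algebra $F[S]$ a nonzero multilinear identity evaluated on words yields a linear combination over all permutations, not a single word class, so producing single monomials in $P(A)$ is not routine; and the normalization making $f(b_i,a_i)=0$ would have to be done once and for all, while your choice of $u,v,w,w'$ depends on $P$.

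The paper's construction is designed to avoid all of this by replacing $F[S]$ with the algebra $A=\sum_{1\le i<j}e_{ij}(FS)$ of strictly upper triangular matrices over $FS$, with $f(e_{ij}(u),e_{pq}(v))=h(u,v)$ when $j=p$ and $0$ otherwise. This buys four things at once. First, substituting matrix units into a multilinear identity kills every permutation except the identity, so $e_{ij}(S^m)\subseteq P(A)$ whenever $j-i\ge m$; membership in $P(A)$ is absorbed by the matrix positions, and the word $u$ never needs to be padded. One takes $u=u'u''$, $v=v'v''$ with all four factors of length $\ge m$ and sets $a_1=e_{1,1+m}(u')$, $b_1=e_{1+m,1+2m}(u'')$, etc., so that $a_1b_1=e_{1,1+2m}(u)$ equals $a_2b_2=e_{1,1+2m}(v)$ in $A$ (since $u\sim v$), while $f(a_1,b_1)=\alpha(u)\ne\alpha(v)=f(a_2,b_2)$ because $u\not\sim_{in}v$ and $\alpha$ is injective on $\sim_{in}$-classes --- the product word is exactly $u$, with nothing appended. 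Second and third, the strict upper triangularity forces $b_1a_1=b_2a_2=0$, and the index-matching clause in the definition of $f$ forces $f(b_i,a_i)=0$, with no normalization of $\alpha$. Your overall strategy (non--finitely-presented graded semigroup, injective $\alpha$ on $X^*/\!\sim_{in}$, cocycle $h(u,v)=\alpha(uv)$) matches the paper's, but without the matrix-algebra layer the argument does not close.
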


\begin{proof}
Consider the semigroup algebra $FS$ and the algebra $A$ of strictly upper triangular infinite matrices over $FS$,
$$A=\sum_{1\le i <j} e_{ij}(FS),\quad e_{ij}(a)e_{pq}(b)=\delta_{jp} e_{iq}(ab). $$
We define a $2$-cocycle
\[ f(e_{ij}(u), e_{pq}(v))= \begin{cases} 
      h(u,v) & {\rm if}\,\,\,\,  i < j, p < q, j=p; u,v\in S \\
      0 & {\rm otherwise} 
   \end{cases}
\]

Let $P$ be a T-ideal. There exists a nonzero multilinear element
$$x_1\dots x_m + \sum_{1\ne \sigma \in S_m} k_\sigma x_{\sigma(1)}\dots x_{\sigma(m)}\in P,\,\,\,\, k_\sigma\in F.$$
This implies that 
$$e_{ij}(S^m)\subseteq P(A)$$
as soon as $j-i\ge m$. Choose elements $u,v\in X^*$,  $
    {\rm length}(u)={\rm length}(v) \ge 2m$,
$u\sim v$, but $u\not\sim_{in} v$.
Let $u=u'u''$, ${\rm length}(u),{\rm length}(u'') \ge m$, $v=v'v''$, ${\rm length}(v),{\rm length}(v'') \ge m$. The elements
$$a_1 = e_{1,1+m}(u'),\quad
b_1 = e_{1+m,1+2m}(u''),\quad
a_2 = e_{1,1+m}(v'),\quad
b_2 = e_{1+m,1+2m}(v''),$$ lie in $P(A)$, $a_1b_1-a_2b_2 = b_1a_1-b_2a_2=0$ in $A$, $f(b_1,a_1)=f(b_2,a_2)=0$, but 
$$f(a_1,b_1)=h(u',u'')\ne h(v',v'')=f(a_2,b_2).$$
This completes the proof.     
\end{proof}

Let $A$, $f:A\times A\to F$ be the algebra and the $2$-cocycle of Proposition \ref{pe1}. Consider the direct sum $\tilde{A}=A\oplus A^{\rm op}$ and extend the $2$-cocycle on $\tilde{A}:$
$$f(A,A^{\rm op})=f(A^{\rm op},A)=(0),\quad f(x^{\rm op}, y^{\rm op})=-f(y,x)$$
for arbitrary elements $x,y \in A$.
It is straightforward that this extension is a $2$-cocycle on the algebra $\tilde{A}$.

Consider the annihilator extension
$$B=A\oplus A^{\rm op} \oplus Fz.$$

\begin{lemma}
  {\rm (1)} The mapping $\ast:B\to B$, $a+b^{\rm op}+kz\mapsto b + a^{\rm op} -kz$, $k\in F$, is an involution.
  
      {\rm (2)} The mapping $\varphi:A\to H(B,\ast)$, 
        $a\mapsto a + a^{\rm op}$,
        is a Jordan isomorphism.
   
\end{lemma}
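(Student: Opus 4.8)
The plan is to verify the two claimed properties by direct computation in the annihilator extension $B=\tilde A\oplus Fz$, using the properties of the $2$-cocycle $f$ on $\tilde A=A\oplus A^{\rm op}$ that were set up just before the statement.

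For part (1), I would check that $\ast$ is an anti-automorphism of $B$ that squares to the identity. Since $zB=Bz=(0)$ and $z^\ast=-z$, the multiplication of anything with $z$ is sent to $0$ on both sides, so the only content is on $\tilde A$. For $x,y\in A$ one computes
\[
(x\cdot y)^\ast = (xy + f(x,y)z)^\ast = (xy)^{\rm op} - f(x,y)z = y^{\rm op}\cdot x^{\rm op} - f(x,y)z,
\]
while
\[
y^\ast\cdot x^\ast = x^{\rm op}\cdot y^{\rm op} = (yx)^{\rm op} + f(x^{\rm op},y^{\rm op})z = y^{\rm op}x^{\rm op} - f(y,x)z.
\]
Here I use $(yx)^{\rm op}=x^{\rm op}y^{\rm op}$ and the definition $f(x^{\rm op},y^{\rm op})=-f(y,x)$; to match the two lines I need $y^{\rm op}\cdot x^{\rm op}=y^{\rm op}x^{\rm op}+f(y^{\rm op},x^{\rm op})z=y^{\rm op}x^{\rm op}-f(x,y)z$, which is exactly what appears. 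The mixed cases $x\cdot w^{\rm op}$ and $w^{\rm op}\cdot x$ are handled the same way using $f(A,A^{\rm op})=f(A^{\rm op},A)=(0)$. Finally $(a+b^{\rm op}+kz)^{\ast\ast}=(b+a^{\rm op}-kz)^\ast = a + b^{\rm op}+kz$, so $\ast$ is an involution.

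For part (2), the image $\varphi(A)=\{a+a^{\rm op}\mid a\in A\}$ is clearly contained in $H(B,\ast)$ since $(a+a^{\rm op})^\ast=a+a^{\rm op}$, and conversely a symmetric element $a+b^{\rm op}+kz$ satisfies $b=a$ and $k=-k$, hence $k=0$ (char $F\ne 2$), so $H(B,\ast)=\varphi(A)$ and $\varphi$ is a linear bijection onto $H(B,\ast)$. It remains to see $\varphi$ is a Jordan homomorphism: for $a,b\in A$,
\[
\varphi(a)\circ\varphi(b) = (a+a^{\rm op})\cdot(b+b^{\rm op}) + (b+b^{\rm op})\cdot(a+a^{\rm op}).
\]
Expanding, $a\cdot b = ab+f(a,b)z$ and $a^{\rm op}\cdot b^{\rm op}=(ba)^{\rm op}-f(a,b)z$ contribute canceling $z$-terms, and the cross terms $a\cdot b^{\rm op}$, $b^{\rm op}\cdot a$, etc., contribute no $z$-term at all; the $\tilde A$-part is $(ab+ba)+(ab+ba)^{\rm op}=\varphi(ab+ba)=\varphi(a\circ b)$. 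Hence $\varphi(a\circ b)=\varphi(a)\circ\varphi(b)$.

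I expect the computations to be entirely routine; the only place demanding care is bookkeeping of signs in the $z$-component, in particular checking that the $f$-contributions from $a\cdot b$ and from $a^{\rm op}\cdot b^{\rm op}$ cancel when proving $\varphi$ is a Jordan homomorphism, and that they combine correctly (rather than cancel) when verifying $\ast$ is an anti-automorphism. No serious obstacle is anticipated; the lemma is a setup step for the subsequent Proposition \ref{pe2}.
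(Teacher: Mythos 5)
Your overall route is the same as the paper's: a direct componentwise verification in $B=A\oplus A^{\rm op}\oplus Fz$. Part (1) is fine (modulo the slip $y^\ast\cdot x^\ast=x^{\rm op}\cdot y^{\rm op}$ in your first display, which should read $y^{\rm op}\cdot x^{\rm op}$ and which your next sentence in effect corrects), and your explicit identification $H(B,\ast)=\varphi(A)$ in part (2) is a point the paper leaves implicit.

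There is, however, a concrete error in your verification that $\varphi$ is a Jordan homomorphism. You assert that $a^{\rm op}\cdot b^{\rm op}=(ba)^{\rm op}-f(a,b)z$, so that the $z$-terms of $a\cdot b$ and $a^{\rm op}\cdot b^{\rm op}$ cancel within the single product $(a+a^{\rm op})\cdot(b+b^{\rm op})$. But the cocycle is extended by $f(x^{\rm op},y^{\rm op})=-f(y,x)$, so in fact $a^{\rm op}\cdot b^{\rm op}=(ba)^{\rm op}-f(b,a)z$ and
$$\varphi(a)\cdot\varphi(b)=ab+(ba)^{\rm op}+\bigl(f(a,b)-f(b,a)\bigr)z,$$
whose $z$-component does \emph{not} vanish in general. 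Indeed, it must not vanish: the whole point of the construction, exploited in Proposition \ref{pez}, is that $\varphi(a_1)\cdot\varphi(b_1)-\varphi(a_2)\cdot\varphi(b_2)=(f(a_1,b_1)-f(a_2,b_2))z\ne 0$, which would be impossible if the $z$-terms cancelled as you claim. The cancellation occurs only after symmetrizing: the $z$-component of $\varphi(a)\circ\varphi(b)$ is $(f(a,b)-f(b,a))+(f(b,a)-f(a,b))=0$, so your conclusion $\varphi(a\circ b)=\varphi(a)\circ\varphi(b)$ does hold, just not for the reason you give. The paper sidesteps this entirely by checking only $\varphi(a)^2=\varphi(a^2)$, where $f(a,a)+f(a^{\rm op},a^{\rm op})=f(a,a)-f(a,a)=0$ is immediate, and relying on ${\rm char}(F)\ne 2$ to reduce the Jordan condition to squares.
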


\begin{proof}
     We need to check that 
    $$\bigl((a+b^{\rm op})\cdot (c+d^{\rm op})\bigr)^*=(c+d^{\rm op})^*\cdot (a+b^{\rm op})^*.
    $$
    The left-hand side is
 \begin{align*}
 \bigl(ac + (db)^{\rm op} + (f(a,c)-f(d,b))z\bigr)^*  
 =db + (ac)^{\rm op} -(f(a,c)-f(d,b))z, 
 \end{align*}
 and the right-hand side is $$ (d+ c^{\rm op})\cdot (b+ a^{\rm op}) = 
 db+ (ac)^{\rm op} + (f(d,b)-f(a,c))z.$$
 This completes the proof of (1). Let us prove (2). We have 
 $$(a+a^{\rm op})^2 = a^2+(a^2)^{\rm op} + (f(a,a)+f(a^{\rm op}, a^{\rm op}))z=a^2 + (a^2)^{\rm op}.$$
 This completes the proof.
\end{proof}

\begin{proposition}\label{pez}
      \label{pe2} For  an arbitrary {\rm T}-ideal $P$, the restriction of $\varphi$ to $P(A)$ is not standard.
\end{proposition}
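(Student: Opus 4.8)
The plan is to derive a contradiction by assuming that the restriction $\varphi|_{P(A)}$ is standard and then invoking the incompatibility already packaged into Proposition \ref{pe1}. So suppose $\varphi=\varphi_1+\varphi_2$ on $P(A)$, where $\varphi_1:P(A)\to B$ is an (associative) homomorphism, $\varphi_2:P(A)\to B$ is an antihomomorphism, and $\varphi_1(P(A))\varphi_2(P(A))=\varphi_2(P(A))\varphi_1(P(A))=(0)$ in $B$. First I would observe that the annihilator $Fz$ of $B$ and the decomposition $B=A\oplus A^{\rm op}\oplus Fz$ let me write $\varphi_i(x)=\psi_i(x)+z\text{-part}$; since $\varphi(x)=x+x^{\rm op}$ has zero $z$-component, I track how the $z$-components of $\varphi_1,\varphi_2$ must cancel. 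The key point is that the ``honest'' parts of $\varphi_1,\varphi_2$ (projecting away $Fz$) give a decomposition of the identity embedding $P(A)\to A\oplus A^{\rm op}$ into a homomorphism plus an antihomomorphism with the orthogonality relations; because $A\oplus A^{\rm op}$ modulo its center behaves as in Section \ref{sec2}, this forces $\psi_1$ and $\psi_2$ to be (up to the obvious exchange) the two canonical projections-then-inclusions $x\mapsto x$ (into $A$) and $x\mapsto x^{\rm op}$ (into $A^{\rm op}$), possibly swapped, on the commutator part—and in any case they are rigid enough to be computed explicitly on products of two elements of $P(A)$.

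Second, I would exploit this rigidity on a product $a\cdot b$ with $a,b\in P(A)$. Writing out $\varphi(a\cdot b)=\varphi_1(a)\varphi_1(b)+\varphi_2(b)\varphi_2(a)$ (the cross terms vanish by orthogonality) and comparing the $z$-component on both sides, the left side contributes (a multiple of) $f(a,b)$ coming from $\varphi(a\cdot b)=\varphi(ab)+f(a,b)\cdot(\text{stuff that dies})$—more precisely one recomputes using $\varphi(x)=x+x^{\rm op}$ and the extended cocycle on $\tilde A$, whose $z$-output on $a+a^{\rm op},\ b+b^{\rm op}$ is $f(a,b)+f(a^{\rm op},b^{\rm op})=f(a,b)-f(b,a)$. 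On the right side, the $z$-components of $\varphi_1(a)\varphi_1(b)$ and $\varphi_2(b)\varphi_2(a)$ are governed by the cocycle applied to $\psi_1$- and $\psi_2$-images, which by the rigidity are determined by $f$ itself on $a,b$ and by the correction functionals $\lambda_i$ coming from the $z$-parts of $\varphi_i$. Collecting terms, the conclusion is that the restriction $f:P(A)\times P(A)\to F$ must equal a coboundary $\lambda(ab)$ for some linear functional $\lambda$ on $P(A)$ (this is exactly the ``In particular'' clause of Proposition \ref{pe1}). That contradicts Proposition \ref{pe1}, which produces $a_1,a_2,b_1,b_2\in P(A)$ with $a_1b_1=a_2b_2$, $b_1a_1=b_2a_2$ (so $\lambda(a_1b_1)=\lambda(a_2b_2)$ and $f(b_i,a_i)=0$), yet $f(a_1,b_1)\ne f(a_2,b_2)$.

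The bookkeeping I expect to be the main obstacle is \textbf{pinning down $\psi_1,\psi_2$ precisely enough}: Section \ref{sec2}'s standard-form results are stated for unital algebras equal to their commutator ideal (or for $A\oplus A^{\rm op}$), whereas here I only have them on the non-unital subalgebra $P(A)$, so I cannot directly quote Theorem \ref{mt1}. Instead I would argue by hand: the identity map $\iota:P(A)\hookrightarrow\tilde A=A\oplus A^{\rm op}$ is a Jordan homomorphism, and the homomorphism/antihomomorphism pair $(\psi_1,\psi_2)$ with $\psi_1(P(A))\psi_2(P(A))=\psi_2(P(A))\psi_1(P(A))=(0)$ realizing $\iota=\psi_1+\psi_2$ must respect the two-block structure $A,A^{\rm op}$ of $\tilde A$ — composing with the two projections $\tilde A\to A$ and $\tilde A\to A^{\rm op}$ and using that a homomorphism plus an antihomomorphism equal to a fixed embedding, with orthogonal images, leaves essentially no freedom on products. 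Carrying this out requires a short but careful direct computation (analogous to the idempotent argument in the proof of Proposition \ref{lapp}) rather than a black-box citation; once it is in place, the cocycle comparison and the appeal to Proposition \ref{pe1} are routine.
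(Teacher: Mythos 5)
Your overall strategy (assume a standard decomposition on $P(A)$, feed in the elements supplied by Proposition \ref{pe1}, and read off a contradiction from the $z$-component) points in the right direction, but the proof as written has a genuine gap at exactly the step you flag as the main obstacle: the ``rigidity'' claim that $\psi_1,\psi_2$ must essentially be the two canonical maps $x\mapsto x$ and $x\mapsto x^{\rm op}$. Nothing you sketch establishes this. The idempotent argument in the proof of Proposition \ref{lapp} and the results of Section \ref{sec2} all require a unit (and, for Theorem \ref{mt1}, the hypothesis $A=K(A)$), whereas $P(A)$ here is a non-unital ideal of the locally nilpotent algebra of strictly upper triangular matrices; for such algebras a decomposition $\iota=\psi_1+\psi_2$ into a homomorphism plus an antihomomorphism with orthogonal images is not at all rigid, and the ``short but careful direct computation'' you defer is not available. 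Since your coboundary conclusion ($f|_{P(A)}=\lambda(\cdot\,\cdot)$) rests entirely on this rigidity, the argument is incomplete as it stands.

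The gap is, however, avoidable, and the fix shows the rigidity detour is unnecessary --- this is precisely what the paper's proof does. Only the defining identities are needed: orthogonality kills the cross terms, so $\varphi(a)\varphi(b)=\varphi_1(a)\varphi_1(b)+\varphi_2(a)\varphi_2(b)=\varphi_1(ab)+\varphi_2(ba)$, whence $\varphi(a_1)\varphi(b_1)-\varphi(a_2)\varphi(b_2)=\varphi_1(a_1b_1-a_2b_2)+\varphi_2(b_1a_1-b_2a_2)=0$, while the direct computation in $B$ gives $\varphi(a_1)\varphi(b_1)-\varphi(a_2)\varphi(b_2)=(f(a_1,b_1)-f(a_2,b_2))z\ne 0$. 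Equivalently, in your own $z$-component language: with no information about $\psi_1,\psi_2$ one still gets $f(a,b)-f(b,a)=\lambda_1(ab)+\lambda_2(ba)=\lambda_1([a,b])$ (using $\lambda_1+\lambda_2=0$), and since $[a_1,b_1]=[a_2,b_2]$ and $f(b_i,a_i)=0$, this already forces $f(a_1,b_1)=f(a_2,b_2)$, the desired contradiction. Note that what actually follows is not that $f|_{P(A)}$ is a coboundary, but only that its skew part is $\lambda_1\circ[\cdot,\cdot]$ --- which is all that is needed. If you delete the rigidity step and argue as above, your proof collapses to the paper's.
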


\begin{proof}
    By Proposition \ref{pe1}, $P(A)$ contains elements 
    $a_1,b_1,a_2,b_2$ such that
    $a_1b_1-a_2b_2=b_1a_1-b_2a_2=0$, but $f(a_1,b_1)-f(a_2,b_2)\ne 0$.

    Suppose that $\varphi=\varphi_1 + \varphi_2$, where $\varphi_1:P(A)\to B$ is a homomorphism, $\varphi_2:P(A)\to B$ is an antihomomorphism, and 
    $$\varphi_1(P(A))\varphi_2(P(A))
    = \varphi_2(P(A))\varphi_1(P(A))=(0).$$ Then 
    $$\varphi_1(a_1)\varphi_1(b_1)-\varphi_1(a_2)\varphi_1(b_2) = \varphi_1(a_1b_1-a_2b_2)=0$$
    and 
   $$\varphi_2(a_1)\varphi_2(b_1)-\varphi_2(a_2)\varphi_2(b_2) = \varphi_2(b_1a_1-b_2a_2)=0,$$
   hence $$\varphi(a_1)\varphi(b_1)-\varphi(a_2)\varphi(b_2)=0.$$
   However,
  \begin{align*}
&\varphi(a_1)\varphi(b_1)-\varphi(a_2)\varphi(b_2) \\=& (a_1 + a_1^{\rm op})\cdot (b_1 + b_1^{\rm op}) - (a_2 + a_2^{\rm op})\cdot (b_2 + b_2^{\rm op}) \\
      =& a_1b_1 + (b_1a_1)^{\rm op}
      + (f(a_1,b_1)-f(b_1,a_1))z\\&- a_2b_2 - (b_2a_2)^{\rm op} -
      (f(a_2,b_2)-f(b_2,a_2))z\\
      =& (f(a_1,b_1)-f(a_2,b_2))z\ne 0.
  \end{align*}
  This completes the proof.
\end{proof}

\end{document}